\newtheorem{theorem}{Theorem}[section]
\newtheorem{lemma}[theorem]{Lemma}
\newtheorem{prop}[theorem]{Proposition}
\newtheorem{remark}[theorem]{Remark}
\numberwithin{equation}{section}
\newcommand{\R}{\mathbb{R}}
\newcommand{\N}{\mathbb{N}}
\newcommand{\T}{\mathbb{T}}
\newcommand{\func}[3]{#1 : #2 \longrightarrow #3}
\newcommand{\disp}{\displaystyle}
\newcommand{\abs}[1]{\left|#1\right|}
\newcommand{\eps}{\varepsilon}
\newcommand{\norm}[1]{\left\|#1\right\|}
\renewcommand{\leq}{\leqslant}
\renewcommand{\geq}{\geqslant}
\renewcommand{\bar}{\overline}
\renewcommand{\tilde}{\widetilde}
\newcommand{\pa}[1]{\left(#1\right)}
\newcommand{\pab}[1]{\big(#1\big)}
\newcommand{\pabb}[1]{\Big(#1\Big)}
\newcommand{\cro}[1]{\left[#1\right]}
\newcommand{\scalprod}[1]{\left\langle#1\right\rangle}
\newcommand\restr[2]{{
  \left.\kern-\nulldelimiterspace 
  #1 
  \right|_{ #2} 
  }}
\newcommand{\dd}{\mathrm{d}}            
\newcommand{\init}{{\hspace{0.5mm}\mathrm{in}}}
\newcommand{\inits}{{\hspace{0.1mm}\mathrm{in}}}
\newcommand{\ms}{\text{{\tiny MS}}}
\newcommand{\Ker}{\mathrm{ker\hspace{0.5mm}}}
\newcommand{\Span}{\mathrm{Span}}
\newcommand{\VV}{\mathbf{V}_{\tilde{U}}}
\newcommand{\F}{\mathcal{F}}
\newcommand{\boldc}{\mathbf{c}}
\newcommand{\boldu}{\mathbf{u}}
\newcommand{\sobolevx}[1]{H_x^{#1}\big(\bar{\boldc}^{-\frac{1}{2}}\big)}
\def\restriction#1#2{\mathchoice
              {\setbox1\hbox{${\displaystyle #1}_{\scriptstyle #2}$}
              \restrictionaux{#1}{#2}}
              {\setbox1\hbox{${\textstyle #1}_{\scriptstyle #2}$}
              \restrictionaux{#1}{#2}}
              {\setbox1\hbox{${\scriptstyle #1}_{\scriptscriptstyle #2}$}
              \restrictionaux{#1}{#2}}
              {\setbox1\hbox{${\scriptscriptstyle #1}_{\scriptscriptstyle #2}$}
              \restrictionaux{#1}{#2}}}
\def\restrictionaux#1#2{{#1\,\smash{\vrule height .8\ht1 depth .85\dp1}}_{\,#2}}
\def\namedlabel#1#2{\begingroup
    #2%
    \def\@currentlabel{#2}%
    \phantomsection\label{#1}\endgroup
}
\def\signmarc{\bigskip \begin{center} {\sc
Marc Briant\par\vspace{3mm}
Universit\'e de Paris \par 
Laboratoire MAP5, UMR CNRS 8145 \par 
F-75006 Paris, France \par
\vspace{3mm}
e-mail:} \tt{briant.maths@gmail.com} \end{center}}
\def\signandrea{\bigskip \begin{center} { \sc
Andrea Bondesan \par\vspace{3mm}
University of Graz \par \vspace{1mm}
Institute of Mathematics and Scientific Computing \par \vspace{1mm}
8010 Graz, Austria \par
\vspace{3mm}
{\sc e-mail:}} \tt{andrea.bondesan@gmail.com} \end{center}}
\begin{document} 

\title[Perturbative Incompressible Maxwell-Stefan]{Perturbative Cauchy theory for a\\flux-incompressible Maxwell-Stefan system}
\author{Andrea Bondesan and Marc Briant}
\thanks{The authors would like to thank Laurent Boudin and B\'er\'enice Grec for fruitful discussions on the theory of gaseous and fluid mixtures.}

\begin{abstract}
Recently, the authors proved \cite{BonBri} that the Maxwell-Stefan system with an incompressibility-like condition on the total flux can be rigorously derived from the multi-species Boltzmann equation. Similar cross-diffusion models have been widely investigated, but the particular case of a perturbative incompressible setting around a non constant equilibrium state of the mixture (needed in \cite{BonBri}) seems absent of the literature. We thus establish a quantitative perturbative Cauchy theory in Sobolev spaces for it. More precisely, by reducing the analysis of the Maxwell-Stefan system to the study of a quasilinear parabolic equation on the sole concentrations and with the use of a suitable anisotropic norm, we prove global existence and uniqueness of strong solutions and their exponential trend to equilibrium in a perturbative regime around any macroscopic equilibrium state of the mixture. As a by-product, we show that the equimolar diffusion condition naturally appears from this perturbative incompressible setting.
\end{abstract}

\maketitle

\vspace*{10mm}

\noindent \textbf{Keywords:} Fluid mixtures, Incompressible Maxwell-Stefan, Perturbative Cauchy theory.


\tableofcontents

\section{Introduction}

\noindent We consider a chemically non-reacting ideal gaseous mixture composed of $N \geq 2$ different species, having atomic masses $(m_i)_{1\leq i\leq N}$ and evolving in the $3$-dimensional torus $\T^3$. We assume isothermal and isobaric conditions, focusing our attention on a purely diffusive setting. For any $1\leq i\leq N$, the balance of mass links the molar concentration $c_i=c_i(t,x)$ of the $i$-th species to its molar flux $\F_i=\F_i(t,x)$ \textit{via} the continuity equation
\begin{equation}\label{Continuity equation}
\partial_t c_i + \nabla_x\cdot \F_i = 0 \quad \textrm{on }  \R_+\times\T^3.
\end{equation}

Let $c=\sum_i c_i$ denote the total molar concentration of the mixture and set $n_i = c_i/c$, the mole fraction of the $i$-th species. The Maxwell-Stefan equations give relations between the molar fluxes and the mole fractions and read, for any~${1\leq i\leq N}$,
\begin{equation}\label{MS equation}
- c \nabla_x n_i = \sum\limits_{j=1}^N \frac{n_j \F_i - n_i \F_j}{D_{ij}} \quad \textrm{on } \R_+\times\T^3,
\end{equation}
where $D_{ij}=D_{ji}>0$ are the diffusion coefficients between species $i$ and~$j$.

Independently introduced in the 19th century by Maxwell \cite{Max} for dilute gases and Stefan \cite{Ste} for fluids, equations $\eqref{MS equation}$ describe the cross-diffusive interactions inside a mixture and therefore lie in the class of the so-called cross-diffusion models \cite{ShiKawTer, LouNi, LouMar, DesLepMou, Jun}. In particular, the system $\eqref{MS equation}$ gives a generalization \cite{KriWes} of Fick's law of mono-species diffusion \cite{Fic}, making it of core importance for applications in physics and medicine, where it can be used for example to model the propagation of polluting particles in the air or to characterize the gas exchanges in the lower generations of the human lung \cite{TDBCH,Cha,BouGotGre}. Besides, the Maxwell-Stefan equations also raise a great theoretical interest, as their mathematical analysis appears to be very challenging. The difficulties come from the fact that summing over $i$ the relations $\eqref{MS equation}$, we obtain a linear dependence on the mole fractions' gradients which imposes to introduce a further condition in order to close the system and provide a satisfactory Cauchy theory to \eqref{Continuity equation}--\eqref{MS equation}. To our knowledge, the existing mathematical results that deal with the problem of existence and uniqueness of solutions to the sole system \eqref{Continuity equation}--\eqref{MS equation} are all tied up to the assumption that the mixture is subject to a transient equimolar diffusion \cite{KriWes}, namely the total diffusive flux satisfies
\begin{equation}\label{MS closure}
\sum_{i=1}^N \F_i(t,x) = 0, \quad t\geq 0,\ x\in\T^3.
\end{equation} 
Concerning the local Cauchy problem for such multicomponent systems, a first general result \cite{GioMas} was obtained by Giovangigli and Massot for compressible reactive fluids (including viscous and energy equations). They proved local existence and uniqueness of smooth solutions in the whole space, starting from general initial data. Later on, working in a bounded domain~$\Omega$, Bothe exploited classical results \cite{Ama} from the theory of quasilinear parabolic equations in order to show \cite{Bot} local-in-time existence and uniqueness for solutions in~$L^p(\Omega)$, also starting from a general initial datum. Similar outcomes have been obtained through the use of these techniques by Herberg \textit{et al.} in a mass-based (rather than molar-based) setting with chemical reactions \cite{HMPW17} and by Hutridurga and Salvarani in a non-isothermal setting \cite{HutSal2}. The first global existence result was established by Giovangigli in \cite{Gio1}, looking at a perturbative regime where the initial datum is sufficiently close to a constant stationary state of the mixture. He proved global existence, uniqueness and trend to equilibrium in Sobolev spaces on $\R^3$. Boudin \textit{et al.} investigated in \cite{BouGreSal1} the particular case of a 3-species mixture, when two diffusion coefficients are equal: the authors were able to establish global existence and uniqueness of smooth solutions for~$L^\infty(\Omega)$ initial data, as well as their long-time convergence towards the corresponding constant equilibrium state. By passing to entropic variables, J\"ungel and Stelzer obtained \cite{JunSte} global-in-time existence of weak solutions in $H^1(\Omega)$ as well as their exponential decay to the homogeneous steady state of the mixture, for arbitrary diffusion coefficients and for general initial data. At last, we mention that the global existence of weak solutions in $H^1(\Omega)$ has also been shown to hold in more intricate problems where the Maxwell-Stefan system is coupled \cite{CheJun, MarTem} with the incompressible Navier-Stokes equation (which is used to describe the evolution of the mass average velocity of the mixture) or when chemical reactions are taken into account~\cite{MarTem, DauJunTan}. In particular, the entropy method exploited in \cite{CheJun, DauJunTan} also allowed to recover the exponential decay of solutions towards equilibrium.

The present article aims at studying the problem of existence and uniqueness of perturbative solutions to an incompressible variant of the Maxwell-Stefan system \eqref{Continuity equation}--\eqref{MS equation}--\eqref{MS closure}, which is written for any $1\leq i\leq N$ on $\R_+\times\T^3$ in terms of the species' mean velocities $(u_i)_{1\leq i\leq N}$ as
\begin{gather}
\partial_t c_i + \nabla_x \cdot \pa{c_i u_i} = 0, \label{MS mass}
\\[1mm] - \nabla_x c_i = \sum\limits_{j=1}^N c_ic_j\frac{u_i - u_j}{\Delta_{ij}},  \label{MS momentum}
\\[1mm] \nabla_x\cdot \pa{\sum\limits_{i=1}^N c_i u_i} =0, \label{MS incompressibility}
\end{gather}
where the closure relation $\eqref{MS closure}$ is replaced by the incompressibility-like condition $\eqref{MS incompressibility}$. Note in particular that the model \eqref{Continuity equation}--\eqref{MS equation} can be easily recovered \textit{a priori} from \eqref{MS mass}--\eqref{MS momentum}--\eqref{MS incompressibility} by defining $\F_i = c_i u_i$ for any $1\leq i\leq N$ and by supposing that $\sum_i c_i^\init(x)=const.$ on $\T^3$. Indeed, thanks to this hypothesis, the total number number of particles $c$ remains constant over time on $\T^3$, since both $\partial_t c =0$ and $\nabla_x c=0$ are obtained from \eqref{MS mass}--\eqref{MS momentum}--\eqref{MS incompressibility}. The quantities $\Delta_{ij}$ are then linked to the diffusion coefficients through the relations $D_{ij} = \Delta_{ij}/c$.

The above Maxwell-Stefan-type system is of peculiar significance, as recent works \cite{BouGreSal2, HutSal1, BouGrePav} managed to formally derive it starting from the kinetic equations. In particular it is worth mentioning that, even if the equimolar condition $\eqref{MS closure}$ is systematically assumed to hold as being a specific feature which is intrinsic to the physics of diffusion \cite{ChaCow, KriWes}, up to now no asymptotics has been able to recover it from the kinetic level, leaving open the question of its mathematical relevance. In fact, while $\eqref{MS closure}$ obviously implies the incompressibility condition $\eqref{MS incompressibility}$, the contrary is true only in a one space dimension setting. 

In a companion paper \cite{BonBri}, starting from the Boltzmann multi-species equation and supposing to have the proper bounds and regularity on the solutions of \eqref{MS mass}--\eqref{MS momentum}--\eqref{MS incompressibility}, we were able to make the formal asymptotics \cite{BouGreSal2, HutSal1, BouGrePav} rigorous. Providing a Cauchy theory for \eqref{MS mass}--\eqref{MS momentum}--\eqref{MS incompressibility} thus becomes crucial if one wants to deal with such rigorous hydrodynamical derivation and, by this, show the mathematical coherence between the mesoscopic and the macroscopic descriptions.

As usually done in the literature \cite{Gio1,Bot,JunSte}, we begin by introducing the matrix
\begin{equation}\label{MS matrix}
A\pa{\mathbf{c}} = \pa{\frac{c_i c_j}{\Delta_{ij}} - \pa{\sum\limits_{k=1}^{N}\frac{c_i c_k}{\Delta_{ik}}}\delta_{ij}}_{1\leq i,j \leq N},
\end{equation}
which depends in a nonlinear way on the concentrations  $(c_i)_{1\leq i\leq N}$. In this way, the system of equations \eqref{MS mass}--\eqref{MS momentum}--\eqref{MS incompressibility} can be initially rewritten in a more convenient vectorial form as
\begin{gather*}
\partial_t\mathbf{c}+ \nabla_x\cdot\pa{\mathbf{c}\mathbf{u}} = 0,
\\[3mm]     \nabla_x \mathbf{c} = A(\mathbf{c})\mathbf{u},
\\[3mm]    \nabla_x \cdot \langle \mathbf{c}, \mathbf{u} \rangle = 0,
\end{gather*}
where bold letters will denote $N$-vectors referring to the species of the mixture, so that in this case $\mathbf{c} = (c_1,\dots,c_N)$ and $\mathbf{u} =(u_1,\dots,u_N)$, the product $\mathbf{c}\mathbf{u}$ has to be understood componentwise and the notation $\langle \cdot, \cdot \rangle$ indicate the standard Euclidean scalar product in $\R^N$. A natural idea for tackling the problem would then be to invert the gradient relation in order to express $\mathbf{u}$ in terms of $\mathbf{c}$ and obtain an evolution equation for the sole unknown $\mathbf{c}$, by replacing $\mathbf{u} = A(\mathbf{c})^{-1}\nabla_x\mathbf{c}$ into the continuity equation. Unfortunately, it is possible to prove \cite{Gio1, Bot, JunSte} that the matrix $A$ is only negative semi-definite, with $\Ker A = \Span (\mathbf{1})$. 
Therefore, any existing Cauchy theory for the Maxwell-Stefan equations is based on the possibility of explicitly computing the pseudoinverse of $A$, which is defined on the space $\pab{\Span(\mathbf{1})}^\perp$. This can be achieved \cite{Gio2, Gio1, Bot, JunSte, CheJun, HutSal2, DauJunTan} using the Perron-Frobenius theory for quasi-positive matrices. However, a drawback of this strategy is that the computations giving the explicit form of $A^{-1}$ are extremely intricate and do not offer a neat understanding of the action of $A$ on the velocities $\mathbf{u}$. As already pointed out, since one cannot see the part of $\mathbf{u}$ that evolves in $\Ker A$, a closure assumption of type $\eqref{MS closure}$ is needed in order to compensate this lack of informations.

In this work we propose another approach which takes inspiration from the micro-macro decomposition techniques commonly used in the kinetic theory of gases. More precisely, by defining the orthogonal projection $\pi_A$ onto $\Span(\mathbf{1})$, associated to the non-injective operator $A$, we split $\mathbf{u}=\pi_A(\mathbf{u})+\mathbf{U}$ into a part projected onto $\Span(\mathbf{1})$ and an orthogonal part $\mathbf{U}$ which is projected onto $\pab{\Span(\mathbf{1})}^\perp$. Using the incompressibility condition $\eqref{MS incompressibility}$ we construct a new system of equations, equivalent to \eqref{MS mass}--\eqref{MS momentum}--\eqref{MS incompressibility} for full velocities $\mathbf{u}$, in which the Maxwell-Stefan matrix only acts on $\mathbf{U}$
\begin{gather}
\partial_t \mathbf{c} +  \nabla_x\cdot\pa{\mathbf{c}\mathbf{V}_U} +  \bar{u} \cdot \nabla_x \mathbf{c} = 0, \label{MS mass new}
\\[3mm]       \nabla_x\mathbf{c} = A(\mathbf{c})\mathbf{U}, \label{MS momentum new}
\end{gather}
where $\bar{u}: \R_+\times\T^3 \to \R^3$ is a divergence-free function inherited from $\eqref{MS incompressibility}$, the vector~$\mathbf{V}_U$ is linked to the orthogonal component $\mathbf{U}$ \textit{via} the relation $\mathbf{V}_U = \mathbf{U} - \frac{\langle \mathbf{\mathbf{c}}, \mathbf{U}\rangle}{\langle \mathbf{\mathbf{c}},\mathbf{1}\rangle}\mathbf{1}$ and the full velocity $\mathbf{u}$ is finally reconstructed as~$\mathbf{u} = \mathbf{\bar{u}} +  \mathbf{V}_U$. Note that the above reformulation is very similar to the system investigated by Chen and J\"ungel, where the role played here by $\bar{u}$ is the same as the one played by the mass average velocity, solution to the incompressible Navier-Stokes equation in \cite{CheJun}. Indeed, quite surprisingly, it turns out that the kinetic decomposition naturally leads to the standard splitting between convection and diffusion velocities \cite{ChaCow, KriWes, CheJun}, as we shall see that $\bar{u} = \frac{\langle \mathbf{\mathbf{c}}, \mathbf{u}\rangle}{\langle \mathbf{\mathbf{c}},\mathbf{1}\rangle}$ actually coincides with the molar average velocity of the mixture while the vector $\mathbf{V}_U$ satisfies the relation $\langle \mathbf{c}, \mathbf{V}_U \rangle = 0$, equivalent to $\eqref{MS closure}$. In particular, we wish to point out that the equimolar diffusion condition we recover is an intrinsic property of the model, which arises mathematically.

Since $\mathbf{U} \in \pab{\Span(\mathbf{1})}^\perp$ in \eqref{MS mass new}--\eqref{MS momentum new}, the pseudoinverse of $A$ is now well-defined. However, as opposed to the entropy method of \cite{CheJun}, we make use here of a hypocoercive strategy which exploits the properties of $A$ without the need of computing its explicit structure. Moreover, instead of eliminating one last species \cite{JunSte, CheJun}, a symmetric role is given to all the species' variables, as in \cite{MarTem}. Our approach also provides an original point of view which exhibits a clear separation between $\pi_A(\mathbf{u})$ and $\mathbf{U}$, allowing to show explicitly the actual action of $A$ on the sole vector $\mathbf{U}$. We shall thus prove that the orthogonal reformulation \eqref{MS mass new}--\eqref{MS momentum new} in terms of the couple $(\mathbf{c},\mathbf{U})$ is fully closed and has a quasilinear parabolic structure. With the use of a suitable Sobolev anisotropic norm we shall subsequently establish a negative feedback coming from the Maxwell-Stefan operator $\eqref{MS matrix}$. This fact will allow to derive the \textit{a priori} energy estimates leading to global-in-time existence and uniqueness (in a perturbative sense) of strong solutions $(\mathbf{c},\mathbf{U})$ to  \eqref{MS mass new}--\eqref{MS momentum new} and, eventually, the same result will hold for the couple $(\mathbf{c},\mathbf{u})$ with full velocities, solution of the original system \eqref{MS mass}--\eqref{MS momentum}--\eqref{MS incompressibility}. 


\smallskip
In the next section we present all the notations and we state our main theorem. Section \ref{sec:MS matrix} is then dedicated to the investigation of the fundamental properties (spectral gap and some Sobolev estimates) of the Maxwell-Stefan matrix $A(\mathbf{c})$. At last, in Section \ref{sec:perturbed IMS} we shall prove our main result.

\section{Main result}\label{sec:Main results}

\subsection{Notations and conventions}
Let us first introduce the main notations that we use throughout the paper. Vectors and vector-valued operators in $\R^N$ will be denoted by a bold symbol, whereas their components will be denoted by the same indexed symbol. For instance, $\mathbf{w}$ represents the vector or vector-valued operator $(w_1,\dots,w_N)$. In particular, we shall use the symbol $\mathbf{1}$ to name the specific vector $(1,\dots,1)$. Henceforth, the multiplication of $N$-vectors has to be understood in a component by component way, so that for any $\mathbf{w}, \mathbf{W}\in\R^N$ and any $q\in\mathbb{Q}$ we have
\begin{equation*}
\mathbf{w}\mathbf{W} = (w_i W_i)_{1\leq i\leq N},\qquad \mathbf{w}^q=(w_i^q)_{1\leq i\leq N}.
\end{equation*}
Moreover, we introduce the Euclidean scalar product in $\R^N$ weighted by a vector $\mathbf{w}\in (\R^*_+)^N$, which is defined as
$$\langle \mathbf{c},\mathbf{d}\rangle_{\mathbf{w}}= \sum\limits_{i=1}^N c_i d_i w_i,$$
and induces the norm $\norm{\boldc}_{\mathbf{w}}^2=\scalprod{\boldc,\boldc}_{\mathbf{w}}$. When $\mathbf{w}=\mathbf{1}$, the index $\mathbf{1}$ will be dropped in both the notations for the scalar product and the norm.

The convention we choose for the functional spaces is to index the space by the name of the concerned variable. For $p$ in $[1,+\infty]$ we have
$$L^p_{t} = L^p (0,+\infty),\qquad L^p_x = L^p\left(\T^3\right), \qquad L^p_{t,x} = L^p\left(\R^+\times\T^3\right).$$
To any positive measurable function $\func{\mathbf{w}}{\T^3}{\pab{\R^*_+}^N}$ in the variable $x$, we associate the weighted Hilbert space $L^2(\T^3,\mathbf{w})$, which is defined by the scalar product and norm
\begin{gather*}
\scalprod{\boldc,\mathbf{d}}_{L^2_x(\mathbf{w})} = \sum_{i=1}^N\langle c_i,d_i\rangle_{L_x^2(w_i)}=\sum\limits_{i=1}^N \int_{\T^3} c_i d_i w_i^2 \dd x,
\\[3mm]     \|\boldc\|_{L_x^2(\mathbf{w})}^2=\sum_{i=1}^N\| c_i\|^2_{L_x^2(w_i)} = \sum\limits_{i=1}^N \int_{\T^3} c_i^2 w_i^2 \dd x.
\end{gather*}
Finally, in the same way we can introduce the corresponding weighted Sobolev spaces. Consider a multi-index $\alpha\in\N^3$ of length $|\alpha|=\sum_{k=1}^3 \alpha_k$. For any $s\in\N$ and any vector-valued function $\boldc\in H^s(\T^3,\mathbf{w})$, we define the norm
\begin{equation*}
\norm{\boldc}_{H^s_x(\mathbf{w})} = \left(\sum\limits_{i=1}^N \sum\limits_{|\alpha|\leq s}\norm{\partial_x^\alpha c_i}^2_{L^2_{x}(w_i)}\right)^{1/2}.
 \end{equation*}


\subsection{Main theorem}

We build up a Cauchy theory for the incompressible Maxwell-Stefan system \eqref{MS mass}--\eqref{MS momentum}--\eqref{MS incompressibility} perturbed around any macroscopic equilibrium state of the form $(\bar{\boldc},\bar{\boldu})$, where $\bar{\boldc}\in\pab{\R_+^*}^N$ is a positive constant $N$-vector and $\mathbf{\bar{u}} = (\bar{u},\dots,\bar{u})$ is such that the velocity vector $\bar{u}:\R_+\times\T^3\to\R^3$ is common to all the species and satisfies $\nabla_x\cdot \bar{u} = 0$. We thus look at solutions of type $(\mathbf{c},\mathbf{u}) = (\bar{\mathbf{c}}+\eps \tilde{\mathbf{c}}, \bar{\mathbf{u}}+\eps\tilde{\mathbf{u}})$, with $\eps\in (0,1]$ being the small parameter of the perturbation. The following theorem gathers the main properties that we are able to prove.

\smallskip
\begin{theorem}\label{theo:Cauchy MS}
Let $s> 3$ be an integer, $\func{\bar{u}}{\R_+\times\T^3}{\R^3}$ be in $L^\infty\pab{\R_+; H^{s}(\T^3)}$ with~${\nabla_x\cdot\bar{u}=0}$, and consider $\mathbf{\bar{c}} > 0$. There exist $\delta_\ms$, $C_\ms$, $C'_\ms$, $\lambda_\ms >0$ such that for all~${\eps\in (0,1]}$ and for any initial datum $(\tilde{\mathbf{c}}^\init, \tilde{\mathbf{u}}^\init)\in H^s(\T^3)\times H^{s-1}(\T^3)$ satisfying, for almost any $x\in\T^3$ and for any $1\leq i\leq N$,
\begin{itemize}
\item[(i)]\textbf{Mass compatibility: } $\disp{\sum_{i=1}^N \tilde{c}^\init_i(x) = 0 \quad\mbox{and}\quad \int_{\T^3}\tilde{c}_i^\init(x) \dd x = 0}$, \\[-0.5mm]
\item[(ii)]\textbf{Mass positivity: } $\disp{\bar{c}_i +\eps \tilde{c}_i^\init (x) > 0}$, \\[-1.5mm]
\item[(iii)]\textbf{Moment compatibility: } $\disp{\nabla_x \tilde{c}^\init_i = \sum_{j\neq i}\frac{\tilde{c}_i^\init \tilde{c}_j^\init}{\Delta_{ij}}\pa{\tilde{u}_j^\init - \tilde{u}_i^\init}}$,\\[-0.5mm]
\item[(iv)] \textbf{Smallness assumptions: } $\disp{\norm{\tilde{\mathbf{c}}^\init}_ {H^s_x}\leq \delta_\ms}\quad$ and $\quad\disp{\norm{\bar{u}}_{L^\infty_t H^{s}_x}\leq \delta_\ms}$,
\end{itemize}
there exists a unique weak solution 
$$\pa{\mathbf{c},\mathbf{u}} = \pab{\bar{\mathbf{c}}+\eps \tilde{\mathbf{c}}, \bar{\mathbf{u}}+\eps\tilde{\mathbf{u}}}$$
in $L^\infty\pab{\R_+;H^s(\T^3)} \times L^\infty\pab{\R_+;H^{s-1}(\T^3)}$ to the incompressible Maxwell-Stefan system \eqref{MS mass}--\eqref{MS momentum}--\eqref{MS incompressibility}, such that initially~${\restriction{\pa{\tilde{\mathbf{c}},\tilde{\mathbf{u}}}}{t=0} = \pa{\tilde{\mathbf{c}}^\init,\tilde{\mathbf{u}}^\init}}$ a.e. on~$\T^3$.

Moreover, $\mathbf{c}$ is positive and the following equimolar diffusion-like relation holds a.e. on~${\R_+\times\T^3}$:
\begin{equation}\label{equimolar vectorial}
\scalprod{\mathbf{c},\tilde{\mathbf{u}}}=\sum_{i=1}^N c_i(t,x) \tilde{u}_i(t,x)=0.
\end{equation}
Finally, for almost any time $t\geq 0$
\begin{eqnarray*}
\norm{\tilde{\mathbf{c}}}_{\sobolevx{s}} &\leq&   e^{- t \lambda_\ms}\norm{\tilde{\mathbf{c}}^\init}_{\sobolevx{s}},
\\[4mm]    \norm{\tilde{\mathbf{u}}}_{H^{s-1}_x} &\leq&  C_\ms e^{- t \lambda_\ms}\norm{\tilde{\mathbf{c}}^\init}_{\sobolevx{s}},
\\[3mm]    \int_0^t e^{2 (t-\tau) \lambda_\ms}\norm{\tilde{\mathbf{u}}(\tau)}^2_{H^{s}_x}\dd \tau &\leq & C'_\ms \norm{\tilde{\mathbf{c}}^\init}^2_{\sobolevx{s}}.
\end{eqnarray*}
The constants $\delta_\ms$, $\lambda_\ms$, $C_\ms$ and $C'_\ms$ are constructive and only depend on $s$, the number of species $N$, the diffusion coefficients $(\Delta_{ij})_{1\leq i,j\leq N}$ and the constant vector~$\mathbf{\bar{c}}$. In particular, they are independent of the parameter $\eps$.
\end{theorem}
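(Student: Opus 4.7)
The plan is to exploit the orthogonal reformulation \eqref{MS mass new}--\eqref{MS momentum new}, in which $\bar{u}\mathbf{1}$ is fixed by the incompressibility constraint and only the orthogonal component $\mathbf{U}\in\pab{\Span(\mathbf{1})}^\perp$ is driven by the Maxwell--Stefan operator. Since the pseudoinverse $A(\mathbf{c})^{+}$ is well-defined on $\pab{\Span(\mathbf{1})}^\perp$ with a uniform spectral gap as long as $\mathbf{c}$ remains in a small neighbourhood of $\bar{\mathbf{c}}$ (the content of Section \ref{sec:MS matrix}), equation \eqref{MS momentum new} can be solved algebraically as $\mathbf{U} = A(\mathbf{c})^{+}\nabla_x\mathbf{c}$ and substituted back into \eqref{MS mass new}. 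This produces a closed quasilinear parabolic system for the sole unknown $\mathbf{c}$, to which a perturbative analysis with $\mathbf{c}=\bar{\mathbf{c}}+\eps\tilde{\mathbf{c}}$ can be applied.

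I would then construct solutions through a Picard iteration $(\tilde{\mathbf{c}}^{(n)})_{n\in\N}$, where $\tilde{\mathbf{c}}^{(n+1)}$ solves the linear parabolic problem obtained by freezing the matrix at $A(\bar{\mathbf{c}}+\eps\tilde{\mathbf{c}}^{(n)})$, with $\tilde{\mathbf{U}}^{(n+1)}$ computed \textit{a posteriori} from the pseudoinverse formula. Existence of each iterate in $L^\infty_t\sobolevx{s}\cap L^2_t H^s_x$ follows from standard linear parabolic theory on $\T^3$, so the core task is to show that the sequence stays in a small ball of this space and is Cauchy in a weaker norm, uniformly in $\eps\in(0,1]$.

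The crucial ingredient is the $H^s$ energy estimate. Applying $\partial_x^\alpha$ with $|\alpha|\leq s$ to \eqref{MS mass new}, pairing with $\partial_x^\alpha\tilde{\mathbf{c}}$ in the anisotropic weight $\bar{\mathbf{c}}^{-1/2}$ (a choice forced by the requirement that the linearization of $A$ around $\bar{\mathbf{c}}$ be symmetric in the induced inner product), and using \eqref{MS momentum new} to convert one factor of $\nabla_x\tilde{\mathbf{c}}$ into $A(\mathbf{c})\tilde{\mathbf{U}}$, one arrives at an identity of the form
\begin{equation*}
\tfrac{1}{2}\tfrac{d}{dt}\norm{\tilde{\mathbf{c}}}^2_{\sobolevx{s}} + \scalprodb{-A(\mathbf{c})\tilde{\mathbf{U}},\tilde{\mathbf{U}}}_{H^s_x} = R,
\end{equation*}
where $R$ gathers (i) the convective contribution $\bar{u}\cdot\nabla_x\tilde{\mathbf{c}}$, whose top-order part integrates to zero thanks to $\nabla_x\cdot\bar{u}=0$, (ii) commutators such as $[\partial_x^\alpha,A(\mathbf{c})^{+}\cdot]$ and $[\partial_x^\alpha,\bar{u}\cdot\nabla_x]$, and (iii) the genuinely nonlinear quadratic contributions. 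All of these can be bounded by $\delta_\ms\pab{\norm{\tilde{\mathbf{c}}}^2_{\sobolevx{s}}+\norm{\tilde{\mathbf{U}}}^2_{H^s_x}}$ thanks to the algebra structure of $H^s_x$ for $s>3/2$. Combining the spectral gap of $-A$ proved in Section \ref{sec:MS matrix} with the Poincar\'e--Wirtinger inequality (available because the zero-mean property from (i) is preserved by the evolution) then yields a coercive control $\norm{\tilde{\mathbf{c}}}^2_{\sobolevx{s}}+\norm{\tilde{\mathbf{U}}}^2_{H^s_x}\lesssim -\scalprod{A(\mathbf{c})\tilde{\mathbf{U}},\tilde{\mathbf{U}}}_{H^s_x}$; picking $\delta_\ms$ small enough to absorb $R$ into the dissipation and applying Gr\"onwall delivers the three exponential bounds of the theorem.

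Passing to the limit in the iteration scheme (after proving contraction in the lower-order norm $L^\infty_t H^{s-1}_x$) produces a unique strong solution $(\tilde{\mathbf{c}},\tilde{\mathbf{U}})$. The full velocity is then reconstructed as $\mathbf{u}=\bar{\mathbf{u}}+\mathbf{V}_U$, and both the equimolar relation \eqref{equimolar vectorial} and \eqref{MS incompressibility} hold by the very definition of $\mathbf{V}_U$; positivity of $\mathbf{c}$ follows from the Sobolev embedding $\sobolevx{s}\hookrightarrow L^\infty_x$ together with the smallness of $\eps\tilde{\mathbf{c}}$ and the strict positivity of $\bar{\mathbf{c}}$. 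The hardest part, I expect, will be the treatment of the drift $\bar{u}\cdot\nabla_x\tilde{\mathbf{c}}$: it is not itself of perturbative size, becomes harmless only after an integration by parts using $\nabla_x\cdot\bar{u}=0$, and its high-order commutators must be bounded in terms of $\norm{\bar{u}}_{H^s_x}$ alone — which is precisely why the smallness assumption (iv) is imposed on $\norm{\bar{u}}_{L^\infty_t H^s_x}$ and not merely on the perturbation of the concentrations.
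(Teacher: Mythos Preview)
Your proposal is correct and mirrors the paper's approach: orthogonal reformulation, closure as a quasilinear parabolic system on $\tilde{\mathbf{c}}$ via $\tilde{\mathbf{U}}=A(\mathbf{c})^{-1}\nabla_x\tilde{\mathbf{c}}$, the $\bar{\mathbf{c}}^{-1/2}$-weighted $H^s$ energy estimate combining the spectral gap of $A$ with Poincar\'e (available because the zero-mean property is preserved), a Picard iteration, and the final reconstruction of $\mathbf{u}$ from $\mathbf{V}_U$. The only minor technical deviations are that the paper extracts the limit of the iterates by compactness (Aubin--Lions) rather than by proving contraction in a weaker norm, and it handles the drift $\bar{u}\cdot\nabla_x\tilde{\mathbf{c}}$ not via the divergence-free top-order cancellation you describe but by bounding the full term directly after converting $\nabla_x\partial^\alpha_x\tilde{\mathbf{c}}$ back into $\tilde{\mathbf{U}}$ through the gradient relation \eqref{perturbed MS momentum}.
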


\smallskip
\begin{remark}
Let us make a few comments about the above theorem.
\begin{enumerate}
\item Our analysis is actually independent of the parameter $\eps$ and we shall systematically bound it by $1$ in the estimates. However, we have decided to keep it because it recalls the perturbative regime (depending on the Knudsen number $\eps$) which is required at the kinetic level to rigorously derive \cite{BonBri} the Maxwell-Stefan system studied here.
\item The ``mass compatibility'' and the ``moment compatibility'' assumptions are not closure hypotheses, they actually exactly come from the system of equations \eqref{MS mass}--\eqref{MS momentum} applied at time $t=0$. We impose these conditions at the beginning, so that our initial datum is compatible with the Maxwell-Stefan system.
\item We emphasize that we only prove uniqueness in the perturbative setting under consideration.
\item The solution we construct has actually more regularity with respect to $t$, provided that $\bar{u}\in C^0\pab{\R_+;H^{s}(\T^3)}$. Indeed, in this case the couple $(\mathbf{c},\mathbf{u})$ also belongs to $C^0\pab{\R_+;H^{s-1}(\T^3)}\times C^0\pab{\R_+;H^{s-2}(\T^3)}$ with at least $\partial_t \tilde{\mathbf{c}} \in C^0\pab{\R_+; H^1(\T^3)}$. This allows in particular to properly define the initial value problem and give strong solutions. Going further, in a more general setting \cite{CheJun} the convection velocity $\bar{\mathbf{u}}$ is solution of an incompressible Navier-Stokes system with $\bar{u} \in C^0(\R_+; H^s(\T^3)) \cap C^1(\R_+; H^{s-2}(\T^3)) \cap L^2(\R_+; H^{s+1}(\T^3))$. We believe that in this case similar arguments used to prove the regularity of solutions to the Navier-Stokes equation \cite{MajBer} could be adapted to our problem and lead to recover the expected regularity for quasilinear parabolic systems \cite{Gio1}, namely $\tilde{\mathbf{c}} \in C^0(\R_+; H^s(\T^3)) \cap C^1(\R_+; H^{s-2}(\T^3)) \cap L^2(\R_+; H^{s+1}(\T^3))$ and $\tilde{\mathbf{u}} \in C^0(\R_+; H^{s-1}(\T^3)) \cap L^2(\R_+; H^s(\T^3))$.
\item The constants $\delta_{\mbox{\tiny{MS}}}$, $\lambda_{\mbox{\tiny{MS}}}$ and $C_{\mbox{\tiny{MS}}}$ are not explicitly computed, but their values can be determined respectively from formulae $\eqref{deltas}$, $\eqref{lambdas}$ and $\eqref{Cs}$. Note that the smallness $\delta_{\mbox{\tiny{MS}}}$ essentially depends on $\min\bar{c}_i$ so that once the constant state $\mathbf{\bar{c}}$ is chosen, it subjugates $\bar{u}$.
\item We emphasize that the exponential decay is due in part to the dynamics of the Maxwell-Stefan system and in part because Sobolev and Poincar\'e inequalities hold on the torus as it is a compact $C^1$-manifold.
\item At last, the positivity of the solution stems from the perturbative setting and does not follow a general weak minimum principle which can fail for cross-diffusion systems. It however seems that for the specific Maxwell-Stefan system under consideration, such a result could hold even in a non-perturbative setting \cite[Lemma 7.3.4]{Gio1}.
\end{enumerate}
\end{remark}
\bigskip

\section{Properties of the Maxwell-Stefan matrix}\label{sec:MS matrix}

\noindent We prove some properties of the Maxwell-Stefan matrix $A$, as well as some estimates on its derivatives. We conclude with properties and estimates on the pseudo-inverse of $A$ on its image. 

\smallskip
\begin{prop}\label{prop:spectral gap MS matrix}
For any $\mathbf{c} \geq 0$ the matrix $A(\mathbf{c})$ is nonpositive, in the sense that there exist two positive constants $\lambda_A$ and $\mu_A$ such that, for any $\mathbf{X}\in\R^N$,
\begin{gather*}
\norm{A(\mathbf{c})\mathbf{X}} \leq \mu_A \langle \mathbf{c},\mathbf{1}\rangle^2\norm{\mathbf{X}},
\\[3mm]     \langle \mathbf{X} , A(\mathbf{c})\mathbf{X} \rangle \leq - \lambda_A\pa{\min\limits_{1\leq i \leq N} c_i}^2\Big[\norm{\mathbf{X}}^2 -\langle\mathbf{X},\mathbf{1}\rangle^2\Big] \leq 0.
\end{gather*}
\end{prop}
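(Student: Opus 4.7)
The argument rests on a symmetrization that exploits the symmetry $\Delta_{ij}=\Delta_{ji}$, after which both inequalities reduce to elementary estimates on $\R^N$. I would first rewrite the entries of $A(\boldc)$ in a more convenient form. Since $A_{ii}(\boldc)=-\sum_{k\neq i}c_i c_k/\Delta_{ik}$ and $A_{ij}(\boldc)=c_ic_j/\Delta_{ij}$ for $i\neq j$, a direct computation yields
\[
\pab{A(\boldc)\mathbf{X}}_i = \sum_{j=1}^N \frac{c_i c_j}{\Delta_{ij}}\pa{X_j-X_i}, \qquad 1\leq i\leq N.
\]
From this explicit form, the operator norm bound follows by Cauchy-Schwarz. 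Setting $\Delta_{\min}=\min_{i,j}\Delta_{ij}$, each component is controlled by $\Delta_{\min}^{-1}c_i\scalprod{\boldc,\mathbf{1}}\pa{\abs{X_i}+\norm{\mathbf{X}}}$; squaring, summing over $i$ and using $\sum_i c_i^2\leq\scalprod{\boldc,\mathbf{1}}^2$ extracts the global factor $\scalprod{\boldc,\mathbf{1}}^2\norm{\mathbf{X}}^2$ with a constant $\mu_A$ that depends only on $N$ and $\Delta_{\min}$.

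The lower bound is the heart of the proposition. I would compute $\scalprod{\mathbf{X},A(\boldc)\mathbf{X}}=\sum_{i,j}\frac{c_ic_j}{\Delta_{ij}}X_i(X_j-X_i)$, exchange the summation indices using $\Delta_{ij}=\Delta_{ji}$, and average the two expressions. The cross-term $-X_i^2+2X_iX_j-X_j^2=-(X_i-X_j)^2$ then appears naturally, giving
\[
\scalprod{\mathbf{X},A(\boldc)\mathbf{X}}=-\frac{1}{2}\sum_{i,j=1}^N\frac{c_i c_j}{\Delta_{ij}}\pa{X_i-X_j}^2,
\]
which makes the nonpositivity immediate and identifies $\Ker A=\Span(\mathbf{1})$ at a glance. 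The weights are uniformly bounded below by $c_ic_j/\Delta_{ij}\geq \pa{\min_i c_i}^2/\Delta_{\max}$ with $\Delta_{\max}=\max_{i,j}\Delta_{ij}$, reducing the problem to estimating the purely combinatorial quadratic form $\sum_{i,j}(X_i-X_j)^2$ from below.

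To close the estimate I would invoke the algebraic identity
\[
\sum_{i,j=1}^N\pa{X_i-X_j}^2 = 2N\norm{\mathbf{X}}^2-2\scalprod{\mathbf{X},\mathbf{1}}^2,
\]
obtained by expanding the square and using $\norm{\mathbf{1}}^2=N$. Combined with the previous step, this yields the claimed Poincaré-type bound with $\lambda_A$ of order $N/\Delta_{\max}$ (the exact numerical prefactor depending on the normalization of the bracket $\norm{\mathbf{X}}^2-\scalprod{\mathbf{X},\mathbf{1}}^2$, which is nothing but the squared distance of $\mathbf{X}$ from $\Span(\mathbf{1})$ up to the factor $N$ coming from $\norm{\mathbf{1}}^2$). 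There is no serious obstacle here; the only subtlety is the bookkeeping around the diagonal $i=j$ in the double sum, but those terms vanish both before and after symmetrization and play no role. The overall structure of the argument is classical for graph Laplacians, and the Maxwell-Stefan matrix $A(\boldc)$ is precisely such a (weighted) Laplacian on the complete graph on $N$ vertices.
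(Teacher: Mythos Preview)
Your proposal is correct and essentially identical to the paper's proof: the same componentwise expression $(A(\boldc)\mathbf{X})_i=\sum_j c_ic_j\Delta_{ij}^{-1}(X_j-X_i)$, the same symmetrization yielding $-\tfrac12\sum_{i,j}\frac{c_ic_j}{\Delta_{ij}}(X_i-X_j)^2$, and the same lower bound on the weights via $(\min_i c_i)^2/\Delta_{\max}$. The only cosmetic differences are that the paper handles the upper bound in the sup norm and then invokes norm equivalence (whereas you work directly in the Euclidean norm), and that the paper stops at $\sum_{i,j}(X_i-X_j)^2$ with ``conclude the proof'' while you spell out the identity $\sum_{i,j}(X_i-X_j)^2=2N\norm{\mathbf{X}}^2-2\scalprod{\mathbf{X},\mathbf{1}}^2$; your parenthetical remark about the factor $N$ in the normalization of the bracket is well taken.
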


\smallskip
\begin{proof}[Proof of Proposition \ref{prop:spectral gap MS matrix}]
Let us consider two $N$-vectors, $\mathbf{c} \geq 0$ and $\mathbf{X}$. The boundedness of $A(\mathbf{c})$ can be showed in the supremum norm, since all norms are equivalent in $\R^N$. It is straightforward that, for any $1\leq i\leq N,$
$$\abs{\sum\limits_{j=1}^N\frac{c_ic_j}{\Delta_{ij}}\pa{X_j-X_i}} \leq \frac{2\max\limits_{1\leq i \leq N} c_i}{\min\limits_{1\leq i,j \leq N}\Delta_{ij}} \pa{\sum\limits_{j=1}^N c_j}\max\limits_{1\leq j\leq N}\abs{X_j},$$
which raises the first inequality, since $\max\limits_{1\leq i\leq N} c_i \leq \sum_{j=1}^N c_j$ and we can thus choose $\mu_A = 2 \sqrt{N} / \min\limits_{1\leq i,j\leq N} \Delta_{ij}$ using the equivalence between the supremum norm and the standard Euclidean norm. 

\smallskip
We then compute
$$\langle \mathbf{X}, A(\mathbf{c})\mathbf{X} \rangle = - \sum\limits_{i=1}^N\sum\limits_{j=1}^N\frac{c_ic_j}{\Delta_{ij}}(X_i-X_j)X_i = - \frac{1}{2}\sum\limits_{i=1}^N\sum\limits_{j=1}^N\frac{c_ic_j}{\Delta_{ij}}(X_i-X_j)^2 \leq 0.$$
Note in particular that, in the case $c_i > 0$ and $\Delta_{ij}>0$ for any $1\leq i,j\leq N$, the relation $A(\mathbf{c})\mathbf{X} = 0$ implies $X_i = X_j$ for all $i$ and $j$, and so $\Ker A = \Span\pa{\mathbf{1}}$. If we now set $\lambda_A = 1/\max\limits_{1\leq i,j\leq N} \Delta_{ij}$, we can deduce the bound
$$\langle \mathbf{X}, A(\mathbf{c})\mathbf{X} \rangle \leq - \frac{\lambda_A}{2}\pa{\min\limits_{1\leq i \leq N} c_i}^2\sum\limits_{i=1}^N\sum\limits_{j=1}^N(X_i-X_j)^2,$$
and conclude the proof.
\end{proof}

As we shall need controls in Sobolev spaces, we then give below some estimates on the $x$-derivatives of the Maxwell-Stefan matrix.

\begin{prop}\label{prop:estimate derivative MS matrix}
Consider a multi-index $\alpha\in\N^3$ and let $\mathbf{c},\mathbf{U}\in H^{|\alpha|}(\T^3)$, with $\mathbf{c}\geq 0$. Then, for any $\mathbf{X}\in\R^N$,
\begin{equation*}
\begin{split}
\langle \partial^{\alpha}_x \cro{A(\mathbf{c})\mathbf{U}},\mathbf{X}\rangle \leq &\ \langle A(\mathbf{c}) \partial^{\alpha}_x\mathbf{U},\mathbf{X}\rangle  + 2 N^2 \mu_A \scalprod{\mathbf{c},\mathbf{1}}\norm{\mathbf{X}}  3^{\abs{\alpha}} \sum\limits_{\underset{|\alpha_1|\geq 1}{\alpha_1+\alpha_3 = \alpha}}\norm{\partial^{\alpha_1}_x\mathbf{c}}\norm{\partial^{\alpha_3}_x \mathbf{U}}
\\[1mm]    &\ + N^2\mu_A \norm{\mathbf{X}} 3^{\abs{\alpha}} \sum\limits_{\underset{|\alpha_1|, |\alpha_2| \geq 1}{\alpha_1+\alpha_2+\alpha_3 = \alpha}}\norm{\partial^{\alpha_1}_x \mathbf{c}}\norm{ \partial^{\alpha_2}_x \mathbf{c}}\norm{\partial^{\alpha_3}_x \mathbf{U}},
\end{split}
\end{equation*}
where $\mu_A$ is defined in Proposition \ref{prop:spectral gap MS matrix}.
\end{prop}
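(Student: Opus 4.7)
The plan is to exploit the explicit trilinear structure of the map $(\mathbf{c},\mathbf{c},\mathbf{U})\mapsto A(\mathbf{c})\mathbf{U}$ and then distribute the $\alpha$ derivatives via the multi-index Leibniz rule. From definition \eqref{MS matrix}, the $i$-th component reads
\begin{equation*}
[A(\mathbf{c})\mathbf{U}]_i = \sum_{j=1}^N \frac{c_ic_j}{\Delta_{ij}}\pab{U_j - U_i},
\end{equation*}
so for each fixed pair $(i,j)$ the Leibniz formula gives
\begin{equation*}
\partial^{\alpha}_x\pab{c_ic_j(U_j-U_i)} = \sum_{\alpha_1+\alpha_2+\alpha_3=\alpha}\binom{\alpha}{\alpha_1,\alpha_2,\alpha_3}\partial^{\alpha_1}_x c_i\,\partial^{\alpha_2}_x c_j\,\partial^{\alpha_3}_x(U_j-U_i),
\end{equation*}
each multinomial coefficient being controlled by $3^{\abs{\alpha}}$.

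I then split this sum into three groups according to where the derivatives fall. The contribution $\alpha_1=\alpha_2=0$, $\alpha_3=\alpha$ reassembles exactly $[A(\mathbf{c})\partial^{\alpha}_x\mathbf{U}]_i$, which after pairing with $\mathbf{X}$ yields the leading term in the statement; I keep it signed rather than taking absolute values so that the dissipative feedback of Proposition \ref{prop:spectral gap MS matrix} remains available in later energy estimates. The contributions with exactly one of $\abs{\alpha_1},\abs{\alpha_2}$ equal to zero form the first remainder: the commutativity $c_ic_j=c_jc_i$ identifies the sub-case $(\alpha_1=0,\abs{\alpha_2}\geq 1)$ with $(\abs{\alpha_1}\geq 1,\alpha_2=0)$ after the obvious relabeling of the multi-indices, which is precisely the origin of the overall factor $2$ in the statement. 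The contributions with $\abs{\alpha_1},\abs{\alpha_2}\geq 1$ form the second remainder.

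Each remainder is estimated by reproducing the argument which gave the first inequality of Proposition \ref{prop:spectral gap MS matrix}. After taking the scalar product against $\mathbf{X}$ and successively applying Cauchy--Schwarz in the $j$-sum and in the pairing with $\mathbf{X}$, every trilinear contribution is bounded by $\mu_A N^2$ times the product of the Euclidean norms of its three differentiated arguments; the two factors of $N$ come from the double summation over $(i,j)$ together with the passage between supremum and Euclidean norms on $\R^N$. In the first remainder one of the three arguments is the undifferentiated $\mathbf{c}\geq 0$, whose supremum norm is absorbed into the sharper quantity $\scalprod{\mathbf{c},\mathbf{1}}$ by the elementary bound $\max_\ell c_\ell\leq\sum_\ell c_\ell$; in the second remainder no undifferentiated $\mathbf{c}$ survives, hence the $\scalprod{\mathbf{c},\mathbf{1}}$ factor disappears. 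Summing the multinomial coefficients, each bounded by $3^{\abs{\alpha}}$, yields the announced inequality.

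I do not expect any genuine obstacle: the whole proof reduces to a careful Leibniz expansion coupled with the already-established pointwise bound of Proposition \ref{prop:spectral gap MS matrix}. The only delicate point is the combinatorial bookkeeping of the symmetrization step of the first remainder, which must deliver exactly the factor $2$ in front of the sum restricted by $\abs{\alpha_1}\geq 1$ and force the particular shape of the summation range appearing in the statement.
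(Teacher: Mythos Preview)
Your proposal is correct and follows essentially the same route as the paper: a Leibniz expansion of the trilinear expression $[A(\mathbf{c})\mathbf{U}]_i=\sum_j \Delta_{ij}^{-1}c_ic_j(U_j-U_i)$, isolation of the $(\alpha_1,\alpha_2)=(0,0)$ term as the signed leading contribution, and estimation of the remaining terms via Cauchy--Schwarz together with $\max_\ell c_\ell\leq\langle\mathbf{c},\mathbf{1}\rangle$ and the constant $\mu_A$ from Proposition~\ref{prop:spectral gap MS matrix}. The paper keeps the two ``one-zero'' sub-cases separate until the final bound rather than invoking symmetry upfront, but the resulting factor $2$ and the overall argument are identical.
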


\smallskip
\begin{proof}[Proof of Proposition \ref{prop:estimate derivative MS matrix}]
Let $\mathbf{X}$ be in $\R^N$. We can explicitly compute
\begin{equation*}
\begin{split}
\langle \partial^{\alpha}_x\cro{A(\mathbf{c})\mathbf{U}},\mathbf{X}\rangle = & \sum\limits_{i=1}^N \partial^\alpha_x\pa{\sum\limits_{j=1}^N \frac{c_ic_j}{\Delta_{ij}}(U_j-U_i)}X_i
\\[2mm] = & \sum\limits_{1\leq i,j\leq N}X_i \sum\limits_{\alpha_1+\alpha_2+\alpha_3 = \alpha} \binom{\alpha}{\alpha_1,\alpha_2,\alpha_3}\frac{\partial^{\alpha_1}_x c_i \partial^{\alpha_2}_x c_j}{\Delta_{ij}}\pa{\partial^{\alpha_3}_x U_j - \partial^{\alpha_3}_x U_i}
\\[5mm] = & \ \langle A(\mathbf{c})\partial^{\alpha}_x\mathbf{U},\mathbf{X}\rangle 
\\[1mm]  & + \sum\limits_{1\leq i,j\leq N}X_i \sum\limits_{\underset{|\alpha_1|, |\alpha_2| \geq 1}{\alpha_1+\alpha_2+\alpha_3 = \alpha}}\binom{\alpha}{\alpha_1,\alpha_2,\alpha_3}\frac{\partial^{\alpha_1}_x c_i \partial^{\alpha_2}_x c_j}{\Delta_{ij}}\pa{\partial^{\alpha_3}_x U_j - \partial^{\alpha_3}_x U_i}
\\  & + \sum\limits_{1\leq i,j\leq N}X_i \sum\limits_{\underset{|\alpha_2|\geq 1}{\alpha_2+\alpha_3 = \alpha}}\binom{\alpha}{0,\alpha_2,\alpha_3}\frac{c_i \partial^{\alpha_2}_x c_j}{\Delta_{ij}}\pa{\partial^{\alpha_3}_x U_j - \partial^{\alpha_3}_x U_i} 
\\  & + \sum\limits_{1\leq i,j\leq N}X_i \sum\limits_{\underset{|\alpha_1|\geq 1}{\alpha_1+\alpha_3 = \alpha}}\binom{\alpha}{\alpha_1,0,\alpha_3}\frac{c_j \partial^{\alpha_1}_x c_i }{\Delta_{ij}}\pa{\partial^{\alpha_3}_x U_j - \partial^{\alpha_3}_x U_i}.
\end{split}
\end{equation*}
where $\binom{\alpha}{\alpha_1,\alpha_2,\alpha_3}$ is the multinomial coefficient which is bounded by $3^{\abs{\alpha}}$.

\smallskip
\par We then use Cauchy-Schwarz inequality and the fact that $\Delta = \min_{i,j} \Delta_{ij} >0$, together with $0 \leq c_i \leq \sum_{j=1}^N c_j$ and $\abs{c_i} \leq \norm{\mathbf{c}}$, to finally get
\begin{equation*}
\begin{split}
\langle \partial^{\alpha}_x\cro{A(\mathbf{c})\mathbf{U}},\mathbf{X}\rangle \leq & \ \langle A(\mathbf{c})\partial^{\alpha}_x\mathbf{U},\mathbf{X}\rangle 
\\[1mm]  & + \frac{4 N^2 3^{\abs{\alpha}}}{\Delta}\cro{\pa{\sum\limits_{j=1}^Nc_j} \sum\limits_{\underset{|\alpha_1|\geq 1}{\alpha_1+\alpha_2 = \alpha}}\norm{\partial^{\alpha_1}_x\mathbf{c}}\norm{\partial^{\alpha_2}_x \mathbf{U}}}\norm{\mathbf{X}}       
\\[1mm] & + \frac{2 N^2 3^{\abs{\alpha}}}{\Delta}\cro{\sum\limits_{\underset{|\alpha_1|, |\alpha_2| \geq 1}{\alpha_1+\alpha_2+\alpha_3 = \alpha}}\norm{\partial^{\alpha_1}_x \mathbf{c}}\norm{ \partial^{\alpha_2}_x \mathbf{c}}\norm{\partial^{\alpha_3}_x \mathbf{U}}}\norm{\mathbf{X}},
\end{split}
\end{equation*}
and the expected result follows.
\end{proof}

\smallskip
We conclude the present section with a control on the pseudoinverse of $A(\mathbf{c})$, which is defined on $\pab{\Span (\mathbf{1})}^\perp$ and nullspace $\mbox{Span}(\mathbf{1})$. We wish to stress again the fact that, contrary to \cite{CheJun,JunSte} where an entropy method is used, our analysis does not need the explicit expression of the pseudoinverse $A^{-1}$ to be carried out, which rather simplifies the computations.

\begin{prop}\label{prop:A-1}
For any $\mathbf{c}\in\pa{\R_+^*}^N$ and any $\mathbf{U}\in\pab{\Span (\mathbf{1})}^\bot$, the following estimates hold:
\begin{equation*}
\begin{split}
\norm{A(\mathbf{c})^{-1}\mathbf{U}} &\leq \frac{1}{\lambda_A \pa{\min\limits_{1\leq i \leq N} c_i }^2} \norm{\mathbf{U}},
\\[3mm]  \langle A(\mathbf{c})^{-1}\mathbf{U} , \mathbf{U} \rangle &\leq -\frac{\lambda_A \pa{\min\limits_{1\leq i \leq N} c_i }^2}{\mu_A^2\langle \mathbf{c},\mathbf{1}\rangle^4}\norm{\mathbf{U}}^2,
\end{split}
\end{equation*}
where $\lambda_A$ and $\mu_A$ are defined in Proposition \ref{prop:spectral gap MS matrix}.
\end{prop}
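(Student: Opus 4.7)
The plan is to reduce both bounds to Proposition \ref{prop:spectral gap MS matrix} via the substitution $\mathbf{X} := A(\mathbf{c})^{-1}\mathbf{U}$. The starting observation is that $A(\mathbf{c})$ is symmetric, as follows immediately from $\Delta_{ij}=\Delta_{ji}$ and the definition \eqref{MS matrix}; combined with the kernel characterization $\Ker A(\mathbf{c}) = \Span(\mathbf{1})$ recorded inside the proof of Proposition \ref{prop:spectral gap MS matrix}, this ensures that the image of $A(\mathbf{c})$ is exactly $\pab{\Span(\mathbf{1})}^\perp$, so the pseudoinverse is the bijective inverse of the restriction of $A(\mathbf{c})$ to $\pab{\Span(\mathbf{1})}^\perp$. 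In particular, for $\mathbf{U}\in\pab{\Span(\mathbf{1})}^\perp$ the vector $\mathbf{X}$ is orthogonal to $\mathbf{1}$, so the coercivity bound of Proposition \ref{prop:spectral gap MS matrix} collapses to
$$\langle \mathbf{X}, A(\mathbf{c})\mathbf{X}\rangle \leq -\lambda_A\pa{\min\limits_{1\leq i\leq N} c_i}^2 \norm{\mathbf{X}}^2.$$

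For the first (norm) estimate, I would substitute $A(\mathbf{c})\mathbf{X} = \mathbf{U}$ on the left-hand side and apply Cauchy-Schwarz on the right-hand side to obtain
$$\lambda_A\pa{\min\limits_{1\leq i\leq N} c_i}^2\norm{\mathbf{X}}^2 \leq -\langle \mathbf{X},\mathbf{U}\rangle \leq \norm{\mathbf{X}}\norm{\mathbf{U}},$$
and then divide by $\norm{\mathbf{X}}$ (the case $\mathbf{X}=0$ being trivial).

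For the second (quadratic form) estimate, the same identity $\langle A(\mathbf{c})^{-1}\mathbf{U},\mathbf{U}\rangle = \langle \mathbf{X}, A(\mathbf{c})\mathbf{X}\rangle$ together with the collapsed coercivity bound above already gives the desired inequality with $\norm{\mathbf{X}}^2$ on the right. To trade $\norm{\mathbf{X}}^2$ for $\norm{\mathbf{U}}^2$, I would invoke the operator-norm bound $\norm{\mathbf{U}} = \norm{A(\mathbf{c})\mathbf{X}} \leq \mu_A\langle \mathbf{c},\mathbf{1}\rangle^2\norm{\mathbf{X}}$ from Proposition \ref{prop:spectral gap MS matrix}, which yields $\norm{\mathbf{X}}^2 \geq \norm{\mathbf{U}}^2/\pab{\mu_A^2\langle\mathbf{c},\mathbf{1}\rangle^4}$ and immediately the claim.

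There is no genuine obstacle here: both estimates amount to an algebraic reformulation of the spectral gap and the operator norm of $A(\mathbf{c})$ through the correspondence $\mathbf{X}\leftrightarrow A(\mathbf{c})\mathbf{X}$. The only point worth recording explicitly is that the pseudoinverse is well-defined precisely on $\pab{\Span(\mathbf{1})}^\perp$, which is why I begin by noting the symmetry of $A(\mathbf{c})$ together with its kernel characterization; once this is set, the two inequalities follow in a few lines without any additional structural ingredient, and in particular without ever computing $A(\mathbf{c})^{-1}$ explicitly.
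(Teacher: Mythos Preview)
Your proposal is correct and follows essentially the same argument as the paper: both proofs set $\mathbf{X}=A(\mathbf{c})^{-1}\mathbf{U}$, combine the coercivity bound with Cauchy--Schwarz to obtain the norm estimate, and then use the operator-norm bound $\norm{A(\mathbf{c})\mathbf{X}}\leq \mu_A\langle\mathbf{c},\mathbf{1}\rangle^2\norm{\mathbf{X}}$ to convert $\norm{\mathbf{X}}^2$ into $\norm{\mathbf{U}}^2$ for the quadratic-form estimate. Your additional remark on the symmetry of $A(\mathbf{c})$ to justify that the pseudoinverse maps into $\pab{\Span(\mathbf{1})}^\perp$ is a welcome clarification the paper leaves implicit.
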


\smallskip
\begin{proof}[Proof of Proposition \ref{prop:A-1}]
The proof is a direct application of Proposition \ref{prop:spectral gap MS matrix}. Indeed, Cauchy-Schwarz inequality yields, for any $\mathbf{X}\in\pab{\Span (\mathbf{1})}^\bot$,
$$-\norm{\mathbf{X}}\norm{A(\mathbf{c})\mathbf{X}} \leq -\lambda_A \pa{\min\limits_{1\leq i \leq N} c_i}^2 \norm{\mathbf{X}}^2,$$
so that

$$\norm{\mathbf{X}} \leq \frac{1}{\lambda_A \pa{\min\limits_{1\leq i \leq N} c_i}^2}\norm{A(\mathbf{c})\mathbf{X}},\hspace{4cm}$$
which proves the first estimate by simply taking $\mathbf{X} = A(\mathbf{c})^{-1}\mathbf{U}$.

\medskip
The spectral gap property comes from the boundedness of $A(\mathbf{c})$, given by Proposition \ref{prop:spectral gap MS matrix} for $\mathbf{X} = A(\mathbf{c})^{-1}\mathbf{U}$, which translates into a coercivity estimate
$$\norm{\mathbf{U}}^2 \leq \pabb{\mu_A\langle \mathbf{c},\mathbf{1}\rangle^2}^2 \norm{A(\mathbf{c})^{-1}(\mathbf{U})}^2,$$
that we plug into the spectral gap inequality satisfied by $A(\mathbf{c})$.
\end{proof}
\bigskip

\section{Perturbative Cauchy theory for the Maxwell-Stefan system}\label{sec:perturbed IMS}

\noindent We recall the vectorial form of Maxwell-Stefan system \eqref{MS mass}--\eqref{MS incompressibility}:
\begin{gather}
\partial_t\mathbf{c}+ \nabla_x\cdot\pa{\mathbf{c}\mathbf{u}} = 0, \label{MS mass vectorial}
\\[3mm]  \nabla_x \mathbf{c} = A(\mathbf{c})\mathbf{u}, \label{MS momentum vectorial}
\\[3.5mm]  \nabla_x \cdot\langle \mathbf{c}, \mathbf{u} \rangle =0. \label{MS incompressibility vectorial}
\end{gather}
where
$$A\pa{\mathbf{c}} = \pa{\frac{c_i c_j}{\Delta_{ij}} - \pa{\sum\limits_{k=1}^{N}\frac{c_i c_k}{\Delta_{ik}}}\delta_{ij}}_{1\leq i,j \leq N}.$$
The Cauchy theory we build offers an explicit description of all the solutions $(\mathbf{c},\mathbf{u})$ which are perturbed around a global macroscopic equilibrium state. We point out in particular that, because of the incompressibility condition $\eqref{MS incompressibility vectorial}$, any macroscopic state for which $\bar{\mathbf{c}}$ is stationary has the form~${(\bar{c}_i,\bar{u})_{1\leq i\leq N}}$, where each $\bar{c}_i$ is a positive constant and the velocity $\bar{u}:\R_+\times \T^3\to\R^3$, common to all the species, satisfies $\nabla_x\cdot \bar{u}(t,x)=0$ for any $t\geq 0$ and $x\in\T^3$. For a sake of clarity, throughout the present section any perturbative vector-valued function~${\mathbf{w}=(w_1,\ldots, w_N)}$ shall be written under the specific form $\mathbf{w}=\bar{\mathbf{w}}+\eps \tilde{\mathbf{w}}$, where the component $\bar{\mathbf{w}}$ with the overbar symbol always refers to some (macroscopic) stationary state of the mixture and the component $\tilde{\mathbf{w}}$ overlined by a tilde refers to the fluctuation around the corresponding equilibrium state. Moreover, note that for simplicity the specific quantity $\bar{\mathbf{u}}$ will always denote an $N$-vector where all the components are given by a common incompressible velocity~$\bar{u}$.

\smallskip
The present section is divided into two parts. In the first one, we show how to derive the new orthogonal system equivalent to \eqref{MS mass vectorial}--\eqref{MS momentum vectorial}--\eqref{MS incompressibility vectorial}, and state the counterpart of Theorem \ref{theo:Cauchy MS} in terms of this new reformulation for the unknowns $\mathbf{c}$ and $\mathbf{U}$, the orthogonal part of $\mathbf{u}$. In the second part we prove all the required properties~(existence and uniqueness, positivity and exponential decay to equilibrium) for the couple $(\mathbf{c},\mathbf{U})$, properties that will be also satisfied by the original unknowns $(\mathbf{c},\mathbf{u})$.

\subsection{An orthogonal incompressible Maxwell-Stefan system}\label{subsec:orthogonal MS}  

Here we present the equivalent orthogonal reformulation of \eqref{MS mass vectorial}--\eqref{MS momentum vectorial}--\eqref{MS incompressibility vectorial}, which allows to transfer the study of existence and uniqueness for solutions~$(\mathbf{c},\mathbf{u})$ to the development of a Cauchy theory for the new unknowns $(\mathbf{c},\mathbf{U})$, where we denote with~${\mathbf{U}=\mathbf{u}-\pi_A(\mathbf{u})}$ the part of $\mathbf{u}$ that is projected onto $(\Span(\mathbf{1}))^\perp$, $\pi_A$ being the orthogonal projection onto $\Ker A = \Span(\mathbf{1})$. 
\par Before stating our result, we introduce a useful notation that allows to preserve the vectorial structure of the Maxwell-Stefan system. We suppose that, for some $V\in\R^3$ and some $N$-vector $\mathbf{w}\in(\R^3)^N$ whose components lie in $\R^3$, the standard notation of the scalar product in $\R^3$ is extended to any multiplication of type $V\cdot\mathbf{w}$ in the following sense
\begin{equation*}
V\cdot\mathbf{w} = (V\cdot w_i)_{1\leq i\leq N}.
\end{equation*}

\medskip
\begin{prop}\label{prop:MS orthogonal writing}
Let $s\in\N^*,\  C_0>0$, and consider two functions $\mathbf{c}^\inits \in H^s(\T^3)$ and~${\mathbf{u}^\inits\in H^{s-1}(\T^3)}$ verifying, for almost any $x\in\T^3$,
$$\mathbf{c}^\inits (x)\geq 0 \quad  \textrm{ and }\quad \sum\limits_{i=1}^N c_i^\inits (x)=C_{0}.$$
Then, $(\mathbf{c},\mathbf{u})\in L^\infty\pab{\R_+;H^s(\T^3)}\times L^\infty \pab{\R_+;H^{s-1}(\T^3})$ is a solution to the incompressible Maxwell-Stefan system \eqref{MS mass vectorial}--\eqref{MS momentum vectorial}--\eqref{MS incompressibility vectorial}, associated to the initial datum $(\mathbf{c}^\inits,\mathbf{u}^\inits)$, if and only if there exist two functions $\mathbf{U}:\R_+\times\T^3\to\R^{3N}$ and \ $\bar{u}:\R_+\times\T^3\to \R^3$ in~${L^\infty\pab{\R_+; H^{s-1}(\T^3})}$ such that, for almost any~$(t,x)\in\R_+\times\T^3,$
\begin{gather}
\mathbf{U}(t,x) \in \pa{\Span(\mathbf{1})}^\perp \quad \textrm{ and }\quad \nabla_x\cdot \bar{u}(t,x)=0, \label{Condition1}
\\[3mm]    \hspace{-0.8cm}  \mathbf{u}(t,x) = \mathbf{\bar{u}}(t,x) + \mathbf{V}_U(t,x) \quad \textrm{ with } \quad \mathbf{V}_U = \mathbf{U} - \frac{1}{C_0}\langle \mathbf{c},\mathbf{U}\rangle\mathbf{1}, \label{Condition2}
\\[3mm]      \left\{\begin{array}{l}  \partial_t \mathbf{c} +  \nabla_x\cdot\pa{\mathbf{c}\mathbf{V}_U} +  \bar{u} \cdot \nabla_x \mathbf{c} = 0, \\[5mm]    \nabla_x\mathbf{c} = A(\mathbf{c})\mathbf{U}. \end{array}\right. \label{Condition3}
\end{gather}
\end{prop}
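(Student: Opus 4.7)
The equivalence reduces to direct algebraic manipulations, relying on two elementary properties of the Maxwell-Stefan matrix: $A(\mathbf{c})$ is symmetric (since $\Delta_{ij}=\Delta_{ji}$) and $\mathbf{1}\in\Ker A(\mathbf{c})$. Together these give $\langle A(\mathbf{c})\mathbf{X},\mathbf{1}\rangle=\langle\mathbf{X},A(\mathbf{c})\mathbf{1}\rangle=0$ for every $\mathbf{X}\in\R^N$. The central technical point, which I would establish first in each direction, is the propagation of the total concentration, namely $\langle\mathbf{c}(t,x),\mathbf{1}\rangle\equiv C_0$ on $\R_+\times\T^3$. Testing the momentum equation (\eqref{MS momentum vectorial} or the second line of \eqref{Condition3}) against $\mathbf{1}$ yields $\nabla_x\langle\mathbf{c},\mathbf{1}\rangle=0$, so $\langle\mathbf{c}(t,\cdot),\mathbf{1}\rangle$ depends only on $t$. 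Summing the mass equation over $i$, integrating on the torus and invoking either \eqref{MS incompressibility vectorial} or the condition $\nabla_x\cdot\bar u=0$ in \eqref{Condition1} then gives $\frac{d}{dt}\langle\mathbf{c},\mathbf{1}\rangle=0$, and the initial hypothesis closes the argument.

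\textbf{Forward implication.} Given a solution $(\mathbf{c},\mathbf{u})$ of \eqref{MS mass vectorial}--\eqref{MS incompressibility vectorial}, I set
\[
\bar u := \frac{\langle\mathbf{c},\mathbf{u}\rangle}{C_0}, \qquad \mathbf{U} := \mathbf{u} - \pi_A(\mathbf{u}) = \mathbf{u} - \frac{\langle\mathbf{u},\mathbf{1}\rangle}{N}\mathbf{1}.
\]
By construction $\mathbf{U}\in\pab{\Span(\mathbf{1})}^\perp$, while \eqref{MS incompressibility vectorial} rewrites directly as $\nabla_x\cdot\bar u=0$, proving \eqref{Condition1}. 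A short computation using $\langle\mathbf{c},\mathbf{1}\rangle=C_0$ verifies that $\mathbf{V}_U:=\mathbf{U}-C_0^{-1}\langle\mathbf{c},\mathbf{U}\rangle\mathbf{1}$ equals $\mathbf{u}-\bar{\mathbf{u}}$, which is \eqref{Condition2}. The momentum identity $\nabla_x\mathbf{c}=A(\mathbf{c})\mathbf{U}$ then follows from $A(\mathbf{c})\bar{\mathbf{u}}=\bar u\,A(\mathbf{c})\mathbf{1}=0$. Substituting $\mathbf{u}=\bar{\mathbf{u}}+\mathbf{V}_U$ into \eqref{MS mass vectorial} and using $\nabla_x\cdot(\mathbf{c}\bar{\mathbf{u}})=\bar u\cdot\nabla_x\mathbf{c}$ (thanks to $\nabla_x\cdot\bar u=0$) yields the first equation of \eqref{Condition3}.

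\textbf{Reverse implication.} Conversely, given $(\mathbf{U},\bar u)$ satisfying \eqref{Condition1}--\eqref{Condition3} and defining $\mathbf{u}:=\bar{\mathbf{u}}+\mathbf{V}_U$, the preliminary step applied to the new system again gives $\langle\mathbf{c},\mathbf{1}\rangle\equiv C_0$, which in turn yields the equimolar identity
\[
\langle\mathbf{c},\mathbf{V}_U\rangle = \langle\mathbf{c},\mathbf{U}\rangle - \frac{\langle\mathbf{c},\mathbf{U}\rangle}{C_0}\langle\mathbf{c},\mathbf{1}\rangle = 0.
\]
From this, the incompressibility condition follows immediately from $\nabla_x\cdot\langle\mathbf{c},\mathbf{u}\rangle = C_0\,\nabla_x\cdot\bar u + \nabla_x\cdot\langle\mathbf{c},\mathbf{V}_U\rangle = 0$. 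The original momentum equation is recovered via $A(\mathbf{c})\mathbf{u}=A(\mathbf{c})\mathbf{V}_U=A(\mathbf{c})\mathbf{U}=\nabla_x\mathbf{c}$ (using $A(\mathbf{c})\mathbf{1}=0$ to absorb both $\bar{\mathbf{u}}$ and the $\mathbf{1}$-correction in $\mathbf{V}_U$), and the original mass equation is obtained by reversing the decomposition $\mathbf{u}=\bar{\mathbf{u}}+\mathbf{V}_U$.

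\textbf{Main obstacle.} The only genuine subtlety is the propagation $\langle\mathbf{c},\mathbf{1}\rangle\equiv C_0$: this identity, a joint consequence of $\Ker A=\Span(\mathbf{1})$ and of the incompressibility/divergence-free condition on $\bar u$, is what makes the two formulations equivalent and is precisely what forces the equimolar relation $\langle\mathbf{c},\mathbf{V}_U\rangle=0$ to appear as an intrinsic feature of the model rather than an extra closure assumption. Everything else is linear algebra together with the Leibniz rule, which are justified by the Sobolev regularity $s\geq 1$ (so that the products and divergences make classical sense).
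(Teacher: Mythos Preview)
Your proof is correct and follows essentially the same route as the paper's: both hinge on first establishing $\langle\mathbf{c},\mathbf{1}\rangle\equiv C_0$ from the momentum equation together with incompressibility (or $\nabla_x\cdot\bar u=0$), then exploiting $\Ker A(\mathbf{c})=\Span(\mathbf{1})$ to pass between $\mathbf{u}$ and $\mathbf{U}$. The one presentational difference is that you define $\bar u:=C_0^{-1}\langle\mathbf{c},\mathbf{u}\rangle$ directly, whereas the paper first sets $W=\pi_A(\mathbf{u})$, rewrites the incompressibility as $\nabla_x\cdot\langle\mathbf{c},\mathbf{U}\rangle+C_0\nabla_x\cdot W=0$, and only then infers the existence of a divergence-free $\bar u$ with $W=-C_0^{-1}\langle\mathbf{c},\mathbf{U}\rangle+\bar u$; your explicit choice is the one the paper recovers a posteriori in the Remark following the proof.
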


\medskip
\begin{remark}
The above result is not difficult to prove but we underline again that it is of great importance, since it turns the incompressible Maxwell-Stefan system \eqref{MS mass vectorial}--\eqref{MS momentum vectorial}--\eqref{MS incompressibility vectorial} with full velocity vectors $\mathbf{u}$ into a system only depending on their orthogonal component $\mathbf{U}\in \pab{\Span (\mathbf{1})}^\perp$, while the projection onto $\Span (\mathbf{1})$ raises a simple transport term in the continuity equation $\eqref{MS mass vectorial}$. Notice in particular that we differentiate between $C_0=\scalprod{\mathbf{c}^\inits,\mathbf{1}}$ in $\eqref{Condition2}$ and $\langle \mathbf{c},\mathbf{1}\rangle$ in $\eqref{Condition3}$. As we shall see, in both equations it will turn out that these two quantities are equal, but keeping the notation $\langle \mathbf{c},\mathbf{1}\rangle$ offers a fully closed system.

\smallskip
Moreover, Proposition \ref{prop:MS orthogonal writing} actually shows that all perturbative solutions of the Maxwell-Stefan system \eqref{MS mass}--\eqref{MS momentum}--\eqref{MS incompressibility} are of the form described by Theorem \ref{theo:Cauchy MS}, that is $\bar{\mathbf{c}}+\eps \tilde{\mathbf{c}}$ and $\bar{\mathbf{u}}+\eps\tilde{\mathbf{u}}$.  Note however that Theorem \ref{theo:Cauchy MS} is not optimal, since we require $\bar{\mathbf{u}}$ to be more regular than the perturbation $\tilde{\mathbf{u}}$.
\end{remark}

\begin{proof}[Proof of Proposition \ref{prop:MS orthogonal writing}]
Let $\pi_A$ be the orthogonal projection operator onto $\mbox{Span} (\mathbf{1})$ and consider a solution $(\mathbf{c},\mathbf{u})$ of the Maxwell-Stefan system \eqref{MS mass vectorial}--\eqref{MS momentum vectorial}--\eqref{MS incompressibility vectorial}. The first implication directly follows from the decomposition
\begin{equation*}\label{decomposition}
\mathbf{u} =  \pi_A(\mathbf{u}) + \pab{\mathbf{u}-\pi_A(\mathbf{u})} =  \frac{\langle \mathbf{u},\mathbf{1} \rangle}{\norm{\mathbf{1}}^2} \mathbf{1} + \pi_A^\bot(\mathbf{u}),
\end{equation*}
where we recall that $\norm{\cdot}$ defines the Euclidean norm induced by the scalar product $\scalprod{\cdot,\cdot}$ in~$\R^N$, weighted by the vector $\mathbf{1}$.

First of all, observe that summing over the continuity equations $\eqref{MS mass vectorial}$ and using the incompressibility condition $\eqref{MS incompressibility vectorial}$, it follows that $\partial_t \langle \mathbf{c},\mathbf{1}\rangle = 0$. Moreover, if we sum the gradient relations $\eqref{MS momentum vectorial}$, we also get
$$\nabla_x\pa{\sum\limits_{i=1}^N c_i} = \sum\limits_{1\leq i,j \leq N} \frac{c_ic_j}{\Delta_{ij}}\pa{u_j-u_i} = 0.$$
Therefore, the quantity $\langle \mathbf{c},\mathbf{1}\rangle$ is independent of $t$ and $x$, allowing to initially deduce that
\begin{equation}\label{constant total mass}
\sum\limits_{i=1}^N c_i(t,x) = \sum\limits_{i=1}^N c^\init_i(x)=C_0\quad \textrm{a.e. on } \R_+\times \T^3.
\end{equation}
Now, defining $\mathbf{U} = \pi_A^\bot(\mathbf{u})$ and $W = \frac{\langle \mathbf{u},\mathbf{1}^2 \rangle}{\norm{\mathbf{1}}}$, we easily recover \eqref{Condition1}--\eqref{Condition2}. The transport equation $\eqref{MS momentum vectorial}$ can then be rewritten in terms of $\mathbf{U}$ and $W$ as
\begin{equation}\label{start transport orthogonal}
\partial_t \mathbf{c} + \nabla_x\cdot\pa{\mathbf{c}\mathbf{U}} + \mathbf{c}\nabla_x\cdot W + W\cdot \nabla_x\mathbf{c} =0.
\end{equation}
In a similar way, the incompressibility condition $\eqref{MS incompressibility vectorial}$ in these new unknowns reads
\begin{eqnarray*}
0 &=& \sum\limits_{i=1}^N\nabla_x\cdot \pa{c_i\pa{U_i + W}} = \sum\limits_{i=1}^N\nabla_x\cdot \pa{c_i U_i} + \nabla_x\pa{\sum\limits_{i=1}^Nc_i}\cdot W+ \pa{\nabla_x\cdot W}\sum\limits_{i=1}^N c_i
\\&=& \nabla_x\cdot{\langle \mathbf{c},\mathbf{U}\rangle} + C_0\nabla_x\cdot W,
\end{eqnarray*}
where we have used $\eqref{constant total mass}$. We thus infer the existence of a divergence-free function~${\func{\bar{u}}{\R_+\times\T^3}{\R^3}}$ such that, for almost any $(t,x)\in\T^3\times\R^3,$
\begin{equation} \label{eq:convection velocity}
\left\{\begin{array}{l} 
\nabla_x\cdot \bar{u}(t,x) = 0, 
\\[3mm]     W(t,x) = -\frac{1}{C_0}\langle \mathbf{c},\mathbf{U}\rangle (t,x) + \bar{u}(t,x). 
\end{array} \right.
\end{equation}
Plugging the above relation into $\eqref{start transport orthogonal}$ and replacing $C_0$ by its value $\langle \mathbf{c},\mathbf{1}\rangle$, we recover the first equation of $\eqref{Condition3}$. Finally, the decomposition $\eqref{decomposition}$ also yields the second relation of $\eqref{Condition3}$, since $\pi_A(\mathbf{u})\in \Ker A$ and thus
$$A(\mathbf{c})\mathbf{u} = A(\mathbf{c})\mathbf{U},$$
proving that $(\mathbf{c},\mathbf{U},\bar{u})$ is a solution of the orthogonal reformulation \eqref{Condition1}--\eqref{Condition2}--\eqref{Condition3}.

\medskip
Consider now a triple $(\mathbf{c},\mathbf{U},\bar{u})$ satisfying conditions \eqref{Condition1}--\eqref{Condition2}--\eqref{Condition3}. The reverse implication then follows by defining
\begin{equation}\label{orthogonal solution}
\mathbf{u} = \mathbf{U} + \pa{- \frac{1}{C_0}\langle \mathbf{c},\mathbf{U}\rangle + \bar{u}}\mathbf{1}.
\end{equation}
Indeed, summing over $1\leq i\leq N$ the gradient relations of $\eqref{Condition3}$, we get
$$\nabla_x\pa{\sum\limits_{i=1}^N c_i} =0,$$
which is used when one also sums over $1\leq i\leq N$ the transport equations of $\eqref{Condition3}$, to deduce
\begin{equation*}
0 = \partial_t\pa{\sum\limits_{i=1}^N c_i} + \nabla_x \pa{\sum\limits_{i=1}^N c_i U_i} - \nabla_x\cdot\pa{ \langle \mathbf{c},\mathbf{U}\rangle\frac{\pa{\sum\limits_{i=1}^N c_i}}{\langle \mathbf{c}, \mathbf{1}\rangle}}= \partial_t \pa{\sum\limits_{i=1}^N c_i},
\end{equation*}
since $\scalprod{\mathbf{c},\mathbf{1}}=\sum_{i=1}^N c_i$ by definition.
Thus, the quantity $\scalprod{\mathbf{c},\mathbf{1}}$ is independent of $(t,x)$, allowing to infer that
$$\sum\limits_{i=1}^N c_i(t,x) = \scalprod{\mathbf{c}^\inits,\mathbf{1}} = C_0,\quad \textrm{a.e. on }\   \R_+\times \T^3.$$
This recovery of $\eqref{constant total mass}$ not only implies the incompressibility condition $\eqref{MS incompressibility vectorial}$ but also, with the divergence free property of $\bar{u}$, that
\begin{eqnarray*}
\nabla_x\cdot\pa{\mathbf{c}\mathbf{u}}&=& \nabla_x\cdot\pa{\mathbf{c}\pa{\mathbf{U}-\frac{\langle \mathbf{c},\mathbf{U}\rangle}{C_0} \mathbf{1}}} + \bar{u}\cdot \nabla_x \mathbf{c}
\\&=&\nabla_x\cdot\pa{\mathbf{c}\pa{\mathbf{U}-\frac{\langle \mathbf{c},\mathbf{U}\rangle}{\langle \mathbf{c},\mathbf{1}\rangle} \mathbf{1}}} + \bar{u}\cdot \nabla_x \mathbf{c}.
\end{eqnarray*}
Therefore, the first equation of $\eqref{Condition3}$ rewrites
$$\partial_t\mathbf{c} + \nabla_x\cdot\pa{\mathbf{c}\mathbf{u}}=0,$$
and, thanks again to the fact that $\Ker A=\Span(\mathbf{1})$, one finally sees that
$$\nabla_x\mathbf{c} = A(\mathbf{c})\mathbf{U} = A(\mathbf{c})\mathbf{u}.$$
This ensures that $(\mathbf{c},\mathbf{u})$, with $\mathbf{u}$ defined by $\eqref{orthogonal solution}$, solves the Maxwell-Stefan system \eqref{MS mass vectorial}--\eqref{MS momentum vectorial}--\eqref{MS incompressibility vectorial}, thus concluding the proof.

\end{proof}

\begin{remark}
Since the divergence free component $\bar{u}$ must solve equation $\eqref{eq:convection velocity}$, easy computations using the definitions of $\pi_A(\mathbf{u})$ and $\mathbf{U}$ show that~$\bar{u} = \frac{\langle \mathbf{c}, \mathbf{u} \rangle}{\langle \mathbf{c}, \mathbf{1} \rangle}$, which coincides with the molar average velocity of the mixture, \textit{i.e.} the convection velocity. Moreover, the reconstruction condition $\eqref{Condition2}$ tells us that the full velocity $\mathbf{u}$ is obtained from $\mathbf{\bar{u}}$ and from the vector $\mathbf{V}_U$, which satisfies the equimolar relation $\langle \mathbf{c}, \mathbf{V}_U \rangle = 0$ and thus corresponds to the diffusion velocity. This feature has been pointed out in the introduction: the kinetic decomposition of $\mathbf{u} = \pi_A(\mathbf{u}) + \pi_A(\mathbf{u})^\perp$ into macroscopic and microscopic part, combined with the incompressibility condition that we have imposed on the total flux, naturally leads to the physical splitting of $\mathbf{u}$ into convection velocities $\bar{\mathbf{u}}$ and diffusion velocities $\mathbf{V}_U$.
\end{remark}

By means of this orthogonal reformulation, we can now prove our main result.

\subsection{Proof of Theorem \ref{theo:Cauchy MS}}\label{subsec:Cauchy MS}  

This last part is devoted to showing the validity of Theorem \ref{theo:Cauchy MS}. We shall divide the proof into several steps which help in enlightening the basic ideas behind our strategy. We first restate our result about solutions $(\mathbf{c},\mathbf{u})$ in terms of the orthogonal reformulation \eqref{Condition1}--\eqref{Condition2}--\eqref{Condition3}, about solutions $(\mathbf{c},\mathbf{U},\bar{u})$. Thanks to preliminary lemmata describing the main properties of the matrix $A$ and of its pseudo-inverse obtained in Section \ref{sec:MS matrix} - with range $\pa{\mbox{Span}(\mathbf{1})}^\bot$ and nullspace $\mbox{Span}(\mathbf{1})$, we then derive uniform (in~$\eps$) \textit{a priori} energy estimates for the solution $(\mathbf{c},\mathbf{U})$, which provide the exponential relaxation towards the global equilibrium~$(\bar{\mathbf{c}},\bar{\mathbf{u}})$. Starting from this we are thus able to recover the positivity of $\mathbf{c}$, and to prove global existence and uniqueness for solutions to $\eqref{Condition3}$ having the specific perturbative forms $\mathbf{c}=\bar{\mathbf{c}}+\eps\tilde{\mathbf{c}}$ and $\mathbf{U}=\eps\tilde{\mathbf{U}}$. The combination of these results will eventually allow to deduce global existence, uniqueness and exponential decay for the couple $(\mathbf{c},\mathbf{u})$, using the reconstruction condition $\eqref{Condition2}$.

\bigskip
\noindent \textbf{Step 1 -- Reformulation in terms of orthogonal velocities.} Let us begin with a simple lemma needed in order to understand the shape of the velocities $\mathbf{U}$ and $\bar{u}$, when they are associated to a constant state $\bar{\mathbf{c}}$.

\medskip
\begin{lemma}\label{prop:stationary orthogonal}
Let $s\in \N^*$ and let $\bar{\mathbf{c}}$ be a positive constant $N$-vector. For any functions~${\mathbf{U},\bar{u}}$ in~$L^\infty\pab{\R_+; H^{s-1}(\T^3)}$ such that $\mathbf{U}\in\pab{\Span (\mathbf{1})}^\perp$ and $\nabla_x\cdot \bar{u}=0$, a triple $(\bar{\mathbf{c}},\mathbf{U},\bar{u})$ is solution to the system of equations \eqref{Condition1}--\eqref{Condition2}--\eqref{Condition3} if and only if
$$\mathbf{U}(t,x) =0\quad \textrm{a.e. on }\ \R_+\times \T^3.$$
\end{lemma}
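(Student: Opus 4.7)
The proof is essentially immediate once one isolates the gradient relation in \eqref{Condition3}, and I would treat the two directions separately.

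For the reverse implication, I would simply observe that if $\mathbf{U}\equiv 0$, then by the defining formula $\mathbf{V}_U = \mathbf{U} - \frac{\scalprod{\mathbf{c},\mathbf{U}}}{\scalprod{\mathbf{c},\mathbf{1}}}\mathbf{1}$ one gets $\mathbf{V}_U=0$, and since $\bar{\mathbf{c}}$ is a constant $N$-vector we have $\partial_t\bar{\mathbf{c}}=0$ and $\nabla_x\bar{\mathbf{c}}=0$. Plugging all this into \eqref{Condition3}, both equations reduce to trivial identities ($0=0$ and $0=A(\bar{\mathbf{c}})\cdot 0$), and \eqref{Condition1}--\eqref{Condition2} hold by the standing hypotheses on $\mathbf{U}$ and $\bar{u}$.

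For the forward implication, which is the only part that actually requires the structure of $A$, the idea is to read off $\mathbf{U}$ directly from the second equation of \eqref{Condition3}. Since $\bar{\mathbf{c}}$ is a constant vector, $\nabla_x\bar{\mathbf{c}}=0$, so the gradient relation forces $A(\bar{\mathbf{c}})\mathbf{U}(t,x) = 0$ for almost every $(t,x)\in\R_+\times\T^3$. At this point I would invoke Proposition \ref{prop:spectral gap MS matrix}: because all components of $\bar{\mathbf{c}}$ are strictly positive (and all $\Delta_{ij}>0$), the characterization $\Ker A(\bar{\mathbf{c}}) = \Span(\mathbf{1})$ applies. Therefore $\mathbf{U}(t,x)\in\Span(\mathbf{1})$ a.e., and combining this with the orthogonality requirement $\mathbf{U}(t,x)\in\pab{\Span(\mathbf{1})}^\perp$ imposed by \eqref{Condition1}, we conclude $\mathbf{U}(t,x)\in\Span(\mathbf{1})\cap\pab{\Span(\mathbf{1})}^\perp=\{0\}$, so $\mathbf{U}\equiv 0$ a.e.

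I do not anticipate any real obstacle here: the whole lemma is a direct translation of the kernel description from Proposition \ref{prop:spectral gap MS matrix} into the language of the orthogonal reformulation. The only point deserving mention is that the positivity assumption on $\bar{\mathbf{c}}$ is essential, since it is precisely what rules out any degeneracy in $\Ker A(\bar{\mathbf{c}})$ beyond the trivial line $\Span(\mathbf{1})$; the free divergence-free parameter $\bar{u}$ plays no role in the argument and can be chosen arbitrarily within its class, which is consistent with the non-uniqueness feature already emphasized after Theorem \ref{theo:Cauchy MS}.
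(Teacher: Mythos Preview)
Your proof is correct and follows essentially the same argument as the paper: both reduce the forward implication to the identity $A(\bar{\mathbf{c}})\mathbf{U}=0$ together with $\mathbf{U}\in\pab{\Span(\mathbf{1})}^\perp$, and treat the reverse implication as immediate. The only cosmetic difference is that the paper phrases the conclusion via the well-definedness of the pseudoinverse $A(\bar{\mathbf{c}})^{-1}$ on $\pab{\Span(\mathbf{1})}^\perp$, whereas you invoke the kernel description $\Ker A(\bar{\mathbf{c}})=\Span(\mathbf{1})$ directly; these are two ways of expressing the same fact.
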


\begin{proof}[Proof of Lemma \ref{prop:stationary orthogonal}]
The proof is very simple. Because $\bar{\mathbf{c}}$ is constant, the gradient equation of $\eqref{Condition3}$ reads $A(\bar{\mathbf{c}})\mathbf{U}=0$. But $\mathbf{U}$ belongs to $(\Span(\mathbf{1}))^\perp$, which means that the pseudoinverse $A^{-1}$ - with range $\pa{\mbox{Span}(\mathbf{1})}^\bot$ and nullspace $\mbox{Span}(\mathbf{1})$ -  remains well-defined. Consequently, for almost any $(t,x)\in\R_+\times\T^3$, $\mathbf{U}(t,x)=0$. 

\smallskip
The reverse implication is direct.
\end{proof}

We are now interested in building a Cauchy theory for the orthogonal form of the Maxwell-Stefan system, around the stationary solutions  given by Lemma \ref{prop:stationary orthogonal}. More precisely, we want to prove existence and uniqueness for perturbative solutions to \eqref{Condition1}--\eqref{Condition2}--\eqref{Condition3} of the form
$$\left\{\begin{array}{l}\mathbf{c}(t,x) = \bar{\mathbf{c}} + \eps \tilde{\mathbf{c}},\\[2mm] \mathbf{U} = \eps\tilde{\mathbf{U}}.\end{array}\right.$$
In terms of these particular solutions, the system \eqref{Condition1}--\eqref{Condition2}--\eqref{Condition3} translates into
\begin{eqnarray}
&&\partial_t \tilde{\mathbf{c}} + \bar{\mathbf{c}}\nabla_x\cdot \mathbf{V}_{\tilde{U}} + \bar{u}\cdot\nabla_x\tilde{\mathbf{c}}+ \eps \nabla_x\cdot\pa{\tilde{\mathbf{c}}\mathbf{V}_{\tilde{U}}}=0, \label{perturbed MS mass}
\\[2mm]  &&\nabla_x\tilde{\mathbf{c}} = A(\bar{\mathbf{c}}+\eps\tilde{\mathbf{c}})\tilde{\mathbf{U}}, \label{perturbed MS momentum}
\end{eqnarray}
with the notation $\mathbf{V}_{\tilde{U}} = \tilde{\mathbf{U}} - \frac{\langle \mathbf{\mathbf{c}},\tilde{\mathbf{U}}\rangle}{\langle \mathbf{\mathbf{c}},\mathbf{1}\rangle}\mathbf{1}$.
The orthogonal reformulation of Theorem \ref{theo:Cauchy MS} then writes in the following way.

\smallskip
\begin{theorem}\label{theo:Cauchy MS orthogonal}
Let $s > 3$ be an integer, $\func{\bar{u}}{\R_+\times\T^3}{\R^3}$ be in $L^\infty\pab{\R_+;H^{s}(\T^3)}$ with $\nabla_x\cdot \bar{u}=0$ and consider a constant $N$-vector $\bar{\mathbf{c}}>0$. There exist $\delta_s$, $C_s$, $\lambda_s >0$ such that for all $\eps\in (0,1]$ and for any~${\pa{\tilde{\mathbf{c}}^\init,\tilde{\mathbf{U}}^\init}}\in H^s(\T^3) \times H^{s-1}(\T^3)$ satisfying, for almost any $x\in\T^3$ and for any~${1\leq i\leq N}$,
\begin{itemize}
\item[(i)]\textbf{Mass compatibility: } $\disp{\sum_{i=1}^N \tilde{c}^\init_i(x) = 0 \quad\mbox{and}\quad \int_{\T^3}\tilde{c}_i^\init(x) \dd x = 0}$,\\
\item[(ii)]\textbf{Mass positivity: } $\disp{\bar{c}_i +\eps \tilde{c}_i^\init(x) > 0}$, \\
\item[(iii)]\textbf{Moment compatibility: } $\disp{\tilde{\mathbf{U}}^\init(x) = A\pa{\bar{\mathbf{c}}+\eps\tilde{\mathbf{c}}^\init(x)}^{-1}\nabla_x \tilde{\mathbf{c}}^\init(x)}$,\\
\item[(iv)] \textbf{Smallness assumptions: } $\disp{\norm{\tilde{\mathbf{c}}^\init}_{H^s_x}\leq \delta_s}\quad$ and $\quad\disp{\norm{\bar{u}}_{L^\infty_t H^{s}_x}\leq \delta_s}$,
\end{itemize}
there exists a unique weak solution $\pa{\tilde{\mathbf{c}},\tilde{\mathbf{U}}}\in L^\infty\pab{\R_+;H^s(\T^3)} \times L^\infty\pab{\R_+;H^{s-1}(\T^3)}$ to the system of equations \eqref{perturbed MS mass}--\eqref{perturbed MS momentum}, having $\pa{\tilde{\mathbf{c}}^\init, \tilde{\mathbf{U}}^\init}$ as initial datum. In particular, for almost any $(t,x)\in\R_+\times\T^3$, the vector $\mathbf{c}(t,x)=\bar{\mathbf{c}}+\eps\tilde{\mathbf{c}}(t,x)$ is positive and~$\tilde{\mathbf{U}}(t,x)$ belongs to $(\Span(\mathbf{1}))^\perp$.

\noindent Moreover, the following estimates hold for almost any $t\geq 0$
\begin{eqnarray*}
\norm{\tilde{\mathbf{c}}}_{\sobolevx{s}}  & \leq & e^{-\lambda_s t}\norm{\tilde{\mathbf{c}}^\init}_{\sobolevx{s}},
\\[4mm]     \norm{\tilde{\mathbf{U}}}_{H^{s-1}_x} & \leq & C_s e^{-\lambda_s t}\norm{\tilde{\mathbf{c}}^\init}_{\sobolevx{s}},
\\[4mm]        \int_0^te^{2\lambda_s(t-\tau)}\norm{\tilde{\mathbf{U}}(\tau)}^2_{H^{s}_x}\dd \tau & \leq &  C'_s \norm{\tilde{\mathbf{c}}^\init}^2_{\sobolevx{s}}.
\end{eqnarray*}
The constants $\delta_s$, $\lambda_s$ and $C_s$ are constructive and are given respectively by $\eqref{deltas}$, $\eqref{lambdas}$ and $\eqref{Cs}$.
\end{theorem}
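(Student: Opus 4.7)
The plan is to construct perturbative solutions of the orthogonal reformulation \eqref{Condition1}--\eqref{Condition2}--\eqref{Condition3} by setting up a Picard-type iterative scheme and proving uniform (in $n$ and $\varepsilon$) energy estimates in the anisotropic weighted norm $\sobolevx{s}$. The starting observation is that, on the orthogonal complement of $\Span(\mathbf{1})$, the matrix $A(\boldc)$ is invertible (Proposition \ref{prop:A-1}), so the gradient relation \eqref{perturbed MS momentum} uniquely determines $\tilde{\boldU}=A(\bar{\boldc}+\varepsilon\tilde{\boldc})^{-1}\nabla_x\tilde{\boldc}$, turning the system on $(\tilde{\boldc},\tilde{\boldU})$ into a closed quasilinear parabolic equation on $\tilde{\boldc}$ alone. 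I would linearise this equation by freezing the matrix and the transport coefficient at the previous iterate $\tilde{\boldc}^{n}$, solve the resulting linear parabolic problem by standard theory to produce $\tilde{\boldc}^{n+1}$, and recover $\tilde{\boldU}^{n+1}$ via the gradient relation; mass compatibility $(i)$ is propagated in time because summing the mass equation gives $\partial_t\scalprod{\tilde{\boldc},\mathbf{1}}+\bar{u}\cdot\nabla_x\scalprod{\tilde{\boldc},\mathbf{1}}=0$ and similarly for the mean.

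The heart of the argument is a uniform \textbf{a priori} estimate for the nonlinear system. For any $|\alpha|\leq s$, I apply $\partial^{\alpha}_x$ to \eqref{perturbed MS mass} and test against $\bar{\boldc}^{-1}\partial^{\alpha}_x\tilde{\boldc}$ in the weighted $L^2_x(\bar{\boldc}^{-1/2})$ pairing. The $\bar{u}\cdot\nabla_x\tilde{\boldc}$ term integrates to zero on the diagonal since $\nabla_x\cdot\bar{u}=0$ and produces commutators $[\partial^{\alpha}_x,\bar{u}\cdot\nabla_x]\tilde{\boldc}$ controlled, via Moser-type Sobolev product estimates and $s>3$, by $\norm{\bar{u}}_{H^s_x}\norm{\tilde{\boldc}}_{\sobolevx{s}}^{2}$; the nonlinear term $\varepsilon\nabla_x\cdot(\tilde{\boldc}\mathbf{V}_{\tilde{U}})$ is similarly cubic. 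The decisive computation is the term $\bar{\boldc}\nabla_x\cdot\mathbf{V}_{\tilde{U}}$: after integration by parts the top-order piece becomes (up to sign) $\int_{\T^3}\scalprod{\mathbf{V}_{\tilde{U}},\nabla_x\partial^{\alpha}_x\tilde{\boldc}}\dd x$, and since $\mathbf{V}_{\tilde{U}}$ differs from $\tilde{\boldU}$ by a multiple of $\mathbf{1}$ and $\scalprod{\mathbf{1},\nabla_x\partial^{\alpha}_x\tilde{\boldc}}=\nabla_x\partial^{\alpha}_x\sum_i\tilde{c}_i=0$ by mass compatibility, this equals $\int_{\T^3}\scalprod{\partial^{\alpha}_x\tilde{\boldU},A(\boldc)\partial^{\alpha}_x\tilde{\boldU}}\dd x$ (using \eqref{perturbed MS momentum} differentiated $\alpha$ times and Proposition \ref{prop:estimate derivative MS matrix} to peel off commutators with $A$). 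Proposition \ref{prop:spectral gap MS matrix}, combined with the fact that $\tilde{\boldU}\in(\Span(\mathbf{1}))^{\perp}$ almost everywhere, then extracts the coercive feedback $-\lambda_A(\min\bar{c}_i)^2\norm{\partial^{\alpha}_x\tilde{\boldU}}_{L^2_x}^{2}/2$, up to lower-order terms absorbed by smallness.

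Summing on $|\alpha|\leq s$ and using Proposition \ref{prop:A-1} together with Poincaré's inequality (applicable to each $\tilde{c}_i$ since its mean vanishes) to convert $\norm{\tilde{\boldU}}_{H^{s}_x}^{2}$ into $\norm{\tilde{\boldc}}_{\sobolevx{s}}^{2}$, I obtain a differential inequality of the form
\begin{equation*}
\frac{1}{2}\frac{\dd}{\dd t}\norm{\tilde{\boldc}}_{\sobolevx{s}}^{2}+K_s\norm{\tilde{\boldU}}_{H^{s}_x}^{2}\leq C\pab{\varepsilon\norm{\tilde{\boldc}}_{\sobolevx{s}}+\norm{\bar{u}}_{H^{s}_x}}\norm{\tilde{\boldc}}_{\sobolevx{s}}^{2}+C'\norm{\bar{u}}_{H^{s}_x}\norm{\tilde{\boldU}}_{H^{s}_x}\norm{\tilde{\boldc}}_{\sobolevx{s}},
\end{equation*}
which, under the smallness assumption $(iv)$ with $\delta_s$ sufficiently small, yields $\frac{\dd}{\dd t}\norm{\tilde{\boldc}}_{\sobolevx{s}}^{2}\leq -2\lambda_s\norm{\tilde{\boldc}}_{\sobolevx{s}}^{2}$ together with the integrated dissipation bound on $\int_0^{t}e^{2\lambda_s(t-\tau)}\norm{\tilde{\boldU}}_{H^{s}_x}^{2}\dd\tau$. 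The bound on $\norm{\tilde{\boldU}}_{H^{s-1}_x}$ then follows from Proposition \ref{prop:A-1} applied componentwise to \eqref{perturbed MS momentum}.

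The estimates above, being uniform along the iteration, yield convergence of the scheme in a lower-regularity norm (standard contraction in $L^\infty_tL^2_x$) and thus local-in-time existence of a solution in the perturbative ball; the exponential decay then extends this solution globally by a continuation argument, while preserving the smallness and hence the positivity $\bar{c}_i+\varepsilon\tilde{c}_i>0$ from $(ii)$ for all times. The orthogonality $\tilde{\boldU}(t,\cdot)\in(\Span(\mathbf{1}))^\perp$ is automatic from the definition $\tilde{\boldU}=A(\boldc)^{-1}\nabla_x\tilde{\boldc}$ on the range of $A$. Uniqueness in the perturbative regime follows by a direct energy estimate on the difference of two solutions, identical in structure to the \textbf{a priori} estimate. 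The main obstacle I anticipate is handling the higher-order commutators with $A(\boldc)$ and with the nonconstant transport coefficient $\bar{u}$ carefully enough that the coercive contribution of $A$ is not spoiled; this is precisely why the anisotropic $\bar{\boldc}^{-1/2}$ weight and the quantitative Sobolev bounds of Proposition \ref{prop:estimate derivative MS matrix} are needed, and why $\bar{u}$ is asked to be one derivative smoother than the perturbation.
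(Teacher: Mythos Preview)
Your proposal is correct and follows essentially the same strategy as the paper: reduce to a quasilinear parabolic equation on $\tilde{\boldc}$ via the pseudoinverse of $A$, derive the key a priori estimate by testing $\partial^\alpha_x$ of \eqref{perturbed MS mass} against $\bar{\boldc}^{-1}\partial^\alpha_x\tilde{\boldc}$ and extracting the coercive feedback of $A$ through orthogonality and Proposition~\ref{prop:spectral gap MS matrix}, close via Poincar\'e and the gradient relation, then build existence by iteration and uniqueness by an $L^2$ energy estimate on the difference. The only minor deviations are that the paper bounds the $\bar{u}$-term directly after integration by parts (using the gradient relation to control the extra derivative $\nabla_x\partial^\alpha_x\tilde{\boldc}$) rather than via the antisymmetry/commutator argument you sketch, and passes to the limit in the scheme by weak compactness rather than by contraction in a lower norm; both variants are standard and lead to the same result.
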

\bigskip


\medskip
\noindent \textbf{Step 2 -- \textit{A priori} energy estimates and positivity.} The two \textit{a priori} results~(exponential decay and positivity of $\mathbf{c}$) that we now derive are of crucial importance, as they will allow us to exhibit existence and uniqueness for the couple $(\tilde{\mathbf{c}},\tilde{\mathbf{U}})$ in the next section.
\par Before we start, we present a simple result which establishes two relevant properties satisfied by the solution of \eqref{perturbed MS mass}--\eqref{perturbed MS momentum}. We show in particular that $\tilde{\mathbf{c}}$ has zero mean on the torus, a feature that will let us exploit Poincar\'e inequality in the proof of the \textit{a priori} estimates. 

\smallskip
\begin{lemma}\label{lem:preservations}
Let $\bar{\mathbf{c}}, C_0>0$ be such that $\scalprod{\bar{\mathbf{c}},\mathbf{1}}=C_0$, and consider a triple $(\tilde{\mathbf{c}}^\init,\tilde{\mathbf{U}}^\init,\bar{u})$ satisfying the hypotheses of Theorem \ref{theo:Cauchy MS orthogonal}. If $(\tilde{\mathbf{c}},\tilde{\mathbf{U}})$ is a weak solution of \eqref{perturbed MS mass}--\eqref{perturbed MS momentum} with initial datum $(\tilde{\mathbf{c}}^\init,\tilde{\mathbf{U}}^\init)$, then, for almost any $(t,x)\in\R_+\times\T^3$ and for any $1\leq i\leq N$,
\begin{gather}
\sum\limits_{i=1}^N \tilde{c}_i(t,x) = 0  \quad \textrm{ and }\quad  \int_{\T^3} \tilde{c}_i(t,x)\dd x = 0.
\end{gather}
In particular, the conservation of the total mass $\langle \mathbf{c} ,\mathbf{1}\rangle = C_0$ holds almost everywhere on~${\R_+\times\T^3}$.
\end{lemma}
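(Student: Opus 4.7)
The plan is to exploit two structural features of the orthogonal system: the fact that $\mathbf{1}\in\Ker A(\mathbf{c})$ on both sides (since $A(\mathbf{c})$ is symmetric in the sense that $A(\mathbf{c})^T\mathbf{1}=0$ by direct inspection of the matrix $\eqref{MS matrix}$), and the intrinsic equimolar relation $\scalprod{\mathbf{c},\mathbf{V}_{\tilde U}}=0$ built into the definition of $\mathbf{V}_{\tilde U}$.

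First I would sum the gradient relation $\eqref{perturbed MS momentum}$ over the species index. Using $\mathbf{1}^T A(\bar{\mathbf{c}}+\eps\tilde{\mathbf{c}})=0$ (immediate from the formula for $A$), this yields $\nabla_x \sum_{i=1}^N \tilde{c}_i = 0$, so $\sum_i \tilde{c}_i(t,\cdot)$ is spatially constant for almost every $t$. Next I would check from $\mathbf{V}_{\tilde U}=\tilde{\mathbf{U}}-\frac{\scalprod{\mathbf{c},\tilde{\mathbf{U}}}}{\scalprod{\mathbf{c},\mathbf{1}}}\mathbf{1}$ that $\scalprod{\mathbf{c},\mathbf{V}_{\tilde U}}=0$, and rewrite this as $\scalprod{\bar{\mathbf{c}},\mathbf{V}_{\tilde U}}=-\eps\scalprod{\tilde{\mathbf{c}},\mathbf{V}_{\tilde U}}$. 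Since $\bar{\mathbf{c}}$ is constant in $x$, this identity lets me combine the second and fourth terms obtained upon summing the continuity equation $\eqref{perturbed MS mass}$ over $i$ into a single divergence that vanishes, leaving
\begin{equation*}
\partial_t\Big(\sum_{i=1}^N \tilde{c}_i\Big)+\bar{u}\cdot\nabla_x\Big(\sum_{i=1}^N \tilde{c}_i\Big)=0.
\end{equation*}
Combined with spatial constancy, this forces $\partial_t \sum_i \tilde{c}_i=0$, and the mass compatibility assumption $(i)$ gives $\sum_i \tilde{c}_i(t,x)=0$ a.e.

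Second, to obtain the zero mean property, I would integrate $\eqref{perturbed MS mass}$ component by component over $\T^3$. The two divergence terms vanish by periodicity, and the transport term rewrites as $\bar{u}\cdot\nabla_x\tilde{c}_i=\nabla_x\cdot(\bar{u}\tilde{c}_i)$ since $\nabla_x\cdot\bar{u}=0$, which also integrates to zero on the torus. Hence $\frac{d}{dt}\int_{\T^3}\tilde{c}_i(t,x)\,\dd x=0$, and assumption $(i)$ gives the conclusion.

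Finally, the conservation $\scalprod{\mathbf{c},\mathbf{1}}=C_0$ follows immediately from writing $\mathbf{c}=\bar{\mathbf{c}}+\eps\tilde{\mathbf{c}}$ and using the two identities just established together with $\scalprod{\bar{\mathbf{c}},\mathbf{1}}=C_0$. There is no real obstacle here beyond being careful with the decomposition $\mathbf{c}=\bar{\mathbf{c}}+\eps\tilde{\mathbf{c}}$ in the term $\bar{\mathbf{c}}\nabla_x\cdot\mathbf{V}_{\tilde U}$, which would fail to combine nicely with the $\eps$-term if $\bar{\mathbf{c}}$ were not constant in $x$; this is precisely where the assumption that the equilibrium concentrations are constant enters in a crucial way.
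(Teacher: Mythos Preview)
Your proof is correct and follows essentially the same route as the paper. The paper's version is slightly more streamlined: it first observes that, because $\bar{\mathbf{c}}$ is constant and $\nabla_x\cdot\bar{u}=0$, the entire continuity equation $\eqref{perturbed MS mass}$ can be rewritten in divergence form as $\partial_t\tilde{\mathbf{c}}+\nabla_x\cdot(\mathbf{c}\mathbf{V}_{\tilde U}+\tilde{\mathbf{c}}\bar{\mathbf{u}})=0$, and then both conclusions (summing over $i$ using $\scalprod{\mathbf{c},\mathbf{V}_{\tilde U}}=0$, and integrating over $\T^3$) follow at once from this single identity; you instead treat the two parts separately and invoke the equimolar relation explicitly when summing, which amounts to the same computation unpacked.
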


\smallskip
\begin{proof}[Proof of Lemma \ref{lem:preservations}]
We have already showed how to recover the preservation of the total mass inside the proof of Proposition \ref{prop:MS orthogonal writing}. 

\vskip 1mm
The second property follows directly from the fact that $\bar{\mathbf{c}}$ is a constant $N$-vector and $\bar{u}$ is divergence-free. Indeed, using these two assumptions the mass equation $\eqref{perturbed MS mass}$ can be written under a divergent form as
$$\partial_t \tilde{\mathbf{c}} + \nabla_x \cdot \pa{\mathbf{c}\VV+\tilde{\mathbf{c}}\bar{\mathbf{u}}} =0.$$
Integrating over the torus we thus obtain
$$\frac{\dd}{\dd t}\int_{\T^3} \tilde{\mathbf{c}}(t,x)\dd x = 0\quad \textrm{for a.e. } t \geq 0,$$
which gives the expected result since $\tilde{\mathbf{c}}^\init$ has zero mean on the torus.
\end{proof}

\medskip
The result providing the \textit{a priori} energy estimates is then the following.

\smallskip
\begin{prop}\label{prop:a priori MS}
Let $s>3$ be an integer. There exist $\delta_s, \lambda_s, C_s , C'_s >0$ such that, under the assumptions of Theorem \ref{theo:Cauchy MS orthogonal}, if $(\tilde{\mathbf{c}},\tilde{\mathbf{U}},\bar{u})$ is a solution of the perturbed orthogonal system \eqref{perturbed MS mass}--\eqref{perturbed MS momentum} satisfying the initial controls
$$\norm{\tilde{\mathbf{c}}^\init}_{H^s_x}\leq \delta_s\ \quad\mbox{and}\quad\ \norm{\bar{u}}_{L^\infty_t H^{s}_x}\leq \delta_s,$$
then, for almost any $t\geq 0,$
\begin{eqnarray*}
\norm{\tilde{\mathbf{c}}}_{\sobolevx{s}} & \leq &  e^{-\lambda_s t}\norm{\tilde{\mathbf{c}}^\init}_{\sobolevx{s}},
\\[4mm]     \norm{\tilde{\mathbf{U}}}_{H^{s-1}_x} & \leq & C_s e^{-\lambda_s t}\norm{\tilde{\mathbf{c}}^\init}_{\sobolevx{s}},
\\[4mm]        \int_0^te^{2\lambda_s(t-\tau)}\norm{\tilde{\mathbf{U}}(\tau)}^2_{H^{s}_x}\dd \tau & \leq & C'_s \norm{\tilde{\mathbf{c}}^\init}^2_{\sobolevx{s}}.
\end{eqnarray*}
The constants $\delta_s$, $\lambda_s$, $C_s$ and $C'_s$ are explicit and only depend on $s$, the number of species $N$, the diffusion coefficients $(\Delta_{ij})_{1\leq i,j\leq N}$ and the constant vector~$\bar{\mathbf{c}}$. In particular, they are independent of the parameter $\eps$.
\end{prop}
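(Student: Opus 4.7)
My plan is to establish the three estimates by a weighted energy method adapted to the orthogonal reformulation, in which the weight $\bar{\mathbf{c}}^{-1/2}$ is chosen to exploit both the translation invariance of the Maxwell-Stefan operator along $\Span(\mathbf{1})$ and its coercivity on $\pab{\Span(\mathbf{1})}^\perp$. For every multi-index $\alpha\in\N^3$ with $|\alpha|\leq s$, I apply $\partial_x^\alpha$ to the mass equation $\eqref{perturbed MS mass}$, pair componentwise with $\partial_x^\alpha\tilde{c}_i/\bar{c}_i$, and integrate on $\T^3$. The weight $\bar{c}_i^{-1}$ is designed precisely to cancel the constant factor $\bar{c}_i$ in front of $\nabla_x\cdot V_{\tilde{U},i}$; integrating by parts this divergence term and summing over $i$ produces the basic identity
\begin{equation*}
\frac{1}{2}\frac{\dd}{\dd t}\norm{\partial_x^\alpha\tilde{\mathbf{c}}}^2_{L^2_x(\bar{\mathbf{c}}^{-1/2})} = \int_{\T^3}\scalprod{\partial_x^\alpha\mathbf{V}_{\tilde{U}},\nabla_x\partial_x^\alpha\tilde{\mathbf{c}}}\,\dd x - T_\alpha - \eps N_\alpha,
\end{equation*}
where $T_\alpha$ collects the transport contribution coming from $\partial_x^\alpha(\bar{u}\cdot\nabla_x\tilde{\mathbf{c}})$ and $N_\alpha$ the quadratic contribution coming from $\partial_x^\alpha\nabla_x\cdot(\tilde{\mathbf{c}}\mathbf{V}_{\tilde{U}})$.

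The core of the argument is to substitute the gradient relation $\eqref{perturbed MS momentum}$ into the leading term, $\nabla_x\partial_x^\alpha\tilde{\mathbf{c}} = \partial_x^\alpha(A(\mathbf{c})\tilde{\mathbf{U}})$, and then transfer $A(\mathbf{c})$ across the inner product by symmetry. Proposition \ref{prop:estimate derivative MS matrix}, applied pointwise with $\mathbf{X}(x) = \partial_x^\alpha\mathbf{V}_{\tilde{U}}(x)$, gives
\begin{equation*}
\int_{\T^3}\scalprod{\partial_x^\alpha\mathbf{V}_{\tilde{U}},\partial_x^\alpha(A(\mathbf{c})\tilde{\mathbf{U}})}\,\dd x = \int_{\T^3}\scalprod{A(\mathbf{c})\partial_x^\alpha\tilde{\mathbf{U}},\partial_x^\alpha\mathbf{V}_{\tilde{U}}}\,\dd x + \mathcal{R}_\alpha,
\end{equation*}
with a commutator $\mathcal{R}_\alpha$ made of triple products of Sobolev norms of $\mathbf{c}$ and $\tilde{\mathbf{U}}$, each necessarily carrying at least one factor of $\eps$ since every nontrivial derivative of $\mathbf{c} = \bar{\mathbf{c}}+\eps\tilde{\mathbf{c}}$ equals $\eps$ times a derivative of $\tilde{\mathbf{c}}$. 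The crucial algebraic remark is that $\scalprod{\mathbf{c},\mathbf{1}}=C_0$ is constant by Lemma \ref{lem:preservations}, so that $\partial_x^\alpha\mathbf{V}_{\tilde{U}} - \partial_x^\alpha\tilde{\mathbf{U}}$ is a scalar multiple of $\mathbf{1}$ and hence lies in $\Ker A(\mathbf{c})$; consequently $A(\mathbf{c})\partial_x^\alpha\mathbf{V}_{\tilde{U}} = A(\mathbf{c})\partial_x^\alpha\tilde{\mathbf{U}}$. Since $\partial_x^\alpha\tilde{\mathbf{U}}\in\pab{\Span(\mathbf{1})}^\perp$, Proposition \ref{prop:spectral gap MS matrix} combined with the perturbative lower bound $\min_i c_i\geq \frac{1}{2}\min_i\bar{c}_i$ (valid through $H^s\hookrightarrow L^\infty$ for $s>3$ as soon as $\eps\norm{\tilde{\mathbf{c}}}_{H^s_x}$ is small) delivers the coercive dissipation
\begin{equation*}
-\int_{\T^3}\scalprod{A(\mathbf{c})\partial_x^\alpha\tilde{\mathbf{U}},\partial_x^\alpha\tilde{\mathbf{U}}}\,\dd x \geq \frac{\lambda_A}{4}\pab{\min_i\bar{c}_i}^2\norm{\partial_x^\alpha\tilde{\mathbf{U}}}^2_{L^2_x}.
\end{equation*}

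Once the dissipation is isolated, I sum over $|\alpha|\leq s$ and estimate the remainders $T_\alpha$, $\mathcal{R}_\alpha$ and $N_\alpha$ by classical Moser product and commutator inequalities; each appears as a triple product in which at least one factor is controlled by $\norm{\bar{u}}_{H^s_x}\leq\delta_s$, by $\eps\norm{\tilde{\mathbf{c}}}_{H^s_x}\leq\delta_s$, or is of the form $\norm{\tilde{\mathbf{U}}}_{H^s_x}$ absorbable by Young's inequality into the dissipation. To close the estimate into a genuine exponential decay, I use Proposition \ref{prop:spectral gap MS matrix} once more to deduce $\norm{\nabla_x\tilde{\mathbf{c}}}_{H^{s-1}_x} = \norm{A(\mathbf{c})\tilde{\mathbf{U}}}_{H^{s-1}_x}\leq C\norm{\tilde{\mathbf{U}}}_{H^{s-1}_x}$ and Poincaré's inequality on $\T^3$ (licit because $\tilde{\mathbf{c}}$ has zero mean by Lemma \ref{lem:preservations}) to obtain $\norm{\tilde{\mathbf{c}}}^2_{H^s_x}\leq C\norm{\tilde{\mathbf{U}}}^2_{H^s_x}$. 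This upgrades the energy-dissipation inequality into a differential inequality of the form
\begin{equation*}
\frac{\dd}{\dd t}\norm{\tilde{\mathbf{c}}}^2_{\sobolevx{s}} + 2\lambda_s\norm{\tilde{\mathbf{c}}}^2_{\sobolevx{s}} \leq 0,
\end{equation*}
from which Gronwall's lemma yields the first decay estimate. The $H^{s-1}$ bound on $\tilde{\mathbf{U}}$ is then a direct consequence of Proposition \ref{prop:A-1} applied to $\tilde{\mathbf{U}} = A(\mathbf{c})^{-1}\nabla_x\tilde{\mathbf{c}}$, and the weighted time-integral estimate follows by multiplying the same inequality by $e^{2\lambda_s t}$, integrating on $[0,t]$, and using that the dissipation controls $\norm{\tilde{\mathbf{U}}}^2_{H^s_x}$.

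The principal obstacle will be keeping $\mathcal{R}_\alpha$ and $N_\alpha$ truly perturbative uniformly in $\eps$ at the top level $|\alpha|=s$: the Leibniz sums of Proposition \ref{prop:estimate derivative MS matrix} generate terms in which a single factor may sit at the maximal Sobolev level $H^s_x$, forcing all other factors to be placed in $L^\infty$ through $H^{s-1}\hookrightarrow L^\infty$ (which is exactly why one imposes $s>3$). A careful distribution of derivatives is needed so that each remaining factor is bounded by a smallness $\delta_s$ or $\eps$, and the cumulative constants from all Moser-type estimates must be strictly dominated by the dissipation constant $\lambda_A\pab{\min_i\bar{c}_i}^2/4$; this chain of quantitative bookkeeping is what ultimately fixes the admissible value of $\delta_s$.
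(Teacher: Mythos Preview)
Your proposal is correct and follows essentially the same route as the paper's proof: the weighted energy identity with weight $\bar{\mathbf{c}}^{-1/2}$, substitution of $\eqref{perturbed MS momentum}$ into the leading term, the spectral-gap dissipation from Proposition \ref{prop:spectral gap MS matrix}, Sobolev product estimates for the commutators, Poincar\'e (via Lemma \ref{lem:preservations}) to close, and Gr\"onwall. The one small technical variation is where the $\Span(\mathbf{1})$-component is eliminated: the paper observes directly that $\nabla_x\partial_x^\alpha\tilde{\mathbf{c}}\in\pab{\Span(\mathbf{1})}^\perp$ (by summing $\eqref{perturbed MS momentum}$), so the $\mathbf{1}$-part of $\partial_x^\alpha\mathbf{V}_{\tilde{U}}$ drops out of $\scalprod{\nabla_x\partial_x^\alpha\tilde{\mathbf{c}},\partial_x^\alpha\mathbf{V}_{\tilde{U}}}$ \emph{before} invoking Proposition \ref{prop:estimate derivative MS matrix}, which keeps the remainder $\mathcal{R}_\alpha$ expressed purely in terms of $\norm{\partial_x^\alpha\tilde{\mathbf{U}}}$ rather than $\norm{\partial_x^\alpha\mathbf{V}_{\tilde{U}}}$; your route via $A(\mathbf{c})\mathbf{1}=0$ achieves the same reduction one step later at the cost of slightly heavier bookkeeping in $\mathcal{R}_\alpha$.
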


\smallskip
\begin{proof}[Proof of Proposition \ref{prop:a priori MS}]
We fix a multi-index $\alpha\in\N^3$ such that $\abs{\alpha} \leq s$. Recall that we have defined
$$\VV = \tilde{\mathbf{U}} - \frac{\langle \mathbf{\mathbf{c}},\tilde{\mathbf{U}}\rangle}{\langle \mathbf{\mathbf{c}},\mathbf{1}\rangle}\mathbf{1}.$$
We successively apply the $\alpha$-derivative to the transport equation $\eqref{perturbed MS mass}$, take the scalar product with the vector $\disp\pa{\frac{1}{\bar{c}_i}\partial^\alpha_x \tilde{c}_i}_{1\leq i \leq N}$, and integrate over $\T^3$. This yields, after integrating by parts,
\begin{equation}\label{a priori MS start}
\begin{split}
\frac{1}{2}\frac{\dd}{\dd t}\norm{\partial^\alpha_x\tilde{\mathbf{c}}}^2_{L^2_x\big(\mathbf{\bar{c}}^{-\frac{1}{2}}\big)} =& \int_{\T^3}\langle \nabla_x\partial^\alpha_x \tilde{\mathbf{c}} , \partial^\alpha_x\VV \rangle \dd x + \int_{\T^3} \langle \nabla_x\partial^\alpha_x \tilde{\mathbf{c}} , \partial^\alpha_x\pa{\tilde{\mathbf{c}}\bar{\mathbf{u}}} \rangle_{\mathbf{\bar{c}}^{-1}} \dd x 
\\[1mm]          & + \eps \int_{\T^3}\langle \nabla_x\partial^\alpha_x \tilde{\mathbf{c}} , \partial^\alpha_x\pa{\tilde{\mathbf{c}}\VV} \rangle_{\mathbf{\bar{c}}^{-1}} \dd x.
\end{split}
\end{equation}
We estimate these three terms separately. We first notice that summing over $i$ the gradient equations $\eqref{perturbed MS momentum}$ we obtain
$$\sum\limits_{i=1}^N \nabla_x \tilde{c}_i(t,x) = 0\quad \textrm{a.e. on }\ \R_+\times\T^3,$$
which means that $\nabla_x \tilde{\mathbf{c}}$ belongs to $\big(\Span(\mathbf{1})\big)^\perp$. Applying the $\alpha$-derivative to both sides of this relation then gives
$$\sum\limits_{i=1}^N \nabla_x \partial^\alpha_x \tilde{c}_i(t,x) = 0 \quad \textrm{a.e. on }\ \R_+\times\T^3,$$
from which we deduce that also
\begin{equation}\label{orthogonal higher derivative c}
\nabla_x\partial^\alpha_x\tilde{\mathbf{c}} \in \pab{\mbox{Span}(\mathbf{1})}^\bot \quad \textrm{a.e. on }\ \R_+\times\T^3.
\end{equation}

Thanks to the orthogonality $\eqref{orthogonal higher derivative c}$ of the higher derivatives and using the gradient relation $\eqref{perturbed MS momentum}$, the first term on the right-hand side of $\eqref{a priori MS start}$ becomes
\begin{equation*}
\begin{split}
\int_{\T^3}\langle \nabla_x\partial^\alpha_x \tilde{\mathbf{c}} , \partial^\alpha_x\VV \rangle \dd x & = \int_{\T^3}\langle \partial^\alpha_x\nabla_x \tilde{\mathbf{c}} , \partial^\alpha_x\tilde{\mathbf{U}} \rangle \dd x -  \frac{1}{C_0}  \int_{\T^3} \langle \nabla_x\partial^\alpha_x \tilde{\mathbf{c}} , \mathbf{1} \rangle \partial^\alpha_x \langle\mathbf{c},\tilde{\mathbf{U}}\rangle \dd x
\\[2mm] & = \int_{\T^3}\langle \partial^\alpha_x [A(\mathbf{c})\tilde{\mathbf{U}}] , \partial^\alpha_x\tilde{\mathbf{U}} \rangle \dd x.
\end{split}
\end{equation*}
We apply Proposition \ref{prop:estimate derivative MS matrix} with $\mathbf{X}=\partial^\alpha_x \tilde{\mathbf{U}}$ and use the mass conservation (Lemma \ref{lem:preservations}) and the spectral gap of $A(\mathbf{c})$ (Proposition \ref{prop:spectral gap MS matrix}) to recover the initial bound
\begin{equation*}
\begin{split}
\int_{\T^3} \langle  \nabla_x\partial^\alpha_x \tilde{\mathbf{c}},\partial^\alpha_x \VV \rangle \dd x \leq & \int_{\T^3} \langle A(\mathbf{c})\partial^\alpha_x\tilde{\mathbf{U}},\partial^\alpha_x\tilde{\mathbf{U}} \rangle \dd x
\\[5mm]  & + 2 N^2 \mu_A C_0 3^s \int_{\T^3} \norm{\partial^\alpha_x\tilde{\mathbf{U}}} \sum\limits_{\underset{|\alpha_1|\geq 1}{\alpha_1+\alpha_3 = \alpha}}\norm{\partial^{\alpha_1}_x\mathbf{c}}\norm{\partial^{\alpha_3}_x \tilde{\mathbf{U}}} \dd x
\\[3mm]  &  + N^2\mu_A 3^s \int_{\T^3}  \norm{\partial^\alpha_x\tilde{\mathbf{U}}} \sum\limits_{\underset{|\alpha_1|, |\alpha_2| \geq 1}{\alpha_1+\alpha_2 + \alpha_3 = \alpha}}\norm{\partial^{\alpha_1}_x \mathbf{c}}\norm{ \partial^{\alpha_2}_x \mathbf{c}}\norm{\partial^{\alpha_3}_x \tilde{\mathbf{U}}} \dd x
\end{split}
\end{equation*}
\begin{equation*}
\begin{split}
\leq & - \lambda_A\pa{\min\limits_{1\leq i \leq N}c_i}^2\norm{\partial^\alpha_x\tilde{\mathbf{U}}}_{L^2_x}^2
\\  & + 2 \eps N^2 \mu_A C_0 3^s \norm{\tilde{\mathbf{U}}}_{H^s_x}\cro{\int_{\T^3} \pa{\sum\limits_{\underset{|\alpha_1|\geq 1}{\alpha_1+\alpha_3 = \alpha}}\norm{\partial^{\alpha_1}_x\tilde{\mathbf{c}}}\norm{\partial^{\alpha_3}_x \tilde{\mathbf{U}}}}^2 \dd x}^{\frac{1}{2}}\ 
\\   & + \eps^2 N^2\mu_A 3^s \norm{\tilde{\mathbf{U}}}_{H^s_x}\cro{\int_{\T^3} \pa{\sum\limits_{\underset{|\alpha_1|, |\alpha_2| \geq 1}{\alpha_1+\alpha_2+\alpha_3 = \alpha}}\norm{\partial^{\alpha_1}_x \tilde{\mathbf{c}}}\norm{ \partial^{\alpha_2}_x \tilde{\mathbf{c}}}\norm{\partial^{\alpha_3}_x \tilde{\mathbf{U}}}}^2 \dd x}^{\frac{1}{2}},
\end{split}
\end{equation*}
where we have also used the Cauchy-Schwarz inequality and the fact that the $L^2_x$ norm of $\partial^\alpha_x\tilde{\mathbf{U}}$ is controlled by the $H^s_x$ norm of $\tilde{\mathbf{U}}$.
\par Recalling our choice $s>3$, in order to control the bi and tri-norm terms inside the integrals we use the continuous embedding of $H^{s/2}_x$ in $L^\infty_x$, which holds as soon as $s/2 > 3/2$. We detail our procedure for the tri-norm term, the bi-norm term being treated in the same way. Since $\alpha_1+\alpha_2+\alpha_3 = \alpha$, at most one of the $|\alpha_i|$ can be strictly larger than $\abs{\alpha}/2$. Hence, at least two $\abs{\alpha_i}$ are lower or equal to $\abs{\alpha}/2 \leq s/2$. We therefore split the tri-norm term into three sums as
\begin{equation*} 
\begin{split}
\pa{\sum\limits_{\underset{|\alpha_1|,|\alpha_2| \geq 1}{\alpha_1+\alpha_2+\alpha_3 = \alpha}}\norm{\partial^{\alpha_1}_x \tilde{\mathbf{c}}}\norm{ \partial^{\alpha_2}_x \tilde{\mathbf{c}}} \norm{\partial^{\alpha_3}_x \tilde{\mathbf{U}}}}^2 \leq  & \ \sum\limits_{\underset{\underset{|\alpha_1|, |\alpha_2| \leq \frac{s}{2}}{|\alpha_1|, |\alpha_2| \geq 1}}{\alpha_1+\alpha_2+\alpha_3 = \alpha}} \norm{\partial^{\alpha_1}_x \tilde{\mathbf{c}}}^2\norm{ \partial^{\alpha_2}_x \tilde{\mathbf{c}}}^2\norm{\partial^{\alpha_3}_x \tilde{\mathbf{U}}}^2  
\\[2mm] & + \sum\limits_{\underset{\underset{|\alpha_1|, |\alpha_3| \leq \frac{s}{2}}{|\alpha_1|, |\alpha_2| \geq 1}}{\alpha_1+\alpha_2+\alpha_3 = \alpha}} \norm{\partial^{\alpha_1}_x \tilde{\mathbf{c}}}^2\norm{ \partial^{\alpha_2}_x \tilde{\mathbf{c}}}^2\norm{\partial^{\alpha_3}_x \tilde{\mathbf{U}}}^2
\\[2mm]   & +   \sum\limits_{\underset{\underset{|\alpha_2|, |\alpha_3| \leq \frac{s}{2}}{|\alpha_1|, |\alpha_2| \geq 1}}{\alpha_1+\alpha_2+\alpha_3 = \alpha}} \norm{\partial^{\alpha_1}_x \tilde{\mathbf{c}}}^2\norm{ \partial^{\alpha_2}_x \tilde{\mathbf{c}}}^2\norm{\partial^{\alpha_3}_x \tilde{\mathbf{U}}}^2.
\end{split}
\end{equation*}
For any $\alpha_k$-derivative such that $|\alpha_k|\leq s/2$ we bound the corresponding factor by its $L^\infty_x$ norm and we exploit the mentioned embedding of $H^{s/2}_x$ in $L^\infty_x$ in order to recover the correct Sobolev norm. In the sequel $C_{\rm sob}$ will refer to any positive constant that appears when using the Sobolev embeddings. When integration over $\T^3$ is considered, the first sum then produces
\begin{equation*}
\begin{split}
\sum\limits_{\underset{\underset{|\alpha_1|, |\alpha_2| \leq \frac{s}{2}}{|\alpha_1|, |\alpha_2| \geq 1}}{\alpha_1+\alpha_2+\alpha_3 = \alpha}}\int_{\T^3}\norm{\partial^{\alpha_1}_x \tilde{\mathbf{c}}}^2 & \norm{ \partial^{\alpha_2}_x \tilde{\mathbf{c}}}^2    \norm{\partial^{\alpha_3}_x \tilde{\mathbf{U}}}^2 \dd x 
\\[-2mm]     & \leq  \sum\limits_{\underset{\underset{|\alpha_1|, |\alpha_2| \leq \frac{s}{2}}{|\alpha_1|, |\alpha_2| \geq 1}}{\alpha_1+\alpha_2+\alpha_3 = \alpha}}\norm{\partial^{\alpha_1}_x \tilde{\mathbf{c}}}_{L^\infty_x}^2\norm{ \partial^{\alpha_2}_x \tilde{\mathbf{c}}}_{L^\infty_x}^2\norm{\partial^{\alpha_3}_x \tilde{\mathbf{U}}}_{L^2_x}^2
\\[1mm]    &  \leq C_{\rm sob}\sum\limits_{\underset{\underset{|\alpha_1|, |\alpha_2| \leq \frac{s}{2}}{|\alpha_1|, |\alpha_2| \geq 1}}{\alpha_1+\alpha_2+\alpha_3 = \alpha}}\norm{\partial^{\alpha_1}_x \tilde{\mathbf{c}}}_{H^{s/2}_x}^2\norm{ \partial^{\alpha_2}_x \tilde{\mathbf{c}}}_{H^{s/2}_x}^2\norm{\partial^{\alpha_3}_x \tilde{\mathbf{U}}}_{L^2_x}^2
\\[1mm]     & \leq s^3 C_{\rm sob} \norm{\tilde{\mathbf{c}}}_{H^s_x}^4 \norm{\tilde{\mathbf{U}}}_{H^s_x}^2,
\end{split}
\end{equation*}
and the two others are dealt with in the same way. Consequently, the tri-norm term can be estimated as
\begin{equation} \label{Sobolev embedding}
\int_{\T^3} \pa{\sum\limits_{\underset{|\alpha_1|,|\alpha_2| \geq 1}{\alpha_1+\alpha_2+\alpha_3 = \alpha}}\norm{\partial^{\alpha_1}_x \tilde{\mathbf{c}}}\norm{ \partial^{\alpha_2}_x \tilde{\mathbf{c}}} \norm{\partial^{\alpha_3}_x \tilde{\mathbf{U}}}}^2 \dd x \leq 3s^3 C_{\rm sob} \norm{\tilde{\mathbf{c}}}_{H^s_x}^4\norm{\tilde{\mathbf{U}}}^2_{H^s_x}.
\end{equation}
Moreover, the previous Sobolev embedding also yields, for any $1\leq i\leq N$,
\begin{equation}\label{min c tilde}
c_i(t,x) \geq \bar{c}_i - \eps \norm{\tilde{\mathbf{c}}}_{L^\infty_x} \geq \min_{1\leq i\leq N} \bar{c}_i - \eps C_{\rm sob} \norm{\tilde{\mathbf{c}}}_{H^s_x}\quad \textrm{a.e. on }\ \R_+\times\T^3.
\end{equation}
We thus infer the first upper bound
\begin{equation}\label{a priori MS 1/3}
\begin{split}
\int_{\T^3}\langle \nabla_x\partial^\alpha_x \tilde{\mathbf{c}},\partial^\alpha_x \VV \rangle \dd x  \leq  -\lambda_A\Bigg(\min\limits_{1\leq i \leq N} & \bar{c}_i -  \eps C_{\rm sob} \norm{\tilde{\mathbf{c}}}_{H^s_x}\Bigg)^2 \norm{\partial^\alpha_x \tilde{\mathbf{U}}}^2_{L^2_x}
\\[2mm]    & + \eps 3^s s^2 N^2 \mu_A \pa{4 C_0 + 6 \eps \norm{\tilde{\mathbf{c}}}_{H^s_x}}\norm{\tilde{\mathbf{c}}}_{H^s_x}\norm{\tilde{\mathbf{U}}}_{H^{s}_x}^2.
\end{split}
\end{equation}

The second and third term on the right-hand side of $\eqref{a priori MS start}$ are handled more easily. As we did for establishing $\eqref{Sobolev embedding}$, we apply the Leibniz derivation rule and the Cauchy-Schwarz inequality, together with the Sobolev embedding that allows to distribute the $H^s_x$ norm to each factor of the products. In this way we obtain the estimates 
\begin{equation*}
\begin{split}
\int_{\T^3} \langle \nabla_x\partial^\alpha_x \tilde{\mathbf{c}}, \partial^\alpha_x\pa{\tilde{\mathbf{c}}\bar{\mathbf{u}}} \rangle_{\mathbf{\bar{c}}^{-1}} \dd x &\leq \frac{C_{\rm sob}}{\min\limits_{1\leq i\leq N}\bar{c}_i}\norm{\nabla_x\partial^\alpha_x\tilde{\mathbf{c}}}_{L^2_x}\norm{\tilde{\mathbf{c}}}_{H^s_x}\norm{\bar{\mathbf{u}}}_{H^s_x},
\\[2mm]      \int_{\T^3}\langle \nabla_x\partial^\alpha_x \tilde{\mathbf{c}}, \partial^\alpha_x\pa{\tilde{\mathbf{c}}\VV} \rangle_{\mathbf{\bar{c}}^{-1}} \dd x &\leq \frac{C_{\rm sob}}{\min\limits_{1\leq i\leq N}\bar{c}_i} \norm{\nabla_x\partial^\alpha_x\tilde{\mathbf{c}}}_{L^2_x}\norm{\tilde{\mathbf{c}}}_{H^s_x}\pa{1+\frac{1}{C_0}\norm{\mathbf{c}}_{H^s_x}}\norm{\tilde{\mathbf{U}}}_{H^s_x},
\end{split}
\end{equation*}
where we have used that $\frac{1}{\bar{c}_i}\leq \frac{1}{\min_{i} \bar{c}_i}$ for any $1\leq i\leq N$. In order to control the $L^2_x$ norm of $\nabla_x \partial^\alpha_x\tilde{\mathbf{c}}$, we exploit the gradient relation $\eqref{perturbed MS momentum}$. By similar computations to the ones providing the estimate of Proposition \ref{prop:estimate derivative MS matrix}, and thanks to the continuous Sobolev embedding~${H^{s/2}_x\hookrightarrow L^\infty_x}$, one infers
\begin{eqnarray}
\norm{\nabla_x\partial^\alpha_x\tilde{\mathbf{c}}}_{L^2_x}\leq \norm{\nabla_x\tilde{\mathbf{c}}}_{H^s_x} & = & \norm{A(\mathbf{c})\tilde{\mathbf{U}}}_{H^s_x} \nonumber
\\[2mm]    &  \leq  &  C_{\rm sob}(s^2+C_0s)\norm{\mathbf{c}}^2_{H^s_x} \norm{\tilde{\mathbf{U}}}_{H^s_x} \nonumber
\\[2mm]    &  \leq  &  2C_{\rm sob}(s^2+C_0s)\pa{C_0^2\abs{\T^3} + \eps^2\norm{\tilde{\mathbf{c}}}^2_{H^s_x}}\norm{\tilde{\mathbf{U}}}_{H^s_x} \nonumber
\\[2mm]   &  \leq & C_s\pa{1 + \eps^2\norm{\tilde{\mathbf{c}}}^2_{H^s_x}}\norm{\tilde{\mathbf{U}}}_{H^s_x},  \label{estimate c by U}
\end{eqnarray}
thanks to the basic estimates
\begin{gather*}
\int_{\T^3} \pa{\bar{c}_i + \eps \tilde{c}_i}^2\dd x  \leq 2\pa{ \int_{\T^3}\bar{c}_i^{\hspace{0.5mm }2} \dd x + \eps^2\int_{\T^3}\tilde{c}_i^{\hspace{0.5mm} 2}\dd x},
\\[3mm]  0 \leq \bar{c}_i \leq \sum\limits_{j=1}^N \bar{c}_j=C_0.
\end{gather*}
Since $\eps\leq 1$ and $\norm{\bar{\mathbf{u}}}_{H^s_x}\leq \sqrt{N} \norm{\bar{u}}_{H^s_x}$, we finally deduce the upper bounds
\begin{align}
\int_{\T^3} \langle \nabla_x\partial^\alpha_x \tilde{\mathbf{c}}, \partial^\alpha_x\pa{\tilde{\mathbf{c}}\bar{\mathbf{u}}} \rangle_{\mathbf{\bar{c}}^{-1}} \dd x & \leq \frac{\sqrt{N} C_s}{\min\limits_{1\leq i\leq N}\bar{c}_i}\norm{\tilde{\mathbf{c}}}_{H^s_x}\pa{1+\norm{\tilde{\mathbf{c}}}_{H^s_x}}^2\norm{\bar{u}}_{H^s_x}\norm{\tilde{\mathbf{U}}}_{H^s_x},    \label{a priori MS 2/3}
\\[4mm]   \int_{\T^3}\langle \nabla_x\partial^\alpha_x \tilde{\mathbf{c}}, \partial^\alpha_x\pa{\tilde{\mathbf{c}}\VV} \rangle_{\mathbf{\bar{c}}^{-1}}\dd x & \leq \frac{C_s}{\min\limits_{1\leq i\leq N}\bar{c}_i}\norm{\tilde{\mathbf{c}}}_{H^s_x}\pa{1+\norm{\tilde{\mathbf{c}}}_{H^s_x}}^3\norm{\tilde{\mathbf{U}}}^2_{H^s_x},   \label{a priori MS 3/3}
\end{align}
by accordingly increasing the value of the constant $C_s$.

\smallskip
To conclude, we gather $\eqref{a priori MS start}$ with the estimates $\eqref{a priori MS 1/3}$, $\eqref{a priori MS 2/3}$ and $\eqref{a priori MS 3/3}$, and we sum over all $\abs{\alpha}\leq s$. In this way, we obtain
\begin{equation*}
\begin{split}
\frac{1}{2}\frac{\dd}{\dd t} \norm{\tilde{\mathbf{c}}}^2_{\sobolevx{s}} \leq &\  - \lambda_A \pa{\min\limits_{1\leq i \leq N}\bar{c}_i}^2 \norm{\tilde{\mathbf{U}}}^2_{H^s_x} 
\\[2mm]  &\ + \frac{s^3 \sqrt{N} C_s}{\min\limits_{1\leq i\leq N}\bar{c}_i}\norm{\tilde{\mathbf{c}}}_{H^s_x}\norm{\bar{u}}_{H^s_x} \pa{1+\norm{\tilde{\mathbf{c}}}_{H^s_x}}^2 \norm{\tilde{\mathbf{U}}}_{H^s_x}
\\[1mm]   &\  + \eps \Bigg( 2 \lambda_A C_{\rm sob} \min_{1\leq i\leq N} \bar{c}_i + \eps \lambda_A C_{\rm sob}^2 +\frac{s^3 C_s}{\min\limits_{1\leq i\leq N}\bar{c}_i}  
\\[1mm] &  \qquad\quad + 3^s s^5 C_{\rm sob} N^2 \mu_A(4 C_0 + 6\eps) \Bigg) \norm{\tilde{\mathbf{c}}}_{H^s_x}\Big(1+\norm{\tilde{\mathbf{c}}}_{H^s_x}\Big)^3 \norm{\tilde{\mathbf{U}}}^2_{H^s_x}.
\end{split}
\end{equation*}
In order to close the estimate above, since $\bar{\mathbf{c}}$ is constant we first easily check that
$$\norm{\tilde{\mathbf{c}}}_{H^s_x} \leq \max\limits_{1\leq i \leq N}\bar{c}_i \norm{\tilde{\mathbf{c}}}_{\sobolevx{s}} \leq C_0 \norm{\tilde{\mathbf{c}}}_{\sobolevx{s}}.$$
Moreover, recalling Lemma \ref{lem:preservations}, we can apply the Poincar\'e inequality to $\tilde{\mathbf{c}}$, which has zero mean on the torus. Denoting $C_{\T^3}>0$ the Poincar\'e constant, we can thus compute
\begin{equation}\label{poincare}
\norm{\tilde{\mathbf{c}}}_{H^s_x} \leq C_{\T^3}\norm{\nabla_x\tilde{\mathbf{c}}}_{H^s_x} \leq C_{\T^3}C_s\pa{1+\eps^2\norm{\tilde{\mathbf{c}}}^2_{H^s_x}}\norm{\tilde{\mathbf{U}}}_{H^s_x},
\end{equation}
where we have also used $\eqref{estimate c by U}$.

\smallskip
We denote by $C_s$ any positive constant that only depends on $s$, $N$, $\lambda_A$, $\mu_A$, $\mathbf{\bar{c}}$, $C_{\rm sob}$, $C_0$ and $C_{\T^3}$. Thanks to the above estimates, we can consequently infer the validity of the bound

\begin{equation} \label{main estimate}
\begin{split}
\frac{1}{2}\frac{\dd}{\dd t}\norm{\tilde{\mathbf{c}}}^2_{\sobolevx{s}}  \leq - \Bigg[ & \lambda_A \pa{\min\limits_{1\leq i \leq N}\bar{c}_i}^2  - C_s \norm{\bar{u}}_{H^s_x} \pa{1+\norm{\tilde{\mathbf{c}}}_{\sobolevx{s}}}^4
\\[1mm]   & \ \ \ \qquad\qquad - C_s \norm{\tilde{\mathbf{c}}}_{\sobolevx{s}}\pa{1+\norm{\tilde{\mathbf{c}}}_{\sobolevx{s}}}^3\Bigg]\norm{\tilde{\mathbf{U}}}^2_{H^s_x}.
\end{split}
\end{equation}
Therefore, if $\norm{\tilde{\mathbf{c}}^\init}_{\sobolevx{s}} \leq \delta_s$ and $\norm{\bar{u}}_{H^s_x} \leq \delta_s$ for almost any~${t\geq 0}$, where $\delta_s >0$ is chosen such that
\begin{equation}\label{deltas}
C_s\delta_s\pabb{(1+\delta_s)^4+(1+\delta_s)^3} \leq \frac{\lambda_A \pa{\min\limits_{1\leq i \leq N}\bar{c}_i}^2}{2},
\end{equation}
we ensure that the $\sobolevx{s}$ norm of $\tilde{\mathbf{c}}$ keeps diminishing and satisfies
\begin{equation}\label{a priori MS final}
\frac{\dd}{\dd t}\norm{\tilde{\mathbf{c}}}^2_{\sobolevx{s}} \leq - \lambda_A \pa{\min\limits_{1\leq i \leq N}\bar{c}_i}^2 \norm{\tilde{\mathbf{U}}}^2_{H^s_x}\quad \textrm{for a.e. } t\geq 0.
\end{equation}
Moreover, since the Poincar\'e inequality $\eqref{poincare}$ tells us that the norm of $\tilde{\mathbf{U}}$ controls the one of $\tilde{\mathbf{c}}$, we recover the estimate
\begin{equation*}
\frac{\dd}{\dd t}\norm{\tilde{\mathbf{c}}}^2_{\sobolevx{s}} \leq  -\frac{\lambda_A \pa{\min\limits_{1\leq i \leq N}\bar{c}_i}^2}{ C^2_{\T^3}C^2_s(1+\delta_s^2)^2}\norm{\tilde{\mathbf{c}}}^2_{H^s_x}  \leq   -\frac{\lambda_A  \pa{\min\limits_{1\leq i \leq N}\bar{c}_i}^3}{C^2_{\T^3}C^2_s(1+\delta_s^2)^2}\norm{\tilde{\mathbf{c}}}^2_{\sobolevx{s}}.
\end{equation*}
Setting
\begin{equation}\label{lambdas}
\lambda_s = \frac{\lambda_A \pa{\min\limits_{1\leq i \leq N}\bar{c}_i}^3}{2 C^2_{\T^3}C^2_s(1+\delta_s^2)^2},\hspace{5cm}
\end{equation}
Gr\"onwall's lemma finally tells us that for a.e. $t\geq 0$
$$\norm{\tilde{\mathbf{c}}}_{\sobolevx{s}} \leq  e^{-\lambda_s t}\norm{\tilde{\mathbf{c}}^\init}_{\sobolevx{s}},$$
and we also recover
\begin{equation}\label{Cs}
\norm{\tilde{\mathbf{U}}}_{H^{s-1}_x} = \norm{A(\mathbf{c})^{-1} \nabla_x\tilde{\mathbf{c}}}_{H^{s-1}_x} \leq \tilde{C}_s\norm{\nabla_x\tilde{\mathbf{c}}}_{H^{s-1}_x} \leq \tilde{C}_s\norm{\tilde{\mathbf{c}}}_{H^{s}_x}\leq C_s\norm{\tilde{\mathbf{c}}}_{\sobolevx{s}},
\end{equation}
by simply adjusting the value of $C_s$. The constant $C_s = C_s(C_0,\lambda_A,\mu_A,s,\delta_s,\bar{\mathbf{c}})>0$ is obtained by inverting $A(\mathbf{c})$ and repeating the previous computations, \textit{via} the continuous Sobolev embedding already mentioned. In particular, note that for our choice of $\delta_s$ one sees from $\eqref{min c tilde}$ that $\mathbf{c}$ does not vanish anywhere and there is therefore no singularity in $A(\mathbf{c})^{-1}$. 

\smallskip
The last estimate on the integral of $\norm{\tilde{\mathbf{U}}}^2_{H^s_x}$ is a direct application of Gr\"onwall's lemma from $\eqref{a priori MS final}$. This concludes the proof.
\end{proof}

\medskip
Before going into details in the proofs of existence and uniqueness, we present here another result which establishes that the positivity of $\mathbf{c}$ is obtained \textit{a priori}. This will help the reader in clarifying the last statement we gave in the previous proof, about the invertibility of $A(\mathbf{c})$. Moreover, note that ensuring the positivity of $\mathbf{c}$ \textit{a priori} is crucial, since it will leave us free on the choice of the iterative scheme to be used in the next section, when constructing the solution of the system \eqref{perturbed MS mass}--\eqref{perturbed MS momentum}.

\smallskip
\begin{lemma}\label{lem:positivity}
Consider an initial datum $(\tilde{\mathbf{c}}^\init,\tilde{\mathbf{U}}^\init)$ satisfying the assumptions of Theorem \ref{theo:Cauchy MS orthogonal} with $\delta_s$ sufficiently small.  If $(\tilde{\mathbf{c}},\tilde{\mathbf{U}})$ is a solution of \eqref{perturbed MS mass}--\eqref{perturbed MS momentum} with initial datum $(\tilde{\mathbf{c}}^\init,\tilde{\mathbf{U}}^\init)$, then, for almost any $(t,x)\in\R_+\times\T^3$ the vector $\bar{\mathbf{c}}+\eps\tilde{\mathbf{c}}(t,x)$ is positive.
\end{lemma}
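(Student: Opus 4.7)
The plan is to deduce the pointwise positivity of $\bar{\mathbf{c}} + \varepsilon \tilde{\mathbf{c}}$ directly from the exponential decay furnished by Proposition \ref{prop:a priori MS}, combined with the continuous Sobolev embedding $H^s(\T^3) \hookrightarrow L^\infty(\T^3)$, which is available because $s > 3$. Since the lemma assumes at the outset that $(\tilde{\mathbf{c}}, \tilde{\mathbf{U}})$ is a solution satisfying the hypotheses of Theorem \ref{theo:Cauchy MS orthogonal}, the a priori estimates of Proposition \ref{prop:a priori MS} are available for free and the argument becomes essentially a smallness observation.

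More precisely, I would first invoke the decay
\begin{equation*}
\norm{\tilde{\mathbf{c}}(t)}_{\sobolevx{s}} \leq e^{-\lambda_s t}\, \norm{\tilde{\mathbf{c}}^\init}_{\sobolevx{s}} \leq \norm{\tilde{\mathbf{c}}^\init}_{\sobolevx{s}} \leq \delta_s \quad \textrm{for a.e.\ } t\geq 0.
\end{equation*}
Because $\bar{\mathbf{c}}$ is a positive constant vector, this weighted norm is equivalent to the standard $H^s_x$ norm on each component, yielding $\norm{\tilde{c}_i(t)}_{H^s_x} \leq \sqrt{\bar{c}_i}\, \norm{\tilde{\mathbf{c}}(t)}_{\sobolevx{s}}$ for every $1\leq i\leq N$. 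Applying the Sobolev embedding componentwise then gives, for almost every $(t,x)\in\R_+\times\T^3$,
\begin{equation*}
\abs{\tilde{c}_i(t,x)} \leq C_{\rm sob}\, \norm{\tilde{c}_i(t)}_{H^s_x} \leq C_{\rm sob}\sqrt{\bar{c}_i}\, \delta_s,
\end{equation*}
where $C_{\rm sob} > 0$ is the embedding constant already employed in the proof of Proposition \ref{prop:a priori MS}.

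Consequently, using $\varepsilon \leq 1$,
\begin{equation*}
\bar{c}_i + \varepsilon\, \tilde{c}_i(t,x) \geq \bar{c}_i - C_{\rm sob}\sqrt{\bar{c}_i}\, \delta_s,
\end{equation*}
and it suffices to verify that $\delta_s < \sqrt{\bar{c}_i}/C_{\rm sob}$ for every $1\leq i\leq N$ to conclude. The value of $\delta_s$ fixed in \eqref{deltas} already imposes a smallness condition; if this constraint does not automatically yield the above inequality, I simply further decrease $\delta_s$ to enforce it, which does not damage any previous argument since those only rely on upper bounds on $\delta_s$. I do not foresee any genuine obstacle: the positivity is a direct $L^\infty$ consequence of the $H^s$ smallness propagated by the a priori estimate, and it is needed only insofar as it retrospectively justifies the invertibility of $A(\mathbf{c})$ and the lower bound on $\min_i c_i$ that were implicitly used to derive the decay in Proposition \ref{prop:a priori MS}; the whole construction is thus self-consistent provided $\delta_s$ is chosen small enough.
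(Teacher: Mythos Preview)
Your proof is correct and follows essentially the same approach as the paper: invoke the \textit{a priori} decay of Proposition \ref{prop:a priori MS}, apply the Sobolev embedding $H^s_x\hookrightarrow L^\infty_x$, and shrink $\delta_s$ if necessary so that $\eps|\tilde{c}_i|$ cannot exceed $\bar{c}_i$. The only minor slip is the inequality $\norm{\tilde{\mathbf{c}}^\init}_{\sobolevx{s}}\leq\delta_s$, since the smallness hypothesis in Theorem \ref{theo:Cauchy MS orthogonal} is stated for the unweighted norm $\norm{\tilde{\mathbf{c}}^\init}_{H^s_x}$; this only costs an extra factor $(\min_i\bar{c}_i)^{-1/2}$ and is absorbed into the final smallness condition on $\delta_s$, so the argument goes through unchanged.
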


\begin{proof}[Proof of Lemma \ref{lem:positivity}]
The proof follows straightforwardly in perturbative regime in regular Sobolev spaces. Indeed, the previous \textit{a priori} estimate and Sobolev embedding proves that
$$\norm{\tilde{\mathbf{c}}}_{L^\infty_{t,x}} \leq C_{Sob} \norm{\tilde{\mathbf{c}}^{in}}_{H^s_{x}} \leq C'_{Sob}\delta_s.$$
Therefore one could modify the definition of $\delta_s$ so that also
$$ C'_{Sob}\delta_s \leq \min\limits_{1\leq i \leq N} \frac{\bar{c}_i}{2} $$
and thus $\mathbf{c}(t,x)$ is positive a.e. on $\R_+\times\T^3$.
\end{proof} 

\medskip
\noindent \textbf{Step 3 -- Existence and uniqueness of the couple $(\tilde{\mathbf{c}},\tilde{\mathbf{U}})$.} We now have all the tools needed in order to construct our Cauchy theory for the couple $(\tilde{\mathbf{c}},\tilde{\mathbf{U}})$. We shall first present the existence result and then prove the uniqueness of the constructed solution. 

\begin{prop}\label{lem:existence MS}
Let $s>3$ be an integer and consider a triple $(\tilde{\mathbf{c}}^\init,\tilde{\mathbf{U}}^\init,\bar{u})$ satisfying the assumptions of Theorem \ref{theo:Cauchy MS orthogonal}. There exists $\delta_s>0$ such that, for all~${\eps\in (0,1]}$, there exists a global weak solution $(\tilde{\mathbf{c}},\tilde{\mathbf{U}})\in L^\infty\pab{\R_+; H^s(\T^3)}\times L^\infty\pab{\R_+; H^{s-1}(\T^3)}$ of the system \eqref{perturbed MS mass}--\eqref{perturbed MS momentum}, with initial datum $(\tilde{\mathbf{c}}^\init,\tilde{\mathbf{U}}^\init)$.
\end{prop}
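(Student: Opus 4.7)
The plan is a classical iterative linearization scheme, closed by the uniform \emph{a priori} estimates of Proposition \ref{prop:a priori MS} and a compactness argument. The starting observation is that, as soon as the mass compatibility $\sum_i \tilde{c}_i = 0$ is propagated in time (Lemma \ref{lem:preservations}), the vector $\nabla_x \tilde{\mathbf{c}}$ lies in $\pab{\Span(\mathbf{1})}^\perp$, so the gradient relation $\eqref{perturbed MS momentum}$ can be inverted through the pseudoinverse of Proposition \ref{prop:A-1}, giving $\tilde{\mathbf{U}} = A(\bar{\mathbf{c}} + \eps \tilde{\mathbf{c}})^{-1} \nabla_x \tilde{\mathbf{c}}$. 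Substituted into $\eqref{perturbed MS mass}$, this turns the perturbed Maxwell--Stefan system into a single quasilinear parabolic equation for the sole unknown $\tilde{\mathbf{c}}$, whose principal part is coercive on $\pab{\Span(\mathbf{1})}^\perp$ thanks to the spectral gap of $-A^{-1}$.

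Based on this observation, I would introduce a sequence $(\tilde{\mathbf{c}}^{(n)}, \tilde{\mathbf{U}}^{(n)})_{n\geq 0}$ by setting $\tilde{\mathbf{c}}^{(0)} \equiv 0$ and solving, at step $n+1$, the \emph{linear} system obtained by freezing at $\tilde{\mathbf{c}}^{(n)}$ every nonlinear occurrence of $\tilde{\mathbf{c}}$ in the coefficients, namely
\begin{gather*}
\partial_t \tilde{\mathbf{c}}^{(n+1)} + \bar{\mathbf{c}}\, \nabla_x \cdot \mathbf{V}^{(n+1)} + \bar{u} \cdot \nabla_x \tilde{\mathbf{c}}^{(n+1)} + \eps \nabla_x \cdot \pab{\tilde{\mathbf{c}}^{(n)} \mathbf{V}^{(n+1)}} = 0, \\[1mm]
\nabla_x \tilde{\mathbf{c}}^{(n+1)} = A\pab{\bar{\mathbf{c}} + \eps \tilde{\mathbf{c}}^{(n)}} \tilde{\mathbf{U}}^{(n+1)},
\end{gather*}
with $\mathbf{V}^{(n+1)} = \tilde{\mathbf{U}}^{(n+1)} - C_0^{-1}\scalprod{\bar{\mathbf{c}}+\eps \tilde{\mathbf{c}}^{(n)},\tilde{\mathbf{U}}^{(n+1)}}\mathbf{1}$ and initial datum $\tilde{\mathbf{c}}^{(n+1)}(0,\cdot) = \tilde{\mathbf{c}}^\init$. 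After eliminating $\tilde{\mathbf{U}}^{(n+1)}$ by applying the pseudoinverse of $A(\bar{\mathbf{c}} + \eps \tilde{\mathbf{c}}^{(n)})$ (well-defined by Lemma \ref{lem:positivity} as long as $\tilde{\mathbf{c}}^{(n)}$ is small in $H^s_x$), one obtains a linear second-order parabolic equation for $\tilde{\mathbf{c}}^{(n+1)}$, whose existence and uniqueness in $L^\infty_t H^s_x$ follow from a standard Galerkin approximation in $H^s(\T^3)$.

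The key step is then to verify that the energy estimates of Proposition \ref{prop:a priori MS} survive intact within this scheme. Since the bounds of Section \ref{sec:MS matrix} on $A(\mathbf{c})$ and its pseudoinverse depend on $\mathbf{c}$ only through $\mathbf{c}\geq 0$, $\min_i c_i$, and $\scalprod{\mathbf{c},\mathbf{1}}$, repeating the energy computation with the frozen coefficient $\tilde{\mathbf{c}}^{(n)}$ and the unknown $(\tilde{\mathbf{c}}^{(n+1)}, \tilde{\mathbf{U}}^{(n+1)})$ produces the same differential inequality, with mixed-index remainder terms that are absorbed provided $\tilde{\mathbf{c}}^{(n)}$ already satisfies the smallness bound. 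An induction on $n$, possibly after further reducing $\delta_s$, yields the uniform controls
\[ \norm{\tilde{\mathbf{c}}^{(n)}}_{L^\infty_t H^s_x} \leq \delta_s, \qquad \norm{\tilde{\mathbf{U}}^{(n)}}_{L^\infty_t H^{s-1}_x} + \norm{\tilde{\mathbf{U}}^{(n)}}_{L^2_t H^s_x} \leq C \norm{\tilde{\mathbf{c}}^\init}_{\sobolevx{s}}, \]
together with exponential decay in time, uniformly in $\eps \in (0,1]$.

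To conclude, I would pass to the limit $n\to\infty$. Weak-$*$ convergence, up to subsequences, of $\tilde{\mathbf{c}}^{(n)}$ in $L^\infty_t H^s_x$ and of $\tilde{\mathbf{U}}^{(n)}$ in $L^\infty_t H^{s-1}_x$ follows immediately from the uniform bounds, while the mass equation provides a bound on $\partial_t \tilde{\mathbf{c}}^{(n)}$ in $L^2_t H^{s-2}_x$; the Aubin--Lions lemma then upgrades this to strong convergence of $\tilde{\mathbf{c}}^{(n)}$ in $L^2_{\loc}(\R_+; H^{s-1}(\T^3))$, which suffices to identify the limit in every nonlinear term, each of which is a product of $\tilde{\mathbf{c}}^{(n)}$ (or a smooth function of it, such as $A(\bar{\mathbf{c}} + \eps \tilde{\mathbf{c}}^{(n)})$ and its pseudoinverse) with a weakly converging factor. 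The main obstacle is precisely the stability of the \textit{a priori} estimates under the linearization, since the proof of Proposition \ref{prop:a priori MS} intertwines the transport and gradient equations in a delicate way through the substitution $\tilde{\mathbf{U}} = A^{-1}\nabla_x\tilde{\mathbf{c}}$; once one checks that the mismatched-index version of that proof still closes, the limit $(\tilde{\mathbf{c}}, \tilde{\mathbf{U}})$ is the desired weak solution of \eqref{perturbed MS mass}--\eqref{perturbed MS momentum}.
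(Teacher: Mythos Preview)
Your proposal is correct and follows the same overall strategy as the paper: reduce \eqref{perturbed MS mass}--\eqref{perturbed MS momentum} to a quasilinear parabolic equation in $\tilde{\mathbf{c}}$ via $\tilde{\mathbf{U}}=A(\mathbf{c})^{-1}\nabla_x\tilde{\mathbf{c}}$, build an iterative linearization, propagate the energy estimates of Proposition~\ref{prop:a priori MS} through the iteration, and pass to the limit by compactness (Aubin--Lions). The difference lies in the precise linearization. You keep the transport term $\bar{u}\cdot\nabla_x\tilde{\mathbf{c}}^{(n+1)}$ at the new step, so that after Poincar\'e and the gradient relation every remainder is quadratic in $\norm{\tilde{\mathbf{U}}^{(n+1)}}_{H^s_x}$ with a small prefactor $\delta_s$; the differential inequality then closes \emph{globally in time} at each step, and the induction gives directly $\norm{\tilde{\mathbf{c}}^{(n)}}_{\sobolevx{s}}\leq \delta_s e^{-\lambda_s t}$ on all of $\R_+$. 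The paper instead freezes the transport term as $\bar{u}\cdot\nabla_x\tilde{\mathbf{c}}^{(n)}$, which produces a genuine source $C\delta_s\norm{\tilde{\mathbf{c}}^{(n)}}^2_{\sobolevx{s}}$ in the Gr\"onwall estimate; this grows linearly in $t$, so the paper works on a fixed interval $[0,T_0]$ (with $T_0$ chosen so that the bound inflates only from $\delta_s$ to $2\delta_s$), passes to the limit there, and then restarts at $T_0$ using the \emph{a priori} decay of the limit solution to globalize. Your choice of linearization is slightly more economical, as it avoids this local--to--global step; the paper's choice has the minor advantage that the iterate equation is ``more linear'' (the first-order part is a pure forcing), but both routes are standard and lead to the same conclusion.
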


\begin{proof}[Proof of Proposition \ref{lem:existence MS}]
The proof is standard and is based on an iterative scheme, where we first construct a solution on a well-chosen time interval $[0,T_0]$, and we then show that this interval can be extended to $[0,+\infty)$. Note however that one has to be careful with the estimates, since the conservation of the exact exponential decay rate is crucial. The underlying mechanism lies on the fact that our problem is actually quasilinear parabolic for small initial data. Indeed, noticing that 
$$ \tilde{\mathbf{U}} = A(\mathbf{c})^{-1}\nabla_x\tilde{\mathbf{c}}, $$
we solely have to solve
\begin{equation*}
\begin{split}
\partial_t \tilde{\mathbf{c}}  +  & \mathbf{\bar{c}}\nabla_x\cdot\pa{A(\mathbf{c})^{-1}\nabla_x\tilde{\mathbf{c}}- \frac{\langle \mathbf{c},A(\mathbf{c})^{-1}\nabla_x\tilde{\mathbf{c}} \rangle}{\langle \mathbf{\mathbf{c},\mathbf{1}\rangle}}\mathbf{1}} + \bar{u}\cdot \nabla_x\tilde{\mathbf{c}}
\\[1mm]   &  \quad\quad\quad+ \eps \nabla_x\cdot\pa{\tilde{\mathbf{c}}\pa{A(\mathbf{c})^{-1}\nabla_x\tilde{\mathbf{c}} - \frac{\langle \mathbf{c},A(\mathbf{c})^{-1}\nabla_x\tilde{\mathbf{c}} \rangle}{\langle \mathbf{c},\mathbf{1}\rangle}\mathbf{1}}} = 0.
\end{split}
\end{equation*}
From Proposition \ref{prop:A-1} we see that the higher order term is of order $2$, symmetric in a $\tilde{c}/\bar{c}$ reformulation and negative for $\mathbf{c} >0$, which makes us tackle this equation like a quasilinear parabolic one.

\smallskip
We initially set $\tilde{\mathbf{c}}^{(0)} = \tilde{\mathbf{c}}^\init $ and suppose that an $N$-vector function $\tilde{\mathbf{c}}^{(n)}$ belonging to $L^\infty\pab{0,T_0;H^s(\T^3)}$ is given, satisfying 
\begin{equation}\label{assumption iterative}
\norm{\tilde{\mathbf{c}}^{(n)}}_{\sobolevx{s}} \leq \delta_s e^{-\lambda_s t}, \qquad \sum\limits_{i=1}^N \tilde{c}^{\hspace{0.6mm}(n)}_i(t,x)=0\quad \textrm{a.e. on }\ (0,T_0)\times\T^3.
\end{equation}
For $s >3$, the Sobolev embedding $H^s_x \hookrightarrow L^\infty_x$ makes applicable standard hyperbolic-parabolic methods on the torus \cite{Kawashima84} which raise the existence of a solution $\tilde{\mathbf{c}}^{(n+1)}$ belonging to $L^2\pab{0,T_0;H^1(\T^3)}$ to the following linear equation
\begin{multline}\label{iterative scheme}
\partial_t \tilde{\mathbf{c}}^{(n+1)} + \bar{u}\cdot \nabla_x\tilde{\mathbf{c}}^{(n+1)}
\\[1mm]    + \nabla_x\cdot\pa{\mathbf{c}^{(n)}\pa{A(\mathbf{c}^{(n)})^{-1}\nabla_x\tilde{\mathbf{c}}^{(n+1)} - \frac{\langle \mathbf{c}^{(n)},A(\mathbf{c}^{(n)})^{-1} \nabla_x\tilde{\mathbf{c}}^{(n+1)} \rangle}{\langle \mathbf{c}^{(n)},\mathbf{1}\rangle}\mathbf{1}}} = 0,
\end{multline}
with initial datum $\tilde{\mathbf{c}}^\init$. Note that summing the above system of equations over $1\leq i\leq N$ yields 
$$\sum\limits_{i=1}^N \tilde{c}^{\hspace{0.5mm}(n+1)}_i(t,x)= \sum\limits_{i=1}^N \tilde{c}^{\hspace{0.5mm}(n+1)}_i(0,x) = 0\quad \textrm{a.e. on }\ (0,T_0)\times\T^3,$$
which shows, thanks to Proposition \ref{prop:A-1}, that $\pab{\mbox{Span}(\mathbf{1})}^\bot$ is stable for $\eqref{iterative scheme}$ and so the $N$-vector function $A(\mathbf{c}^{(n)})^{-1}\nabla_x\tilde{\mathbf{c}}^{(n+1)}$ is well-defined for almost every time $t\in (0,T_0)$.

\smallskip
If we now define $\tilde{\mathbf{U}}^{(n+1)} = A(\mathbf{c}^{(n)})^{-1}\nabla_x\tilde{\mathbf{c}}^{(n+1)}$, the specific choice of our time-discretisation $\eqref{iterative scheme}$ allows to follow the computations carried out to obtain the \textit{a priori} estimates in Proposition \ref{prop:a priori MS} and leads to the equivalent of inequality $\eqref{main estimate}$, which reads
\begin{multline*}
\frac{1}{2}\frac{\dd}{\dd t}\norm{\tilde{\mathbf{c}}^{(n+1)}}^2_{\sobolevx{s}}  \leq - \Bigg[ \lambda_A \pa{\min\limits_{1\leq i \leq N}\bar{c}_i}^2  - C_s \norm{\bar{u}}_{H^s_x} \pa{1+\norm{\tilde{\mathbf{c}}^{(n)}}_{\sobolevx{s}}}^4
\\[1mm]  - C_s \norm{\tilde{\mathbf{c}}^{(n)}}_{\sobolevx{s}}\pa{1+\norm{\tilde{\mathbf{c}}^{(n)}}_{\sobolevx{s}}}^3\Bigg]\norm{\tilde{\mathbf{U}}^{(n+1)}}^2_{H^s_x}.
\end{multline*}
Recall in particular the treatment of the term $\bar{u}\cdot \nabla_x\tilde{\mathbf{c}}^{(n+1)}$ which is done in $\eqref{a priori MS 2/3}$ and the use of Poincar\'e inequality $\eqref{poincare}$ which is now applied to $\tilde{\mathbf{c}}^{(n+1)}$ to recover the correct exponent for the Sobolev norm of $\tilde{\mathbf{U}}^{(n+1)}$ in the above estimate. Since $\norm{\bar{u}}_{H^s_x}\leq \delta_s$ and by construction also $\norm{\tilde{\mathbf{c}}^{(n)}}_{\sobolevx{s}} \leq \delta_s$, the definition of $\delta_s$ given in $\eqref{deltas}$ provides the same bounds that lead, through Gr\"onwall's lemma, to the final estimate
$$ \norm{\tilde{\mathbf{c}}^{(n+1)}}_{\sobolevx{s}} \leq  \norm{\tilde{\mathbf{c}}^\init}_{\sobolevx{s}} e^{-\lambda_s t} \leq \delta_s e^{-\lambda_s t}. $$
This proves that $\tilde{\mathbf{c}}^{(n+1)}$ belongs to $L^\infty\pab{0,T_0;H^s(\T^3)}$ and satisfies the iterative assumptions $\eqref{assumption iterative}$.

\medskip
By induction, we thus construct a sequence $\pa{\tilde{\mathbf{c}}^{(n)}}_{n\in\N}$ defined a.e. on ${(0,T_0)\times\T^3}$, belonging to $\pab{\mbox{Span}(\mathbf{1})}^\bot$, and bounded by $\delta_s$ in $L^\infty\pab{0,T_0;H^s(\T^3)}$. Moreover, the iterative equation $\eqref{iterative scheme}$ gives an explicit formula for $\partial_t \tilde{\mathbf{c}}^{(n+1)}$ in terms of $\tilde{\mathbf{c}}^{(n)}$, $\tilde{\mathbf{c}}^{(n+1)}$ and $\bar{u}$. Again, the continuous Sobolev embedding $H^{s}_x\hookrightarrow L^\infty_x$ for $s >3/2$ and Proposition \ref{prop:A-1} raise the existence of a polynomial $P$ in two variables, with coefficients only depending on $s$, $\mathbf{\bar{c}}$, $\norm{\bar{u}}_{H^{s}_x}$, $\lambda_A$ and $\mu_A$, such that
$$ \norm{\partial_t\tilde{\mathbf{c}}^{(n+1)}}_{L^2_x}\leq P\pa{\norm{\tilde{\mathbf{c}}^{(n)}}_{H^s_x},\norm{\tilde{\mathbf{c}}^{(n+1)}}_{H^s_x}} \leq P(\delta_s,\delta_s)\quad \textit{for a.e. } t \geq 0. $$
This shows that $\pa{\partial_t\tilde{\mathbf{c}}^{(n)}}_{n\in\N}$ is bounded in $L^\infty\pab{0,T_0;L^2(\T^3)}$, uniformly with respect to $n\in\N$.
\par Therefore, choosing $0<s'<s-2$, by Sobolev embeddings, there exists an $N$-vector function $\tilde{\mathbf{c}}^\infty\in L^\infty\pab{0,T_0;H^{s}(\T^3)}$ such that, up to a subsequence,
\begin{enumerate}
\item $\pa{\tilde{\mathbf{c}}^{(n)}}_{n\in\N}$ converges, weakly-* in $L^\infty(0,T_0)$ and weakly in $H^s_x$, to $\tilde{\mathbf{c}}^\infty$,\\[-0.5mm]
\item $\pa{\tilde{\mathbf{c}}^{(n)}}_{n\in\N}$, $\pa{\nabla_x\tilde{\mathbf{c}}^{(n)}}_{n\in\N}$ and $\pa{\nabla_x\nabla_x\tilde{\mathbf{c}}^{(n)}}_{n\in\N}$ converge weakly-* in $L^\infty(0,T_0)$ and strongly in $H^{s'}_x$, \\[-1mm]
\item $\pa{\partial_t \tilde{\mathbf{c}}^{(n)}}_{n\in\N}$ converges weakly-* in $L^\infty(0,T_0)$ and weakly in $L^2_x$.
\end{enumerate}
Integrating our scheme $\eqref{iterative scheme}$ against test functions, we can then pass to the limit as $n$ goes to $+\infty$ (the nonlinear terms being bounded and dealt with thanks to the strong convergences in $H^{s'}_x$), and we see that $\tilde{\mathbf{c}}^\infty$ is a weak solution to
\begin{equation*}
\begin{split}
\partial_t \tilde{\mathbf{c}}^\infty +& \mathbf{\bar{c}}\nabla_x\cdot\pa{A(\mathbf{c}^\infty)^{-1}\nabla_x\tilde{\mathbf{c}}^\infty - \frac{\langle \mathbf{c}^\infty,A(\mathbf{c}^\infty)^{-1} \nabla_x\tilde{\mathbf{c}}^\infty) \rangle}{\langle \mathbf{c}^\infty,\mathbf{1}\rangle}\mathbf{1}} + \bar{u}\cdot \nabla_x\tilde{\mathbf{c}}^\infty
\\[1mm]      &    \quad\quad\quad  + \eps \nabla_x\cdot\pa{\tilde{\mathbf{c}}^\infty\pa{A(\mathbf{c}^\infty)^{-1}\nabla_x\tilde{\mathbf{c}}^\infty - \frac{\langle \mathbf{c}^\infty,A(\mathbf{c}^\infty)^{-1} \nabla_x\tilde{\mathbf{c}}^\infty) \rangle}{\langle \mathbf{c}^\infty,\mathbf{1}\rangle}\mathbf{1}}} = 0.
\end{split}
\end{equation*}
Denoting $\tilde{\mathbf{U}}^\infty = A(\mathbf{c}^\infty)^{-1} \nabla_x\tilde{\mathbf{c}}^\infty$, this proves that $(\tilde{\mathbf{c}}^\infty,\tilde{\mathbf{U}}^\infty)$ is a weak solution to the system \eqref{perturbed MS mass}--\eqref{perturbed MS momentum}, belonging to $L^\infty\pab{0,T_0;H^s(\T^3)}\times L^\infty\pab{0,T_0;H^{s-1}(\T^3)}$. In particular, looking at equations \eqref{perturbed MS mass}--\eqref{perturbed MS momentum}, by means of the continuous embedding of $H^{s/2}_x$ in~$L^\infty_x$ one easily checks that $(\partial_t \tilde{\mathbf{c}}^\infty,\partial_t \tilde{\mathbf{U}}^\infty)$ belongs to $L^\infty\pab{0,T_0;L^2(\T^3)}\times L^\infty\pab{0,T_0;L^2(\T^3)}$ as soon as $s \geq 4$. Applying the Aubin-Lions-Simon theorem (see for example \cite[Theorem II.5.16]{BoyFab}), we thus also ensure that $(\tilde{\mathbf{c}}^\infty,\tilde{\mathbf{U}}^\infty)$ belongs to $C^0\pab{[0,T_0]; H^{s-1}(\T^3)}\times C^0\pab{[0,T_0]; H^{s-2}(\T^3)}$ for any $s>3$.

\smallskip
Therefore, using the continuity of $\tilde{\mathbf{c}}^\infty$, we can finally conclude thanks to the \textit{a priori} estimates established in Proposition \ref{prop:a priori MS}, which state that $\norm{\tilde{\mathbf{c}}^\infty(T_0)}_{H^s_x}\leq \delta_s$. Indeed, we can restart our scheme at $T_0$ from this initial condition and we can obtain a solution on the time interval $[T_0,2T_0]$. Again, using the continuity of $\tilde{\mathbf{c}}^\infty$ with respect to $t\in [T_0,2T_0]$ and Proposition \ref{prop:a priori MS}, the corresponding sequence will be bounded by $\delta_s$ at $2T_0$, and by induction we can construct a weak solution of \eqref{perturbed MS mass}--\eqref{perturbed MS momentum} on $[0,+\infty)$.
\end{proof}

\smallskip
In the next result we conclude by recovering the uniqueness of the solution to the orthogonal system $\eqref{perturbed MS mass}-\eqref{perturbed MS momentum}$. We remind the reader that this property has to be understood in a perturbative sense, since we are only able to prove the uniqueness of the fluctuations~${(\tilde{\mathbf{c}},\tilde{\mathbf{U}})}$ around the macroscopic equilibrium state $(\bar{\mathbf{c}},\mathbf{0})$. 
 
\smallskip
\begin{prop}\label{lem:uniqueness MS}
Let $s>3$ be an integer, and consider a couple $(\tilde{\mathbf{c}}^\init,\tilde{\mathbf{U}}^\init)$ satisfying the assumptions of Theorem \ref{theo:Cauchy MS orthogonal}. There exists $\delta_s >0$ such that, if $(\tilde{\mathbf{c}},\tilde{\mathbf{U}})$ and $(\tilde{\mathbf{d}},\tilde{\mathbf{W}})$ are two solutions of \eqref{perturbed MS mass}--\eqref{perturbed MS momentum} having the same initial datum $(\tilde{\mathbf{c}}^\init,\tilde{\mathbf{U}}^\init)$, then $\tilde{\mathbf{c}}=\tilde{\mathbf{d}}$ and $\tilde{\mathbf{U}}=\tilde{\mathbf{W}}$.
\end{prop}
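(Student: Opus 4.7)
The plan is to derive a differential inequality for the squared $L^2_x$-norm of the difference $\boldf = \tilde{\mathbf{c}} - \tilde{\mathbf{d}}$ and to close it \emph{via} Gr\"onwall's lemma, following the same scheme as in Proposition \ref{prop:a priori MS}. Setting also $\boldg = \tilde{\mathbf{U}} - \tilde{\mathbf{W}}$, $\mathbf{c} = \bar{\mathbf{c}} + \eps\tilde{\mathbf{c}}$, $\mathbf{d} = \bar{\mathbf{c}}+\eps\tilde{\mathbf{d}}$, and subtracting the systems satisfied by the two solutions, I would work with
\begin{equation*}
\partial_t \boldf + \bar{\mathbf{c}}\nabla_x\cdot\pa{\VV - \mathbf{V}_{\tilde{W}}} + \bar{u}\cdot\nabla_x\boldf + \eps\nabla_x\cdot\pa{\tilde{\mathbf{c}}\VV-\tilde{\mathbf{d}}\mathbf{V}_{\tilde{W}}} = 0,
\end{equation*}
\begin{equation*}
\nabla_x \boldf = A(\mathbf{c})\boldg + \pab{A(\mathbf{c})-A(\mathbf{d})}\tilde{\mathbf{W}}.
\end{equation*}
Two preliminary observations are crucial: by Lemma \ref{lem:preservations}, both $\tilde{\mathbf{c}}$ and $\tilde{\mathbf{d}}$ satisfy the mass compatibility conditions, hence $\sum_i f_i = 0$ and $\int_{\T^3}\boldf\,\dd x = 0$, so Poincar\'e's inequality is available on $\boldf$; and $\boldg$ lies in $\pab{\Span(\mathbf{1})}^\perp$, so Proposition \ref{prop:A-1} can be invoked on it.

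The main step is to take the scalar product of the transport equation with $\boldf$ in $L^2_x\pab{\bar{\mathbf{c}}^{-1/2}}$ and integrate by parts exactly as in \eqref{a priori MS start} (at the level $|\alpha|=0$). The key dissipative contribution reads
\begin{equation*}
\int_{\T^3}\scalprod{\nabla_x\boldf,\boldg}\dd x = \int_{\T^3}\scalprod{A(\mathbf{c})\boldg,\boldg}\dd x + \int_{\T^3}\scalprodb{\pab{A(\mathbf{c})-A(\mathbf{d})}\tilde{\mathbf{W}},\boldg}\dd x,
\end{equation*}
and Proposition \ref{prop:spectral gap MS matrix} produces the negative feedback $-\lambda_A\pab{\min_i c_i}^2\|\boldg\|_{L^2_x}^2$. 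The remainder term is linear in $\eps\boldf$ thanks to the identity $c_ic_j - d_id_j = \eps\cro{c_i(\tilde{c}_j-\tilde{d}_j) + d_j(\tilde{c}_i-\tilde{d}_i)}$, so it is bounded by $C\eps\|\boldf\|_{L^2_x}\|\tilde{\mathbf{W}}\|_{L^\infty_x}\|\boldg\|_{L^2_x}$. The remaining contributions are dealt with by expanding $\VV - \mathbf{V}_{\tilde{W}}$ and $\tilde{\mathbf{c}}\VV - \tilde{\mathbf{d}}\mathbf{V}_{\tilde{W}}$ as linear combinations of $\boldf$ and $\boldg$ with coefficients depending on $\tilde{\mathbf{c}},\tilde{\mathbf{d}},\tilde{\mathbf{U}},\tilde{\mathbf{W}},\bar{u}$, and controlling each through the Sobolev embedding $H^{s/2}_x\hookrightarrow L^\infty_x$ together with the smallness $\|\tilde{\mathbf{c}}\|_{H^s_x},\|\tilde{\mathbf{d}}\|_{H^s_x},\|\bar{u}\|_{H^s_x}\leq C\delta_s$ inherited from Proposition \ref{prop:a priori MS}.

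After absorbing every occurrence of $\|\boldg\|_{L^2_x}$ into the dissipative term \emph{via} Young's inequality (possibly shrinking $\delta_s$), and exploiting Poincar\'e's inequality applied to $\boldf$, I arrive at an inequality of the form
\begin{equation*}
\frac{\dd}{\dd t}\|\boldf\|_{L^2_x(\bar{\mathbf{c}}^{-1/2})}^2 \leq C_s' \, \|\boldf\|_{L^2_x(\bar{\mathbf{c}}^{-1/2})}^2,
\end{equation*}
for some finite $C_s'>0$. Since $\boldf$ vanishes at time zero, Gr\"onwall's lemma forces $\boldf \equiv 0$ on $\R_+\times\T^3$; the gradient relation then reduces to $A(\mathbf{c})\boldg = 0$ with $\boldg\in\pab{\Span(\mathbf{1})}^\perp$, so Proposition \ref{prop:A-1} yields $\boldg\equiv 0$. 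The main technical obstacle is the bookkeeping of the nonlinear error terms generated by the nonlocal coefficient $\langle\mathbf{c},\tilde{\mathbf{U}}\rangle/\langle\mathbf{c},\mathbf{1}\rangle$ appearing in $\VV-\mathbf{V}_{\tilde{W}}$: each piece must be shown to carry either an $\eps$ factor or a factor of order $\delta_s$, so that the smallness of $\delta_s$ closes the estimate without relying on any $\eps$-smallness.
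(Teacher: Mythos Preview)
Your proposal is correct and follows essentially the same route as the paper: subtract the two systems, test the difference equation against $\bar{\mathbf{c}}^{-1}\boldf$ in $L^2_x$, exploit the spectral gap of $A(\mathbf{c})$ on $\boldg$, linearise $A(\mathbf{c})-A(\mathbf{d})$ via the identity $c_ic_j-d_id_j=\eps\,O(\boldf)$, and control the remaining terms through the Sobolev embedding and the smallness $\delta_s$. The only cosmetic difference is in the closing step: the paper converts every remainder into $\delta_s K(\delta_s)\|\tilde{\mathbf{R}}\|_{L^2_x}^2$ (using the gradient relation and Poincar\'e to trade $\|\nabla_x\tilde{\mathbf{h}}\|$ and $\|\tilde{\mathbf{h}}\|$ for $\|\tilde{\mathbf{R}}\|$), so that the right-hand side becomes $\leq 0$ for $\delta_s$ small and the norm is non-increasing, whereas you absorb the $\|\boldg\|$-terms via Young and finish with a Gr\"onwall inequality $\frac{\dd}{\dd t}\|\boldf\|^2\leq C_s'\|\boldf\|^2$. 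Both routes yield $\boldf\equiv 0$ from zero initial data and then $\boldg\equiv 0$ from $A(\mathbf{c})\boldg=0$ on $(\Span(\mathbf{1}))^\perp$.
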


\begin{proof}[Proof of Proposition \ref{lem:uniqueness MS}]
Subtracting the two sets of equations satisfied by $(\tilde{\mathbf{c}},\tilde{\mathbf{U}})$ and $(\tilde{\mathbf{d}},\tilde{\mathbf{W}})$, and denoting $\tilde{\mathbf{h}}=\tilde{\mathbf{c}}-\tilde{\mathbf{d}}$ and $\tilde{\mathbf{R}}=\tilde{\mathbf{U}}-\tilde{\mathbf{W}}$, we initially establish the relations
\begin{eqnarray}
&&\partial_t \tilde{\mathbf{h}} + \mathbf{\bar{c}}\nabla_x\cdot \mathbf{V}_{\tilde{R}} + \bar{u}\cdot \nabla_x\tilde{\mathbf{h}}+ \eps \nabla_x\cdot\pa{\tilde{\mathbf{h}}\VV} +\eps \nabla_x\cdot\pa{\tilde{\mathbf{d}}\mathbf{V}_{\tilde{R}}} = 0, \label{Uniqueness1}
\\[3mm]      &&  \nabla_x\tilde{\mathbf{h}} = A(\mathbf{c})\tilde{\mathbf{R}} + \cro{A(\mathbf{c})-A(\mathbf{d})}\tilde{\mathbf{W}}, \label{Uniqueness2}
\end{eqnarray}
with an obvious meaning for the shorthand $\mathbf{V}_{\tilde{R}}$.

We shall give similar computations to the ones derived for the \textit{a priori} estimates, except that we here restrict our investigation to the sole $L^2_x$ setting, since it will prove itself to be sufficient in order to deduce uniqueness. However, we still need the solutions to be in $H^s_x$ for some $s > 3$, in order to again take advantage of the Sobolev embedding $H^{s/2}_x\hookrightarrow L^\infty_x$. We compute the scalar product between $\mathbf{\bar{c}}^{-1}\tilde{\mathbf{h}}$ and the equation $\eqref{Uniqueness1}$, and we integrate over the torus. As in the proof of Proposition \ref{prop:a priori MS}, we use the gradient equation $\eqref{Uniqueness2}$ and its orthogonal properties to recover
\begin{eqnarray*}
\frac{1}{2}\frac{\dd}{\dd t}\norm{\tilde{\mathbf{h}}}^2_{L^2_x\big(\mathbf{\bar{c}}^{-\frac{1}{2}}\big)} &\leq& \int_{\T^3}\langle A(\mathbf{c})\tilde{\mathbf{R}},\tilde{\mathbf{R}} \rangle \dd x + \int_{\T^3} \langle \cro{A(\mathbf{c})-A(\mathbf{d})}\tilde{\mathbf{W}},\tilde{\mathbf{R}} \rangle_{\mathbf{\bar{c}}^{-1}} \dd x
\\[1mm]    &\:& + \int_{\T^3} \langle \nabla_x\tilde{\mathbf{h}},\tilde{\mathbf{h}}\pa{\bar{\mathbf{u}} + \eps\mathbf{V}_{\tilde{R}}}\rangle_{\mathbf{\bar{c}}^{-1}}\dd x +\eps\int_{\T^3} \langle \nabla_x\tilde{\mathbf{h}},\tilde{\mathbf{d}}\mathbf{V}_{\tilde{R}}\rangle_{\mathbf{\bar{c}}^{-1}}\dd x.
\end{eqnarray*}
We use the spectral gap of $A(\mathbf{c})$ for the first term on the right-hand side, while the remaining terms are dealt with thanks to the \textit{a priori} estimates derived in Proposition \ref{prop:a priori MS} and the usual Sobolev embedding, in the following way:
$$ \abs{\tilde{\mathbf{c}}(t,x)} \leq \norm{\tilde{\mathbf{c}}}_{H^s_x}\leq  \delta_s, \quad \abs{\bar{u}(t,x)} \leq \delta_s, \quad \big|\tilde{\mathbf{d}}(t,x) \big | \leq \delta_s, \quad \textit{on }\  \R_+\times\T^3. $$
This initially gives
\begin{equation}\label{uniqueness start}
\begin{split}
\frac{1}{2}\frac{\dd}{\dd t}\norm{\tilde{\mathbf{h}}}^2_{L^2_x\big(\mathbf{\bar{c}}^{-\frac{1}{2}}\big)}  \leq& -\frac{\lambda_A}{2}\norm{\tilde{\mathbf{R}}}^2_{L^2_x} + \frac{\sum\limits_{1\leq i,j \leq N} \disp{\int_{\T^3}} \abs{\tilde{R}_i} \abs{c_i c_j-d_i d_j}\abs{\tilde{W}_j-\tilde{W}_i} \:\dd x}{\min\limits_{1\leq i \leq N}\bar{c}_i \min\limits_{1\leq i,j\leq N}\Delta_{ij}}\hspace{2.8cm}
\\[1mm]     &   +\cro{\delta_s + \eps\delta_s\pa{1+\frac{N\delta_s^2}{C_0}}}\norm{\tilde{\mathbf{h}}}_{L^2_x}\norm{\nabla_x\tilde{\mathbf{h}}}_{L^2_x} + \eps\delta_s\norm{\nabla_x\tilde{\mathbf{h}}}_{L^2_x}\norm{\tilde{\mathbf{R}}}_{L^2_x}.
\end{split}
\end{equation}
Then, the algebraic manipulation
$$\abs{c_i c_j-d_i d_j} = \abs{\frac{1}{2}\pa{c_i-d_i}\pa{c_j+d_j} + \frac{1}{2}\pa{c_i+d_i}\pa{c_j-d_j}} \leq \eps\frac{\delta_s}{2}\pa{\abs{h_i} + \abs{h_j}}$$
and the Cauchy-Schwarz inequality yield the control
\begin{equation}\label{control sum uniqueness}
\sum\limits_{1\leq i,j \leq N} \int_{\T^3} \abs{\tilde{R}_i} \abs{c_i c_j-d_i d_j}\abs{\tilde{W}_j-\tilde{W}_i} \:\dd x \leq 2\eps \delta_s^2 \norm{\tilde{\mathbf{R}}}_{L^2_x}\norm{\tilde{\mathbf{h}}}_{L^2_x}.
\end{equation}
From the gradient relation $\eqref{Uniqueness2}$ and from the Poincar\'e inequality $\eqref{poincare}$, we also deduce the existence of a constant $C_s >0$ such that
\begin{equation}\label{control R h}
\norm{\tilde{\mathbf{R}}}_{L^2_x} \geq C_s\norm{\nabla_x\tilde{\mathbf{h}}}-\eps\delta_s^2\norm{\tilde{\mathbf{h}}}_{L^2_x} \geq \pa{C_s-\eps\delta_s^2}\norm{\tilde{\mathbf{h}}}_{L^2_x}.
\end{equation}
We now use $\eqref{control sum uniqueness}$, $\eqref{control R h}$ and the fact that $0< \eps \leq 1$ inside $\eqref{uniqueness start}$ to finally infer the upper bound
$$\frac{1}{2}\frac{\dd}{\dd t}\norm{\tilde{\mathbf{h}}}^2_{L^2_x\big(\mathbf{\bar{c}}^{-\frac{1}{2}}\big)} \leq \pa{-\frac{\lambda_A}{2}+\delta_s K(\delta_s)} \norm{\tilde{\mathbf{R}}}^2_{L^2_x}$$
where $K(\delta_s)>0$ is a polynomial in $\delta_s$ whose coefficients only depend on $\mathbf{\bar{c}}$ and on the number of species $N$. By choosing $\delta_s$ small enough so that both Proposition \ref{prop:a priori MS} holds and the inequality $-\frac{\lambda_A}{2}+\delta_s K(\delta_s) \leq 0$ is satisfied, we conclude that $\Vert \tilde{\mathbf{h}}\Vert_{L^2_x\big(\mathbf{\bar{c}}^{-\frac{1}{2}}\big)}$ decreases over time. Therefore, since initially $\tilde{\mathbf{h}}^\init=0$, we deduce that $\tilde{\mathbf{h}}=0$ at any time $t\geq 0$.
\par This implies that $\tilde{\mathbf{c}}=\tilde{\mathbf{d}}$, from which we also deduce that the gradient relation $\eqref{Uniqueness2}$ becomes
$$A(\mathbf{c})\tilde{\mathbf{R}} =0.$$
We thus infer that $\tilde{\mathbf{R}} = 0$, since $\tilde{\mathbf{R}}\in (\Span(\mathbf{1}))^\bot$. Consequently, $\tilde{\mathbf{U}} = \tilde{\mathbf{W}}$ and the uniqueness is established.
\end{proof}

\bigskip
\noindent \textbf{Step 5 -- Conclusion.} We are finally able to end our study of the incompressible Maxwell-Stefan system \eqref{MS mass vectorial}--\eqref{MS momentum vectorial}--\eqref{MS incompressibility vectorial}. Theorem \ref{theo:Cauchy MS orthogonal} is a direct gathering of Proposition \ref{prop:a priori MS}, Lemmata \ref{lem:preservations} and \ref{lem:positivity}, and Propositions \ref{lem:existence MS}--\ref{lem:uniqueness MS}.

\smallskip
Our main Theorem \ref{theo:Cauchy MS} then directly follows from Theorem \ref{theo:Cauchy MS orthogonal} with the unique orthogonal writing $\eqref{Condition2}$ established in Proposition \ref{prop:MS orthogonal writing}. In fact, as soon as the unique solution $(\bar{\mathbf{c}}+\eps\tilde{\mathbf{c}},\bar{\mathbf{u}}+\eps\tilde{\mathbf{U}})$ of the orthogonal system \eqref{perturbed MS mass}--\eqref{perturbed MS momentum} is established, the corresponding unique perturbative solution of the Maxwell-Stefan system \eqref{MS mass vectorial}--\eqref{MS momentum vectorial} with incompressibility condition $\eqref{MS incompressibility vectorial}$ is given by $(\bar{\mathbf{c}}+\eps\tilde{\mathbf{c}},\bar{\mathbf{u}}+\eps\tilde{\mathbf{u}})$, where
\begin{equation*}
\tilde{\mathbf{u}} = \tilde{\mathbf{U}} - \frac{1}{C_0}\langle \mathbf{c},\tilde{\mathbf{U}} \rangle\mathbf{1}
\end{equation*}
satisfies $\langle \mathbf{c},\tilde{\mathbf{u}} \rangle = 0$. In particular, the exponential decay of $\tilde{\mathbf{u}}$ directly follows from the exponential decays of $\tilde{\mathbf{c}}$ and $\tilde{\mathbf{U}}$.
\bigskip

\bibliographystyle{acm}
\bibliography{Bibliography_PerturbedIMS}

\begin{thebibliography}{10}

\bibitem{Ama}
{\sc Amann, H.}
\newblock {\em Nonhomogeneous linear and quasilinear elliptic and parabolic
  boundary value problems}.
\newblock In: Function spaces, differential operators and nonlinear analysis,
  pp. 9--126. H.J. Schmeisser, H. Triebel (eds), Teubner, Stuttgart, Leipzig,
  1993.

\bibitem{BonBri}
{\sc Bondesan, A., and Briant, M.}
\newblock Stability of the {M}axwell-{S}tefan system in the diffusion
  asymptotics of the {B}oltzmann multi-species equation.
\newblock {\em Comm. Math. Phys. 382}, 1 (2021), 381--440.

\bibitem{Bot}
{\sc Bothe, D.}
\newblock On the {M}axwell-{S}tefan approach to multicomponent diffusion.
\newblock In {\em Parabolic problems}, vol.~80 of {\em Progr. Nonlinear
  Differential Equations Appl.} Birkh\"{a}user/Springer Basel AG, Basel, 2011,
  pp.~81--93.

\bibitem{BouGotGre}
{\sc Boudin, L., G\"{o}tz, D., and Grec, B.}
\newblock Diffusion models of multicomponent mixtures in the lung.
\newblock In {\em C{EMRACS} 2009: {M}athematical modelling in medicine},
  vol.~30 of {\em ESAIM Proc.} EDP Sci., Les Ulis, 2010, pp.~90--103.

\bibitem{BouGrePav}
{\sc Boudin, L., Grec, B., and Pavan, V.}
\newblock The {M}axwell-{S}tefan diffusion limit for a kinetic model of
  mixtures with general cross sections.
\newblock {\em Nonlinear Anal. 159\/} (2017), 40--61.

\bibitem{BouGreSal1}
{\sc Boudin, L., Grec, B., and Salvarani, F.}
\newblock A mathematical and numerical analysis of the {M}axwell-{S}tefan
  diffusion equations.
\newblock {\em Discrete Contin. Dyn. Syst. Ser. B 17}, 5 (2012), 1427--1440.

\bibitem{BouGreSal2}
{\sc Boudin, L., Grec, B., and Salvarani, F.}
\newblock The {M}axwell-{S}tefan diffusion limit for a kinetic model of
  mixtures.
\newblock {\em Acta Applicandae Mathematicae 136}, 1 (2015), 79--90.

\bibitem{BoyFab}
{\sc Boyer, F., and Fabrie, P.}
\newblock {\em Mathematical tools for the study of the incompressible
  {N}avier-{S}tokes equations and related models}, vol.~183 of {\em Applied
  Mathematical Sciences}.
\newblock Springer, New York, 2013.

\bibitem{Cha}
{\sc Chang, H.}
\newblock Multicomponent diffusion in the lung.
\newblock {\em Fed. proc. 39}, 10 (1980), 2759--2764.

\bibitem{ChaCow}
{\sc Chapman, S., and Cowling, T.~G.}
\newblock {\em The {M}athematical {T}heory of {N}on-uniform {G}ases}.
\newblock Cambridge University Press, Cambridge, 1970.

\bibitem{CheJun}
{\sc Chen, X., and J\"{u}ngel, A.}
\newblock Analysis of an incompressible {N}avier-{S}tokes-{M}axwell-{S}tefan
  system.
\newblock {\em Comm. Math. Phys. 340}, 2 (2015), 471--497.

\bibitem{DauJunTan}
{\sc Daus, E., J\"{u}ngel, A., and Tang, B.~Q.}
\newblock Exponential time decay of solutions to reaction-cross-diffusion
  systems of {M}axwell-{S}tefan type.
\newblock {\em Archive Rat. Mech. Anal. 235\/} (2020), 1059--1104.

\bibitem{DesLepMou}
{\sc Desvillettes, L., Lepoutre, T., and Moussa, A.}
\newblock Entropy, duality, and cross diffusion.
\newblock {\em SIAM J. Math. Anal. 46}, 1 (2014), 820--853.

\bibitem{Fic}
{\sc Fick, A.}
\newblock Ueber {D}iffusion.
\newblock {\em Ann. der Physik 170\/} (1855), 59--86.

\bibitem{Gio2}
{\sc Giovangigli, V.}
\newblock Mass conservation and singular multicomponent diffusion algorithms.
\newblock {\em IMPACT Comput. Sci. Eng. 2\/} (1990), 73--97.

\bibitem{Gio1}
{\sc Giovangigli, V.}
\newblock {\em Multicomponent flow modeling}.
\newblock Modeling and Simulation in Science, Engineering and Technology.
  Birkh\"{a}user Boston, Inc., Boston, MA, 1999.

\bibitem{GioMas}
{\sc Giovangigli, V., and Massot, M.}
\newblock Les mélanges gazeux réactifs, ({I}) {S}ymétrisation et existence
  locale.
\newblock {\em C. R. Acad. Sci. Paris 323\/} (1996), 1153--1158.

\bibitem{HMPW17}
{\sc Herberg, M., Meyries, M., Pr\"{u}ss, J., and Wilke, M.}
\newblock Reaction-diffusion systems of {M}axwell-{S}tefan type with reversible
  mass-action kinetics.
\newblock {\em Nonlinear Anal. 159\/} (2017), 264--284.

\bibitem{HutSal1}
{\sc Hutridurga, H., and Salvarani, F.}
\newblock On the {M}axwell-{S}tefan diffusion limit for a mixture of monatomic
  gases.
\newblock {\em Math. Meth. in Appl. Sci. 40}, 3 (2017), 803--813.

\bibitem{HutSal2}
{\sc Hutridurga, H., and Salvarani, F.}
\newblock Existence and uniqueness analysis of a non-isothermal cross-diffusion
  system of {M}axwell-{S}tefan type.
\newblock {\em Appl. Math. Lett. 75\/} (2018), 108--113.

\bibitem{Jun}
{\sc J\"{u}ngel, A.}
\newblock The boundedness-by-entropy method for cross-diffusion systems.
\newblock {\em Nonlinearity 28}, 6 (2015), 1963--2001.

\bibitem{JunSte}
{\sc J\"{u}ngel, A., and Stelzer, I.~V.}
\newblock Existence analysis of {M}axwell-{S}tefan systems for multicomponent
  mixtures.
\newblock {\em SIAM J. Math. Anal. 45}, 4 (2013), 2421--2440.

\bibitem{Kawashima84}
{\sc Kawashima, S.}
\newblock Systems of a hyperbolic-parabolic composite type, with applications
  to the equations of magnetohydrodynamics.
\newblock {\em Doctoral Thesis, Kyoto Univ.\/} (1984).

\bibitem{KriWes}
{\sc Krishna, R., and Wesselingh, J.~A.}
\newblock The {M}axwell-{S}tefan approach to mass transfer.
\newblock {\em Chem. Eng. Sci. 52\/} (1997), 861--911.

\bibitem{LouMar}
{\sc Lou, Y., and Mart\'{\i}nez, S.}
\newblock Evolution of cross-diffusion and self-diffusion.
\newblock {\em J. Biol. Dyn. 3}, 4 (2009), 410--429.

\bibitem{LouNi}
{\sc Lou, Y., and Ni, W.-M.}
\newblock Diffusion, self-diffusion and cross-diffusion.
\newblock {\em J. Differential Equations 131}, 1 (1996), 79--131.

\bibitem{MajBer}
{\sc Majda, A.~J., and Bertozzi, A.~L.}
\newblock {\em Vorticity and incompressible flow}, vol.~27 of {\em Cambridge
  Texts in Applied Mathematics}.
\newblock Cambridge University Press, Cambridge, 2002.

\bibitem{MarTem}
{\sc Marion, M., and Temam, R.}
\newblock Global existence for fully nonlinear reaction-diffusion systems
  describing multicomponent reactive flows.
\newblock {\em J. Math. Pures Appl. 104\/} (2015), 102--138.

\bibitem{Max}
{\sc Maxwell, J.}
\newblock On the dynamical theory of gases.
\newblock {\em Phil. Trans. R. Soc. Lond. 157\/} (1867), 49--88.

\bibitem{ShiKawTer}
{\sc Shigesada, N., Kawasaki, K., and Teramoto, E.}
\newblock Spatial segregation of interacting species.
\newblock {\em J. Theoret. Biol. 79}, 1 (1979), 83--99.

\bibitem{Ste}
{\sc Stefan, J.}
\newblock {\"U}ber das gleichgewicht und die bewegung, insbesondere die
  diffusion von gasgemengen.
\newblock {\em Akad. Wiss. Wien 63\/} (1871), 63 -- 124.

\bibitem{TDBCH}
{\sc Thiriet, M., Douguet, D., Bonnet, J.-C., Canonne, C., and Hatzfeld, C.}
\newblock {The effect on gas mixing of a He-O2 mixture in chronic obstructive
  lung diseases}.
\newblock {\em Bull. Eur. Physiopathol. Respir. 15}, 5 (1979), 1053--1068.

\end{thebibliography}

\bigskip
\signandrea
\bigskip
\signmarc

\end{document}